\begin{document}
\vfuzz2pt 
\hfuzz2pt 
\newtheorem{thm}{Theorem}[]
\newtheorem{model}{Model}
\newtheorem{pro}{Problem}[]
\newtheorem{cor}[thm]{Corollary}
\newtheorem{lem}[]{Lemma}[section]
\newtheorem{prop}[]{Proposition}[section]
\theoremstyle{definition}
\newtheorem{defn}[thm]{Definition}
\theoremstyle{remark}
\newtheorem{rem}[]{Remark}[section]
\numberwithin{equation}{section}
\newtheorem{col}{Conclusion}

\baselineskip 16pt

\title[]{\bf Global non-isentropic rotational supersonic flows in a semi-infinite divergent duct}%
\author[]{Geng Lai}%
\address{}%
\email{}%

\subjclass{}%
\keywords{}%

\dedicatory{
Department of Mathematics, Shanghai University,
Shanghai, 200444, P.R. China\\ \vskip 4pt laigeng@shu.edu.cn}%

\subjclass{}%
\keywords{}%


\begin{abstract}
Supersonic flows for the two-dimensional (2D) steady full Euler system are studied.
We construct a global non-isentropic rotational supersonic flow in a
 semi-infinite divergent duct. The flow satisfies the slip condition on the walls of the duct, and the state of the flow is given at the inlet of the duct.
The solution is constructed by the method of characteristics.
The main difficulty for the global existence is that uniform a priori $C^1$ norm estimate of the solution is hard to obtain, especially when the solution tends to vacuum state.
We derive a group of characteristic decompositions for the 2D steady full Euler system. Using these decompositions, we obtain uniform a priori estimates for the derivatives of the solution. A sufficient condition for the appearance of vacuum is also given.
We show that if there is a vacuum then the vacuum is always adjacent to one of the walls, and the interface between gas and vacuum must be straight. The method used here may be also used to construct other 2D steady non-isentropic rotational supersonic flows.

\
\vskip 4pt
\noindent%
{\sc Keywords.}  2D steady full Euler system, characteristic decomposition, supersonic flow, vacuum.
\
\vskip 4pt
\noindent%
{\sc 2010 AMS subject classification.} Primary: 35L65; Secondary: 35L60, 35L67.
\end{abstract}

\maketitle

\section{\bf Introduction }

We consider the 2D steady compressible Euler system:
\begin{equation}
\left\{
  \begin{array}{ll}
    (\rho u)_x+(\rho v)_y=0, \\[4pt]
  (\rho u^2+p)_x+(\rho uv)_y=0,  \\[4pt]
   (\rho uv)_x+(\rho v^2+p)_y=0,\\[4pt]
(\rho uE +up)_x+(\rho vE+vp)_y=0,
  \end{array}
\right.\label{PSEU}
\end{equation}
where $(u, v)$ is the velocity, $\rho$ is the density, $p$ is the pressure, $E=\frac{u^2+v^2}{2}+e$ is the specific total energy, and $e$ is the specific internal energy.
For polytropic gases, we have the equations of state
$$
p=s\rho^{\gamma}\quad \mbox{and}\quad e=\frac{p\tau}{\gamma-1},
$$
where $s$ is the specific entropy and $\gamma>1$ is the adiabatic constant.

For smooth flow, system (\ref{PSEU}) can be written as
\begin{equation}
\left\{
  \begin{array}{ll}
    (\rho u)_x+(\rho v)_y=0, \\[4pt]
  uu_x+vu_y+\tau p_{x}=0,  \\[4pt]
    uv_x+vv_y+\tau p_{y}=0,\\[4pt]
us_x+vs_y=0.
  \end{array}
\right.\label{PsEuler}
\end{equation}

The eigenvalues of  (\ref{PsEuler}) are determined
by
\begin{equation}
\Big(\lambda-\frac{v}{u}\Big)^2\big[(v-\lambda u)^{2}-c^{2}(1+\lambda^{2})\big]=0,\label{characteristice}
\end{equation}
which yields
\begin{equation}
\lambda=\lambda_{\pm}(u,v,c)=\frac{uv\pm
c\sqrt{u^{2}+v^{2}-c^{2}}}{u^{2}-c^{2}}\quad\mbox{and}\quad\lambda=\lambda_0=\frac{v}{u}.
\end{equation}
Here, $c=\sqrt{\gamma s \rho^{\gamma-1}}$ is the sound speed.
So, if and only if $u^{2}+v^{2}>c^{2}$  (supersonic) system (\ref{PsEuler}) is hyperbolic and has two families of wave characteristics
defined as the integral curves of
$$C_{\pm}:\quad \frac{{\rm d}y}{{\rm d}x}=\lambda_{\pm}.$$ The stream characteristics are defined as the integral curves $$C_{0}:\quad \frac{{\rm d}y}{{\rm d}x}=\frac{v}{u}.$$

\begin{figure}[htbp]
\begin{center}
\includegraphics[scale=0.34]{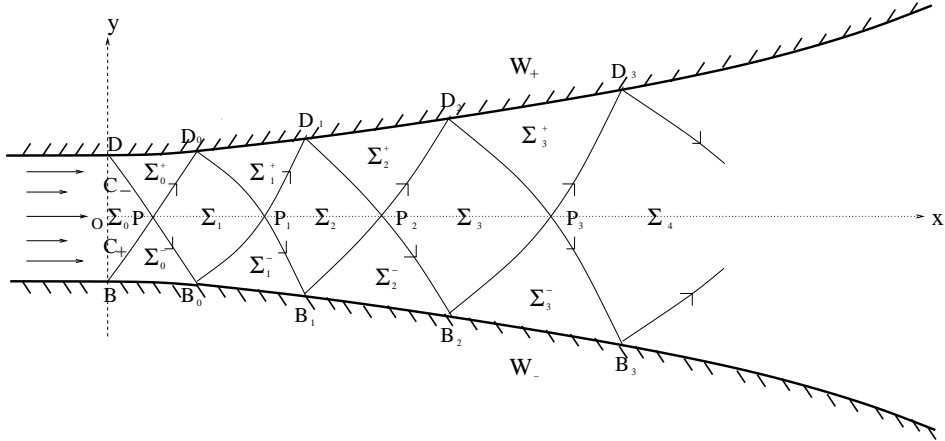}
\caption{ \footnotesize A piecewise smooth supersonic flow in a semi-infinite divergent duct.}
\label{Fig2}
\end{center}
\end{figure}

In this paper, we consider supersonic flows in a two-dimensional semi-infinite long divergent duct.  Assume that the duct denoted by $\Sigma$ is symmetric with respect to the x-axis and bounded by two walls $W_{+}$ and $W_{-}$ which are represented by
$$
W_{+}=\big\{(x, y) \mid y=f(x), ~~x\geq 0\big\}\quad \mbox{and}\quad  W_{-}=\big\{(x, y) \mid y=-f(x), ~~x\geq 0\big\},
$$
where $f(x)$ is assumed to satisfy:
\begin{equation}\label{72401}
 f(0)>0,\quad f'(0)= 0,\quad f''(x)> 0~~\mbox{as}~~x\geq 0,\quad
f_{\infty}'=\lim\limits_{x\rightarrow +\infty}f'(x)~~ \mbox{exists}.
\end{equation}
Then
$$
\Sigma=\{(x,y)\mid -f(x)<y<f(x), ~x>0\};
$$
see Figure \ref{Fig2}.
In order to study supersonic flows in such a duct,
we consider the following problem.
Assume that at the inlet of the duct, there is a supersonic incoming flow.
Then find a global supersonic flow in the duct $\Sigma$. 

When the incoming supersonic flow is a uniform flow, the global existence of continuous and piecewise smooth  solution in the duct was obtained by Chen and Qu in \cite{CQ2}.
\begin{thm} ({ Chen and Qu \cite{CQ2}})
Assume that the state of the incoming uniform supersonic flow is
$(u_0, 0, \rho_0, s_0)$ and that the duct is smooth and convex in the sense (\ref{72401}), where the constants $\rho_0>0$, $s_0>0$, and $u_0>c_0:=\sqrt{\gamma s_0\rho_0^{\gamma-1}}$.
Then there exists a global continuous and piecewise smooth solution in the duct.
Moreover, if $f_{\infty}'$ is greater than a constant determined by the Mach number $u_0/c_0$ and the adiabatic
exponent $\gamma$ of the incoming flow, then a vacuum will appear in the duct in finite
area. The vacuum region must be adjacent to the walls and the
boundary of the vacuum region is straight, which starts from and is tangential
to the curved part of the walls.
\end{thm}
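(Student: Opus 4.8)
The plan is first to exploit the special incoming data. Since the inlet flow is the uniform state $(u_0,0,\rho_0,s_0)$, the transport equation $us_x+vs_y=0$ in (\ref{PsEuler}) keeps $s\equiv s_0$ along every streamline, so the flow is globally isentropic; moreover the steady energy equation makes the total enthalpy $H=\frac{q^2}{2}+\frac{c^2}{\gamma-1}$ (with $q^2=u^2+v^2$) constant on each streamline, and uniform inlet data force the same $H$ everywhere, whence Crocco's relation gives zero vorticity. Thus the flow is a genuinely nonlinear potential flow: (\ref{PsEuler}) reduces to a $2\times2$ hyperbolic system for $(u,v)$, with $\rho,c$ recovered from $\frac{q^2}{2}+\frac{c^2}{\gamma-1}=\frac{u_0^2}{2}+\frac{c_0^2}{\gamma-1}$. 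Writing the flow angle $\theta=\arctan(v/u)$, the Mach angle $\omega$ with $\sin\omega=c/q$, and the Prandtl--Meyer function $\nu$, the families $C_\pm$ have inclinations $\theta\pm\omega$ and admit the \emph{exact} Riemann invariants $W_\pm:=\theta\mp\nu$, constant along $C_\pm$ respectively. By symmetry I would solve on the upper half $\{0\le y\le f(x)\}$ with $v=0$ on $y=0$ and the slip condition $\theta=\arctan f'(x)$ on $y=f(x)$.

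\textbf{Wave structure, $C^0$ bounds, and continuation.} Near the inlet the solution is the constant state in the triangle cut off by the leading characteristics; along each wall the convexity $f''>0$ generates an expansion simple wave in which the $C_+$ invariant keeps its inlet value $W_+\equiv-\nu_0$ (so the $C_-$ lines are straight and the state is constant along them), and these two expansion waves meet near the axis to form a central interaction region, which I would solve as a Goursat problem in characteristic coordinates, reflecting the invariants off the axis ($\theta=0$, i.e.\ $W_+=-W_-$) and off the walls ($\theta=\arctan f'(x)$). The $C^0$ bounds are then immediate: because $W_\pm$ are constant along their own characteristics and the region is swept by characteristics issued from the boundary, the ranges of $\theta$ and $\nu$ are pinned by $\nu_0$ and the wall angle $\arctan f'(x)\in[0,\arctan f'_\infty]$; in particular $q$ and $M$ increase, so hyperbolicity $M>1$ is preserved. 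The local solution then extends by the method of characteristics as long as a uniform $C^1$ bound keeps same-family characteristics from focusing into a shock.

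\textbf{The main estimate (the hard part).} The crux—and the step I expect to be the real obstacle—is this uniform $C^1$ bound. I would derive characteristic decompositions, i.e.\ second-order relations of the form $\bar\partial_-(\bar\partial_+W_-)=(\text{coeff})\,\bar\partial_+W_-$ and its mirror for $\bar\partial_-W_+$, governing the evolution of the transversal derivatives of the Riemann invariants along the opposite characteristics, where $\bar\partial_\pm=\partial_x+\lambda_\pm\partial_y$. Since the flow is expansive, the quadratic (Riccati) term carries the rarefactive sign, so these transversal derivatives stay bounded and same-family characteristics diverge rather than cross, giving the uniform $C^1$ bound and ruling out shocks. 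The genuine difficulty is keeping this estimate uniform \emph{up to vacuum}: as $c\to0$ one has $\omega\to0$, so $\lambda_+-\lambda_-\to0$ and the two characteristic families coalesce, i.e.\ strict hyperbolicity degenerates; the decompositions must therefore be arranged so that their controlling coefficients remain integrable as $c\to0$.

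\textbf{Appearance and shape of the vacuum.} Vacuum means $c=0$, equivalently $q=q_{\max}$ with $q_{\max}^2=u_0^2+\tfrac{2c_0^2}{\gamma-1}$, equivalently $\nu=\nu_{\max}=\frac{\pi}{2}\bigl(\sqrt{\tfrac{\gamma+1}{\gamma-1}}-1\bigr)$. In the wall simple wave $W_+=\theta-\nu\equiv-\nu_0$, so on the wall $\nu=\arctan f'(x)+\nu_0$; hence if $f'_\infty>\tan(\nu_{\max}-\nu_0)$—this is precisely the constant determined by $M_0=u_0/c_0$ and $\gamma$, with $\nu_0=\nu(M_0)$—there is a finite $x_*$ at which $\arctan f'(x_*)=\nu_{\max}-\nu_0$, where $\nu$ first reaches $\nu_{\max}$ and vacuum occurs. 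I would verify that $x_*$ lies in the simple-wave region (before reflected waves return to the wall), so the onset state propagates along a \emph{straight} $C_-$ characteristic carrying the constant vacuum state. Because $\omega\to0$ there, the inclination $\theta-\omega\to\arctan f'(x_*)$ of this line equals the wall tangent, so the gas--vacuum interface is the straight $C_-$ line tangent to the curved wall at $x_*$, and the vacuum lies against the wall, as claimed.
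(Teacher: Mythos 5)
Your overall strategy is the right one and is essentially the strategy of Chen--Qu in \cite{CQ2} (which this paper only cites, it does not reprove): reduce to an isentropic irrotational $2\times2$ system with Riemann invariants $\theta\mp\nu$, build the solution from the constant state, two wall simple waves, and an alternating sequence of Goursat and slip-boundary problems, and control everything through characteristic decompositions with a favorable Riccati sign. However, the proposal stops short at exactly the two places where the actual work lies. First, the uniform $C^1$ estimate up to vacuum: you correctly identify that the characteristic families coalesce as $c\to0$, but saying the decompositions ``must be arranged so that their coefficients remain integrable'' is not an argument. The decomposition for $\bar\partial_\mp(\bar\partial_\pm c)$ carries a factor $1/c$ in front of the quadratic term, and the bare transversal derivatives do not admit a two-sided bound near vacuum. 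The device that closes the estimate (both in \cite{CQ2} and, in the rotational generalization, in this paper's Lemmas 3.2--3.4) is to renormalize by a power of the sound speed: one proves a uniform \emph{negative upper} bound for $c^{-2\gamma/(\gamma-1)}\bar\partial_\pm c$ via an invariant-region/continuity argument on the decomposition for $c\bar\partial_\mp(c^{-\nu}\bar\partial_\pm c)$, and only then deduces the lower bound by monotonicity of $\bar\partial_\pm c$ along the opposite family. Without specifying this renormalization your ``main estimate'' is not established. Second, globality: a $C^1$ bound on each interaction region does not by itself show the semi-infinite duct is exhausted after finitely many Goursat and slip-boundary problems; one must rule out an infinite cascade of reflections. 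Chen--Qu do this with a hodograph transformation; this paper does it by showing the wave characteristics become convex ($\bar\partial_+\alpha>0$, with a quantitative upper bound on $\bar\partial_+\alpha$ in terms of $-\bar\partial_+c$) once $c$ is small, so that eventually the reflected characteristics fail to reach the opposite wall. Your proposal contains no substitute for this step.

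A smaller but genuine issue is the vacuum-onset argument. The identity $\nu=\nu_0+\arctan f'(x)$ on the wall holds only inside the wall simple wave; in general the reflected waves from the interaction region return to the wall before $\nu$ reaches $\nu_{\max}$, and there $W_+\neq-\nu_0$, so your threshold $f'_\infty>\tan(\nu_{\max}-\nu_0)$ is justified only under an unverified assumption that the vacuum point lies in the simple-wave region. The robust argument (Section 3.6 of this paper) is a differential inequality along the wall, $\bar\partial_0 c<\frac{q}{\kappa\cos A}\,\bar\partial_0\sigma$ combined with the slip condition $\sigma=-\arctan f'(x)$, integrated along the wall streamline; this yields a sufficient turning angle $\kappa\,c_0/u_0$ determined by $M_0$ and $\gamma$ without any simple-wave hypothesis. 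Your description of the interface itself (a straight characteristic tangent to the wall at the onset point, with vacuum between it and the wall) is correct and matches the limiting-characteristic argument in Section 3.6.
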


A similar global existence was obtained by Wang and Xin by using potential-stream coordinates in \cite{WX1}.
When the incoming flow is sonic, the system
 becomes degenerate
at the inlet of the duct.
In a recent paper \cite{WX2}, Wang and Xin solved this degenerate hyperbolic problem and constructed a smooth transonic flow solution in a De Laval nozzle.
The supersonic flow constructed in \cite{CQ2} is isentropic and irrotational.
So, a natural question is to determine whether this result can be extended to the 2D steady full Euler system.
For this purpose, we consider system (\ref{PSEU}) with the boundary condition:
\begin{equation}\label{bd1}
\left\{
  \begin{array}{ll}
    (u, v, \rho, s)(0, y)=(u_{in}, v_{in}, \rho_{in}, s_{in})(y), & \hbox{$-f(0)\leq  y\leq f(0)$;} \\[6pt]
   (u, v)\cdot {\bf n_w}=0, & \hbox{$W_{+}\cup W_{-}$,}
  \end{array}
\right.
\end{equation}
where ${\bf n_w}$ denotes the normal vector of $W_{\pm}$, and $(u_{in}, v_{in}, \rho_{in}, s_{in})(y)$ satisfies
\begin{description}
  \item[(A1)] $(u_{in}, v_{in}, \rho_{in}, s_{in})(y)\in C^{1}[-f(0), f(0)]$;
  \item[(A2)] $v_{in}(y)=0$, $p_{in}(y)=s_{in}(y)\rho_{in}^{\gamma}(y)=\mbox{Const.}$ and
 $u_{in}(y)>c_{in}(y):=\sqrt{\gamma s_{in}(y)\rho_{in}^{\gamma-1}(y)}$ as $-f(0)\leq  y\leq f(0)$;
  \item[(A3)] $(u_{in}, v_{in}, \rho_{in}, s_{in})(y)=(u_{in}, v_{in}, \rho_{in}, s_{in})(-y)$ as $-f(0)\leq  y\leq f(0)$.
\end{description}
Actually, $(u, v, \rho, s)(x, y)=(u_{in}, v_{in}, \rho_{in}, s_{in})(y)$ is a laminar flow solution of system (\ref{PSEU}) under assumptions (A1)--(A3) .

Referring to Figure \ref{Fig25},
if the incoming flow is a uniform supersonic flow $(u_0, 0, \rho_0, s_0)$, then by the results of Courant and Friedrichs (\cite{CF}, Chap. IV.B) we know that there is a simple wave $\it S_{+}$ ($\it S_{-}$, resp.) with straight $C_{+}$ ($C_{-}$, resp.) characteristics
issuing from the lower wall $W_{-}$ (upper wall $W_{+}$, resp.).
These two simple waves start to interact with each other from a point $\bar{P}=\big(f(0)\sqrt{ u_0^2-c_0^2}/c_0
, 0\big)$.
Through $\bar{P}$ we draw a forward $C_{-}$ ($C_{+}$, resp.) characteristic curve $C_{-}^{\bar{P}}$ ($C_{+}^{\bar{P}}$, resp.) in
$\it S_{+}$ ($\it S_{-}$, resp.).
Then there are the following two cases:
\begin{description}
  \item[(i)] The characteristic curve $C_{-}^{\bar{P}}$ ($C_{+}^{\bar{P}}$, resp.) meets the lower wall $W_{-}$ (upper wall $W_{+}$, resp.) at a point $\bar{B}_0$ ($\bar{D}_0$, resp.), as indicated in Figure \ref{Fig25}(right).
  \item[(ii)] The characteristic curve $C_{+}^{\bar{P}}$ ($C_{-}^{\bar{P}}$, resp.) does not meet the lower wall $W_{-}$ (upper wall $W_{+}$, resp.).
\end{description}

\begin{figure}[htbp]
\begin{center}
\includegraphics[scale=0.35]{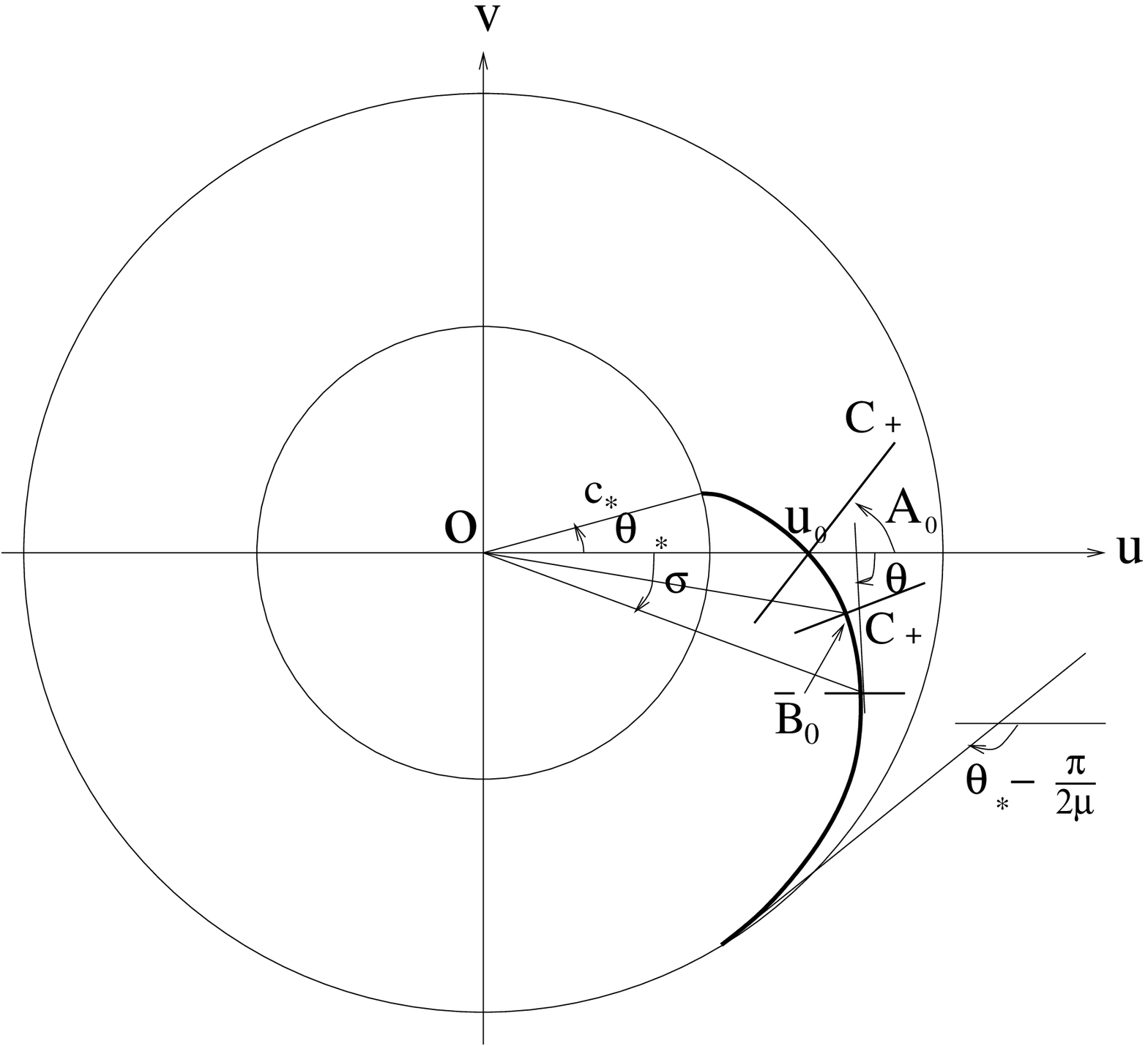} \qquad\qquad\includegraphics[scale=0.32]{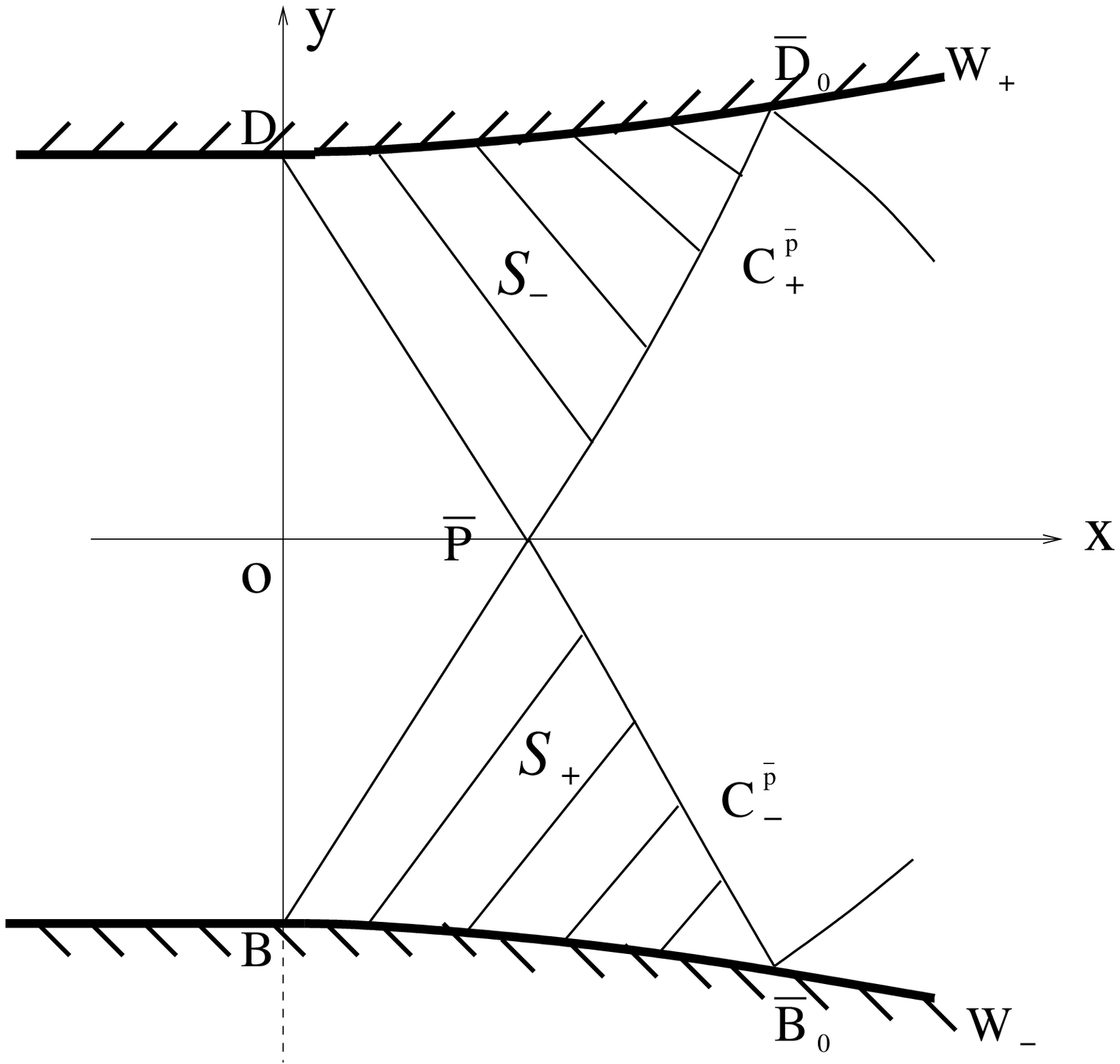}
\caption{ \footnotesize Simple waves adjacent to a constant state.}
\label{Fig25}
\end{center}
\end{figure}

In this paper, we show that for case (i), if the incoming flow is a small perturbation of the constant state $(u_0, 0, \rho_0, s_0)$ then the problem (\ref{PSEU}), (\ref{bd1}) admits a global continuous and piecewise smooth supersonic solution.
  Our main results can be stated as follows.
\begin{thm}\label{main}({\bf Main theorem})
Let $$\epsilon=\max\Big\{||u_{in}-u_0||_{_{C^1[-f(0), f(0)]}},~~
||\rho_{in}-\rho_0||_{_{C^1[-f(0), f(0)]}},~~||s_{in}-s_0||_{_{C^1[-f(0), f(0)]}}\Big\}.$$
Assume that $(u_{in}, v_{in}, \rho_{in}, s_{in})(y)$ satisfies (A1)--(A3) and the duct satisfies (\ref{72401}). For case (i), when $\epsilon$ is sufficiently small, the problem (\ref{PSEU}), (\ref{bd1})
admits a global continuous and piecewise smooth supersonic flow solution in the duct.
Moreover,
if $$\arctan f_{\infty}'>\frac{2}{\gamma-1}\cdot\frac{c_{in}\big(f(0)\big)}{ u_{in}\big(f(0)\big)}$$
then there are two vacuum regions adjacent to the walls and the interfaces between gas and vacuum are straight lines which start from and are tangential
to the walls.
\end{thm}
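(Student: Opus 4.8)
The plan is to build the solution from the inlet by the method of characteristics and to propagate it across the whole duct by a bootstrap (continuous-induction) argument driven by the characteristic decompositions. First I would introduce the physical-characteristic variables $\theta=\arctan(v/u)$, $q=\sqrt{u^{2}+v^{2}}$ and the entropy $s$, and recast (\ref{PsEuler}) as a system whose principal part is diagonalized by the Riemann-type functions $R_{\pm}=\theta\pm\Psi(q,s)$, where $\Psi$ is the generalized Prandtl--Meyer function; the fourth equation $us_{x}+vs_{y}=0$ says $s$ is constant along each stream characteristic $C_{0}$. Since $v_{in}=0$, the value of $s$ on each streamline is fixed by its inlet ordinate, so $s$ is an $O(\epsilon)$, $C^{1}$ function once the streamlines are controlled, and the non-isentropy enters the $R_{\pm}$-equations only through a source proportional to $s_{y}$ (the vorticity, by Crocco's relation). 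This source is exactly what distinguishes the present problem from the irrotational case of Chen--Qu, and it is what the group of characteristic decompositions is designed to absorb.

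Second, I would fix the solution's skeleton from the background. For the constant state $(u_{0},0,\rho_{0},s_{0})$ the two simple waves $S_{\pm}$ leave the walls and first meet at $\bar{P}$; in case (i) the characteristic $C_{-}^{\bar{P}}$ reaches the opposite wall, so the wave interaction is confined to a region compact in $x$ and the reflected waves return to the walls in a controlled pattern. For the perturbed data, local-in-$x$ existence follows from the standard characteristic iteration, the slip condition $\theta=\arctan f'(x)$ providing the reflection law relating the incoming and outgoing Riemann invariants at each wall. The bootstrap then reads: assuming on a strip $\{0\le x\le X\}$ that the flow is supersonic and $\|(\theta,q,s)\|_{C^{1}}$ lies within a fixed multiple of $\epsilon$, re-derive the same bounds with a strictly smaller constant, so that $X$ may be advanced to $+\infty$.

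The crux, and the step I expect to be the main obstacle, is the uniform a priori $C^{1}$ estimate, specifically the control of the normalized derivatives $G_{\pm}=\bar\partial_{\pm}R_{\pm}$ as the flow expands toward vacuum, where $c\to0$, the Mach angle $\mu\to0$, and the coefficients degenerate. The characteristic decompositions have the schematic form $\bar\partial_{\mp}G_{\pm}=a\,G_{+}G_{-}+b\,G_{\pm}+(\text{entropy source})$, and the task is to show that along each characteristic $G_{\pm}$ obeys a Riccati/linear differential inequality whose coefficients are integrable in $x$. Here the hypotheses (\ref{72401}) do the decisive work: $f''>0$ forces the flow to be purely expanding, so characteristics of the same family spread rather than focus, which rules out gradient blow-up (shock formation); and the existence of $f_{\infty}'$ keeps the accumulated turning, hence the coefficients, integrable. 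Bernoulli's law along streamlines, $\tfrac{q^{2}}{2}+\tfrac{c^{2}}{\gamma-1}=\mathrm{const}$, caps the speed by $q_{\max}=\sqrt{q_{in}^{2}+2c_{in}^{2}/(\gamma-1)}$ and lets one work with variables that stay bounded even as $\rho\to0$, which closes the estimate and yields the global continuous, piecewise-smooth solution.

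For the vacuum statement I would follow the streamline lying on $W_{+}$, along which $s$ is constant and $\theta=\arctan f'(x)$ climbs monotonically toward $\arctan f_{\infty}'$. The maximal turning the flow can perform before its wall state reaches vacuum equals the Prandtl--Meyer deficit $\nu_{\max}-\nu(M_{in})=\int_{M_{in}}^{\infty}\frac{\sqrt{m^{2}-1}}{m\,(1+\frac{\gamma-1}{2}m^{2})}\,\mathrm{d}m$, where $M=q/c$ and $M_{in}=u_{in}(f(0))/c_{in}(f(0))$. Bounding the integrand pointwise by $\frac{2}{(\gamma-1)m^{2}}$ (which amounts to $\sqrt{m^{2}-1}<m<m+\frac{2}{(\gamma-1)m}$) and integrating gives the strict estimate $\nu_{\max}-\nu(M_{in})<\frac{2}{\gamma-1}\cdot\frac{c_{in}(f(0))}{u_{in}(f(0))}$, the $O(\epsilon)$ entropy and interaction corrections being absorbed by the strict inequality. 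Hence, if $\arctan f_{\infty}'$ exceeds this number, the flow cannot remain attached: at the finite abscissa where the admissible turning is exhausted one has $\rho\to0$, and a vacuum opens. Its boundary is the limiting Mach line on which the gas state equals the constant vacuum-limit state, so it is straight; and because $c\to0$ there forces $\mu\to0$, its slope $\tan(\theta\pm\mu)$ tends to $\tan\theta$, the wall slope at the foot, so the line is tangent to the wall. The symmetry (A3) reproduces the mirror-image vacuum at $W_{-}$, giving the two claimed regions.
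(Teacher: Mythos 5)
Your outline captures the right skeleton (characteristics, Riccati-type decompositions, a Prandtl--Meyer-type turning bound for vacuum), but it leaves the two steps that actually carry the proof unresolved. First, the reduction to ``Riemann-type functions $R_{\pm}=\theta\pm\Psi(q,s)$'' does not diagonalize the non-isentropic rotational system: (\ref{PSEU}) is a genuinely coupled $4\times4$ system, which is precisely why the paper abandons Riemann invariants and instead works with the characteristic angles $\alpha,\beta$ and with \emph{first-order derivative} quantities $R_{\pm}=\bar{\partial}_{\pm}c-\frac{j\bar{\partial}_{\pm}s}{\kappa}$. More importantly, your treatment of the degeneracy at vacuum is where the argument would fail as written: saying that $f''>0$ makes same-family characteristics spread and that the Riccati coefficients are ``integrable in $x$'' does not control the factors $1/\cos^{2}A$ and the $1/c$ in $c\bar{\partial}_{\mp}R_{\pm}=\cdots$ as $c\to0$. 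The paper's mechanism is specific: (a) the exact transport laws $\bar{\partial}_{0}\bigl(\bar{\partial}_{+}s/c^{\frac{\gamma+1}{\gamma-1}}\bigr)=0$ and (\ref{72803}) for $\omega/\rho$, which show the entropy/vorticity sources vanish like $c^{\frac{2\gamma}{\gamma-1}}$ near vacuum; and (b) a two-regime normalization, proving a uniform negative upper bound for $R_{\pm}/c^{\nu_{1}}$ while $c\geq c_{m}$ and for $R_{\pm}/c^{\frac{2\gamma}{\gamma-1}}$ while $c<c_{m}$, the latter hinging on the sign condition (\ref{72602}) for $\frac{\gamma+1}{(\gamma-1)\cos^{2}A}-\frac{2\gamma}{\gamma-1}$. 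Without identifying these weights your bootstrap cannot close near the vacuum boundary. You also do not address why the construction terminates: the solution is assembled from Goursat and slip problems between successive wave reflections, and one must prove (as the paper does via the convexity estimates (\ref{41401})--(\ref{41402}) once $c<c_{g}$) that only finitely many reflections occur, so that $\Sigma$ is exhausted.

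The vacuum criterion also has a gap. Your Prandtl--Meyer computation $\int_{M_{in}}^{\infty}\frac{\sqrt{m^{2}-1}}{m(1+\frac{\gamma-1}{2}m^{2})}\,\mathrm{d}m<\frac{2}{(\gamma-1)M_{in}}$ is correct, but the identity ``maximal turning $=$ Prandtl--Meyer deficit'' presupposes the isentropic irrotational relation $\theta+\nu(M)=\mathrm{const}$ along the wall streamline, which fails for the rotational flow; the $O(\epsilon)$ corrections cannot simply be ``absorbed by the strict inequality'' without a quantitative one-sided bound. The paper instead derives directly on $W_{-}$ the differential inequality $\bar{\partial}_{0}c=\frac{1}{2\cos A}\bigl(\frac{2q\bar{\partial}_{0}\sigma}{\kappa}+2R_{+}\bigr)<\frac{u_{in}(f(0))}{\kappa}\bar{\partial}_{0}\sigma$, using the already-established sign $R_{+}<0$ and $q\geq u_{in}(f(0))$, and integrates it; this is the step your argument needs and does not supply. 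Finally, straightness and tangency of the gas--vacuum interface require showing that the $C_{+}$ characteristics issuing from wall points $(\tilde{x},-f(\tilde{x}))$ converge uniformly to the tangent line as $\tilde{x}\to x_{V}$, which the paper gets from the two-sided bound $0<\bar{\partial}_{+}\alpha<-\frac{3(\kappa-1)}{q_{m}}\bar{\partial}_{+}c$; the heuristic ``the slope tends to the wall slope because $A\to0$'' only controls the foot of the characteristic, not its global shape.
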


\begin{rem}
Actually, case (i) is vary easy to happen. In this case, we have the monotonicity conditions: $-\infty<\inf\limits_{\widehat{\bar{P}\bar{B}_0}}\bar{\partial}_{-}c<\sup\limits_{\widehat{\bar{P}\bar{B}_0}}\bar{\partial}_{-}c<0$ and $-\infty<\inf\limits_{\widehat{\bar{P}\bar{D}_0}}\bar{\partial}_{+}c
<\sup\limits_{\widehat{\bar{P}\bar{D}_0}}\bar{\partial}_{+}c<0$, where $\bar{\partial}_{\pm}$ are directional derivatives which will be defined later. These monotonicity conditions are crucial in constructing a global continuous and piecewise smooth solution to the problem (\ref{PSEU}), (\ref{bd1}) as $(u_{in}, v_{in}, \rho_{in}, s_{in})(y)$ is a small perturbation of the constant state $(u_0, 0, \rho_0, s_0)$.
\end{rem}

\begin{rem}
This theorem is also true if assumptions (A2) and (A3) are replaced by $v_{in}(f(0))=v_{in}(-f(0))=0$.
\end{rem}


We will use the method of characteristics,
so we need the concept of the direction of the wave characteristics.
The direction of the wave
characteristics is defined as the tangent direction that forms an
acute angle $A$ with the direction of the flow velocity
$(u, v)$. By simple computation, we see that the $C_+$
characteristic direction forms with the direction of the flow velocity
 the angle $A$ from $(u, v)$ to $C_{+}$ in
the counterclockwise direction, and the $C_-$ characteristic direction forms with the
 direction of the flow velocity the angle $A$ from $(u, v)$ to $C_{-}$
in the clockwise direction, as illustrated in Figure \ref{Fig1}.
 By computation, we have
\begin{equation}
c^{2}=q^{2}\sin^{2} A,\label{210cqo}
\end{equation}
in which $q^{2}=u^{2}+v^{2}$. The angle $A$ is called the
Mach angle.

\begin{figure}[htbp]
\begin{center}
\includegraphics[scale=0.53]{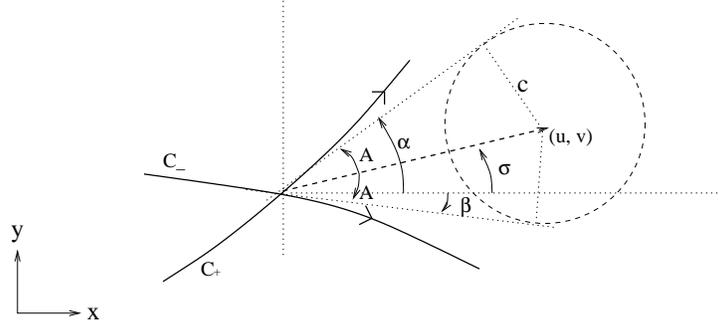}
\caption{ \footnotesize Characteristic curves, characteristic directions, and characteristic angles.}
\label{Fig1}
\end{center}
\end{figure}

Following \cite{CF} and \cite{Li3}, we use the concept of characteristic angle. The $C_{+}$ ($C_{-}$, resp.) characteristic angle is defined as the counterclockwise angle from the positive $x$-axis to the $C_{+}$ ($C_{-}$, resp.) characteristic direction.
We denote by $\alpha$ and
 $\beta$ the $C_{+}$ and $C_{-}$ characteristic angle,  respectively,
 where $0\leq \alpha-\beta\leq \pi$. Let $\sigma$ be the counterclockwise angle from the positive $x$-axis to the direction of the flow velocity. Obviously, we have
\begin{equation}
\alpha=\sigma+A,\quad \beta=\sigma-A,\quad\sigma=\frac{\alpha+\beta}{2},\quad A=\frac{\alpha-\beta}{2}, \label{tau}
\end{equation}
\begin{equation} \label{U}
u=q\cos\sigma, \quad v=q\sin\sigma,\quad u=c\frac{\cos\sigma}{\sin A},\quad \mbox{and}\quad v=c\frac{\sin\sigma}{\sin A}.
\end{equation}
By computation, we also have
\begin{equation}\label{4402}
q^{2}\cos\alpha\cos\beta=u^2-c^2\quad\mbox{and}\quad q^{2}\sin\alpha\sin\beta=v^2-c^2.
\end{equation}

Now, let us briefly describe the process of constructing a global continuous and piecewise smooth supersonic solution to the boundary value problem (\ref{PSEU}), (\ref{bd1}).

Referring to Figure \ref{Fig2},
 through point $B=(0, -f(0))$ draw a forward $C_{+}$ characteristic curve; through point $D=(0, f(0))$ draw a forward $C_{-}$ characteristic curve.
These two characteristic curves meet at some point $P$ on the $x-$axis. It can be seen by (A2) and (A3) that the flow in a region bounded by $\widehat{BP}$, $\widehat{DP}$, and $x=0$ is $(u, v, \rho, s)(x, y) =(u_{in}, v_{in}, \rho_{in}, s_{in})(y)$.
We solve a slip boundary problem for (\ref{PSEU}) in a region $\Sigma_{0}^{-}$ ($\Sigma_{0}^{+}$, resp.) bounded by $\widehat{BP}$ ($\widehat{DP}$, resp.), $W_{-}$ ($W_{+}$, resp.), and $\widehat{PB_0}$ ($\widehat{PD_0}$, resp.), where $\widehat{PB_0}$ ($\widehat{PD_0}$, resp.) is a $C_{-}$ characteristic curve which issues from $P$ and meets $W_{-}$ ($W_{+}$, resp.) at a point $B_0$ ($D_0$, resp.).
Meanwhile, in view of $f''>0$ and $\epsilon$ is sufficiently small we will get
$$
\sup\limits_{\widehat{PB_0}}\bar\partial_{-}c<0\quad\mbox{and}\quad \sup\limits_{\widehat{PD_0}}\bar\partial_{+}c<0.
$$
where the directional derivatives
\begin{equation}
\bar{\partial}_+=\cos\alpha\partial_{x}+\sin\alpha\partial_{y}\quad \mbox{and}\quad \bar{\partial}_{-}=\cos\beta\partial_{x}+\sin\beta\partial_{y}.
\end{equation}


We will then solve a Goursat problem for (\ref{PSEU}) in a region $\Sigma_1$
bounded by $\widehat{PD_0}$, $\widehat{PB_0}$, a forward $C_{-}$ characteristic curve $C_{-}^{D_0}$ issuing from $D_0$, and a forward $C_{+}$ characteristic curve $C_{+}^{B_0}$ issuing from $B_0$. There are two cases: one is that $C_{+}^{B_0}$ and $C_{-}^{D_0}$ intersect at some point $P_1$ as indicated in Figure \ref{Domain}(1); the other is that $C_{+}^{B_0}$ and $C_{-}^{D_0}$ do not intersect with each other as indicated in Figure \ref{Domain}(2).

\begin{figure}[htbp]
\begin{center}
\includegraphics[scale=0.296]{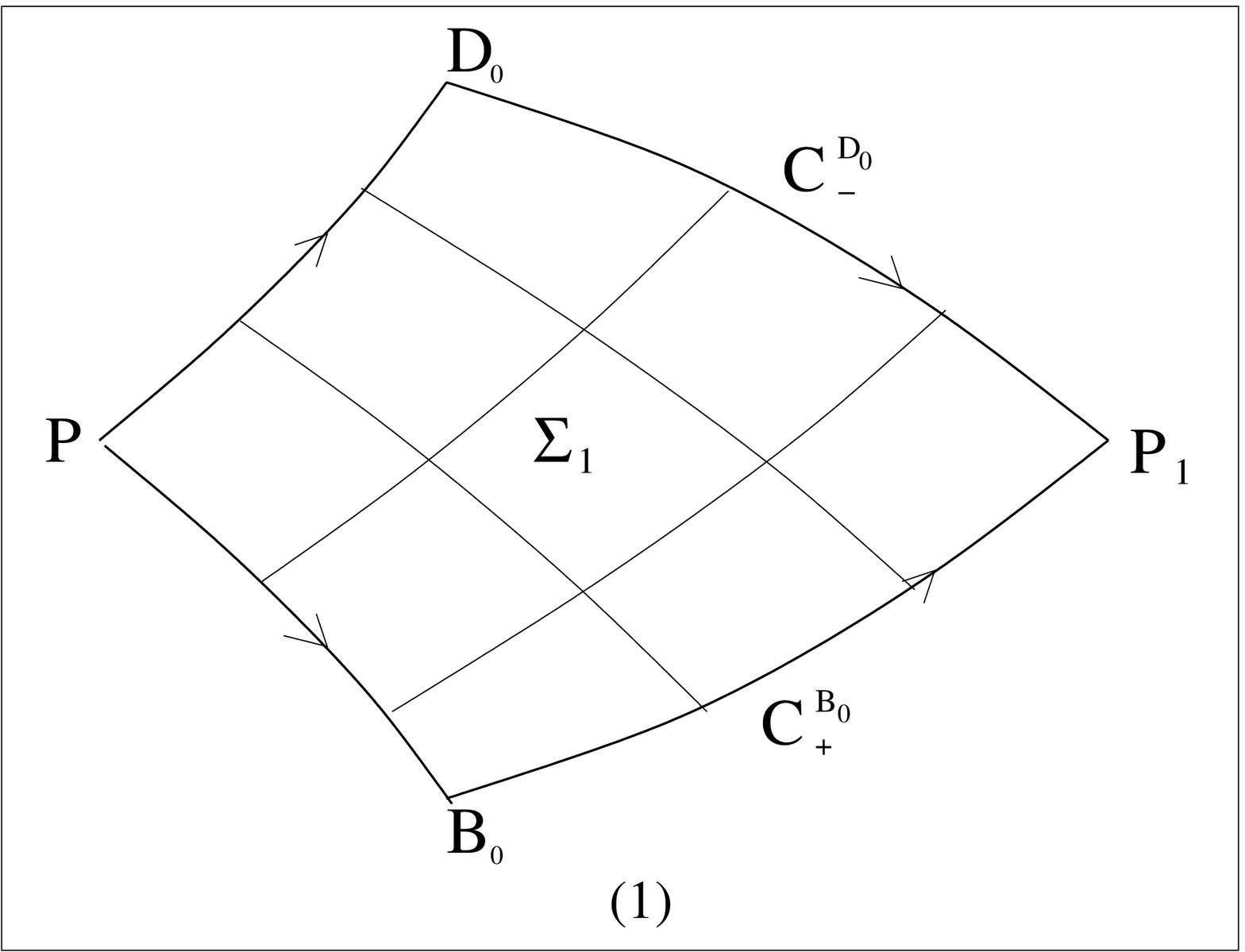}~~\includegraphics[scale=0.26]{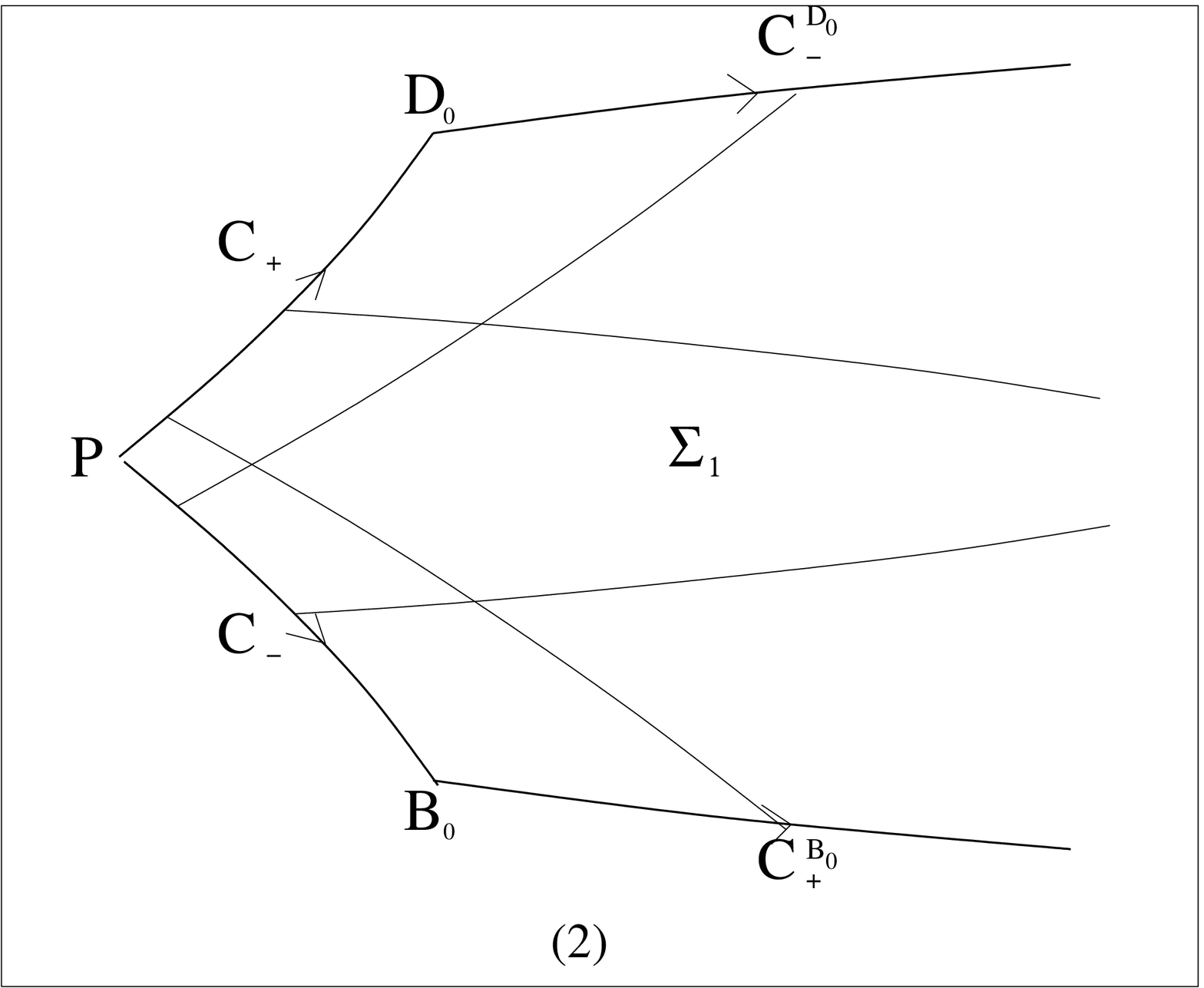}\\
\vskip 2pt
\includegraphics[scale=0.345]{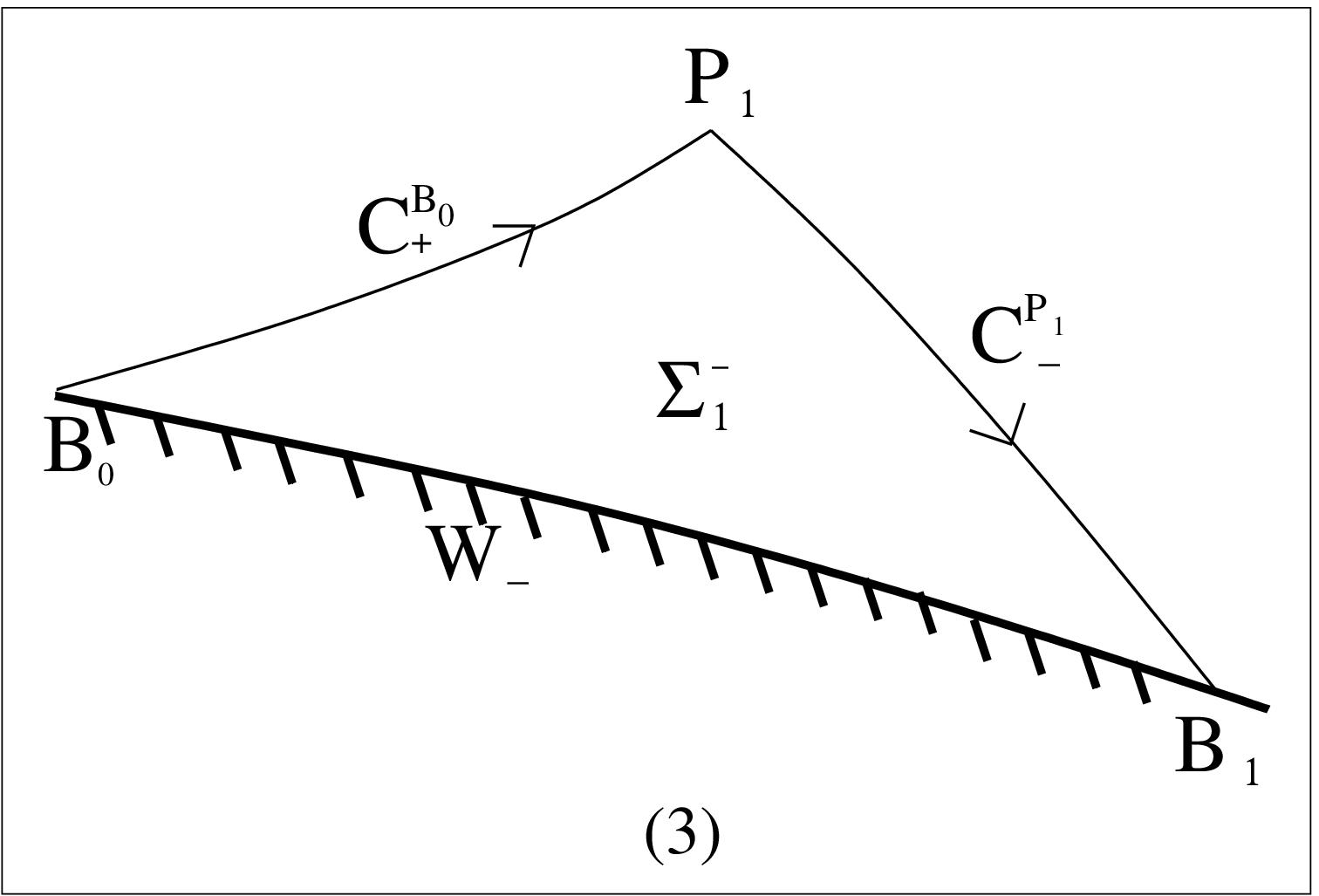}~ \includegraphics[scale=0.325]{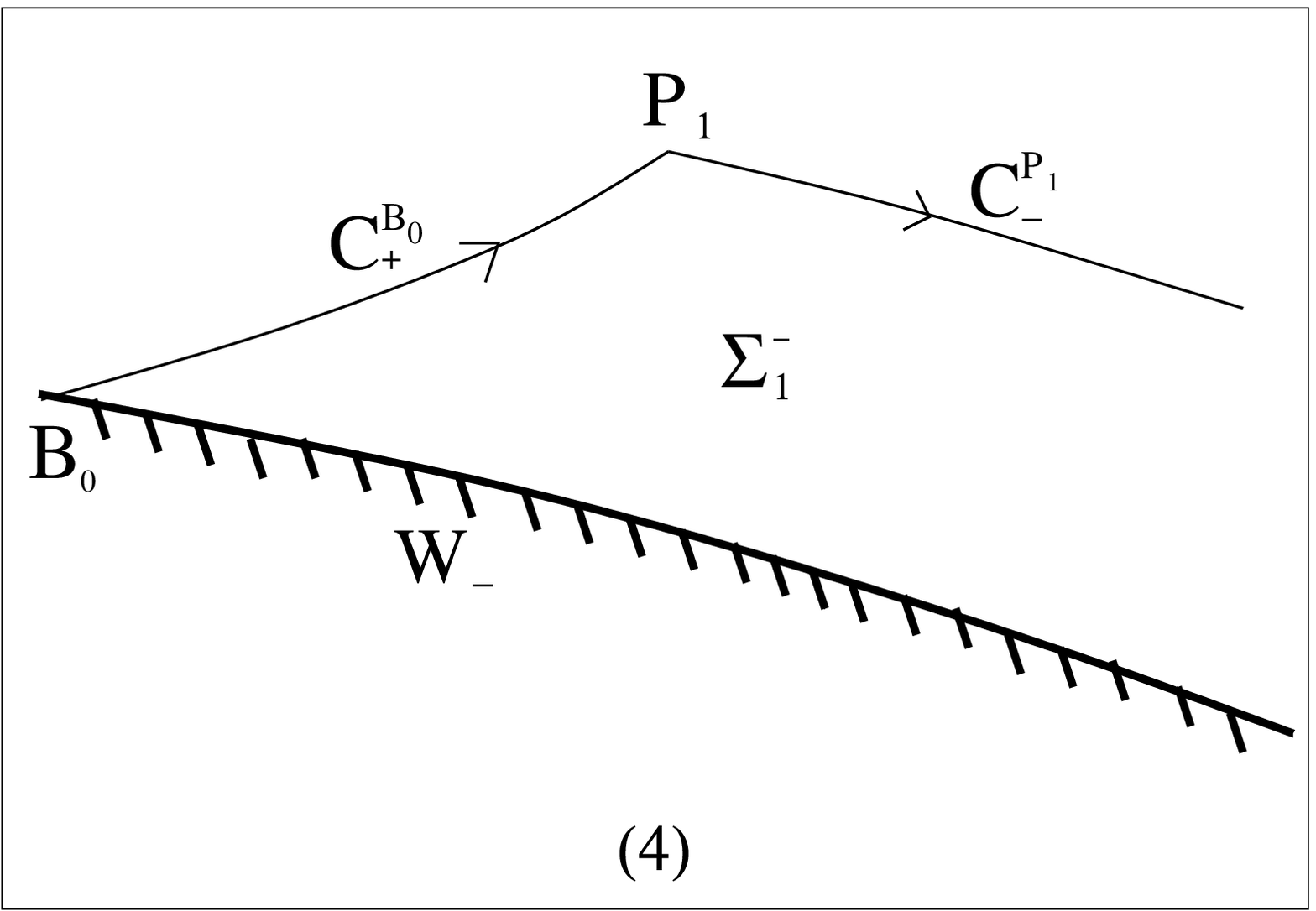}~\includegraphics[scale=0.36]{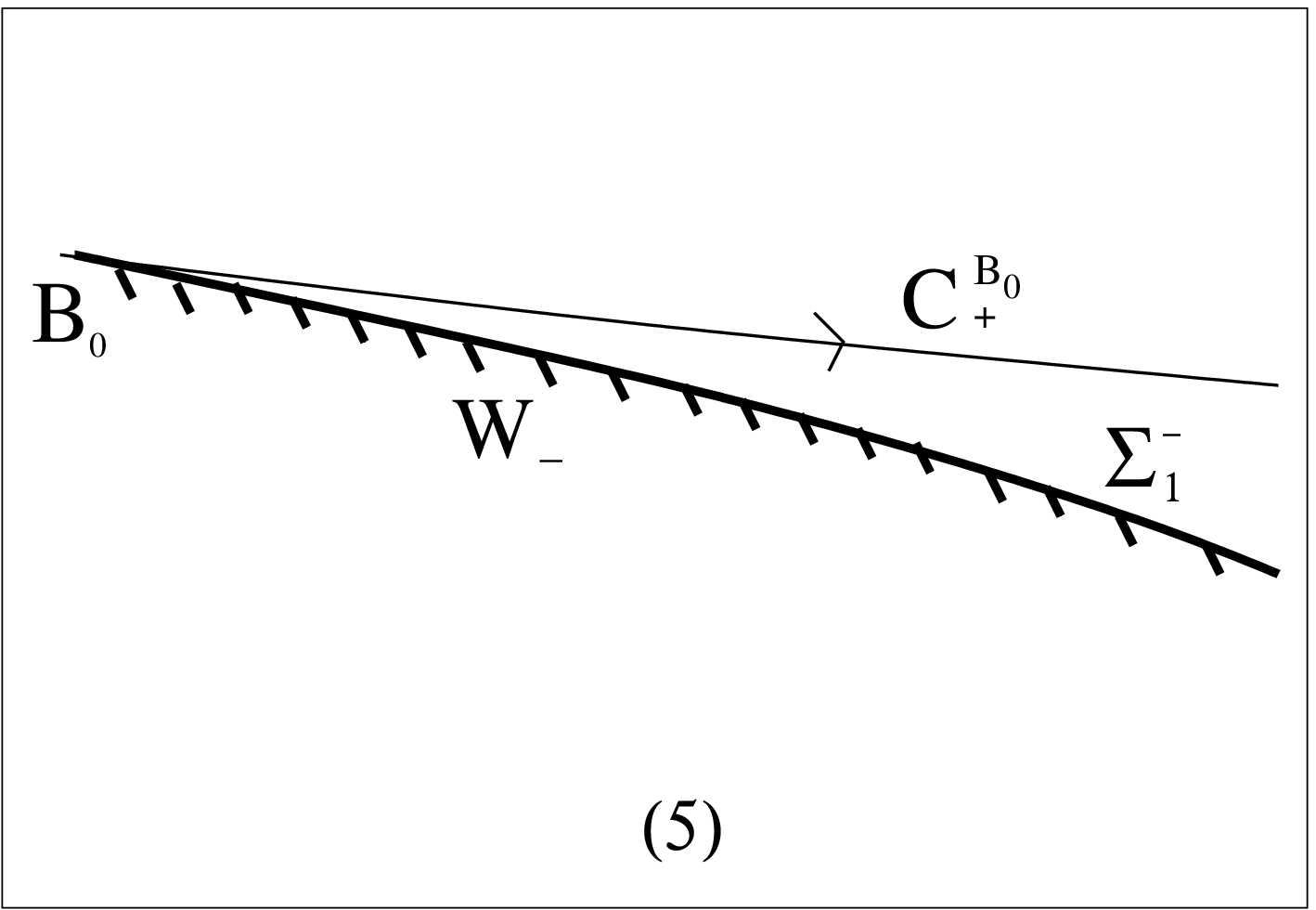}
\caption{ \footnotesize Domains $\Sigma_1$ and $\Sigma_1^{-}$.}
\label{Domain}
\end{center}
\end{figure}

Next, we solve a slip boundary problem for (\ref{PSEU}) in a region $\Sigma_1^{-}$ adjacent to $W_{-}$.  If $C_{+}^{B_0}$ and $C_{-}^{D_0}$ intersect at some point $P_1$, then there are two possibilities about $\Sigma_1^{-}$.
One is that the $C_{-}$ characteristic curve $C_{-}^{P_1}$ issuing from $P_1$ intersects with $W_{-}$ at a point $B_1$, and then $\Sigma_1^{-}$ is a bounded domain closed by $\widehat{B_0P_1}$, $\widehat{P_1B_1}$, and $W_{-}$; see Figure \ref{Domain}(3).
The other is that $C_{-}^{P_1}$ does not intersect with $W_{-}$, and then $\Sigma_1^{-}$ is an infinite region bounded by $\widehat{BP_1}$, $W_{-}$, and $C_{-}^{P_1}$; see Figure \ref{Domain}(4).
If $C_{+}^{B_0}$ and $C_{-}^{D_0}$ do not intersect with each other, then $\Sigma_{1}^{-}$ will be an infinite region between $C_{+}^{B_0}$ and $W_{-}$; see Figure \ref{Domain}(5).
By symmetry, one can obtain the flow in a region $\Sigma_1^{+}$  adjacent to $W_{+}$.

If $C_{+}^{B_0}$ and $C_{-}^{D_0}$ intersect at some point $P_1$ then we continue to solve a Goursat problem for (\ref{PSEU}) with $C_{-}^{P_1}$ and $C_{+}^{P_1}$ as the characteristic boundaries in a region $\Sigma_2$.
By repeatedly solving similar Goursat problems and slip boundary problems, one can get the solution
in regions $\Sigma_{0}$, $\Sigma_{0}^{+}$, $\Sigma_{0}^{-}$, $\Sigma_{1}$, $\Sigma_{1}^{+}$, $\Sigma_{1}^{-}$, $\Sigma_{2}$, $\Sigma_{2}^{+}$, $\Sigma_{2}^{-}$, $\cdot\cdot\cdot$, as illustrated in Figure \ref{Fig2}.
Finally, we shall show that one can obtain a global piecewise smooth supersonic flow solution in the domain $\Sigma$ after solving a finite number of Gourst problems and slip boundary problems.

One of the main difficulties to construct the global solution is that uniform a priori $C^1$ norm estimate of solution is hard to obtain, especially when the solution tends to vacuum state.
If the flow is isentropic and irrotational then system (\ref{PSEU}) can by using Riemann invariants be reduced to a $2\times 2$ system, and the existence of global classical solution to Goursat problem and slip boundary problem can be obtained  by monotonicity conditions of the boundary data; see Li \cite{LiT}. However, if the flow is non-isentropic and rotational then system (\ref{PSEU}) is a $4\times 4$ system and cannot be diagonalized.
Thus the methods in \cite{CQ2, LiT, WX1} do not work here.
In this paper,
we derive the following two important characteristic equations about the entropy and the vorticity $\omega=u_y-v_x$:
$$
\bar{\partial}_{0}\Big(\frac{\bar{\partial}_{\pm} s}{c^{\frac{\gamma+1}{\gamma-1}}}\Big)
~=~0\quad \mbox{and}\quad \bar{\partial}_{0}\Big(\frac{\omega}{\rho}\Big)
=-\Big(\frac{\bar{\partial}_{+}s}{c^{\frac{\gamma+1}{\gamma-1}}}\Big)\frac{(s\gamma)^{\frac{1}{\gamma-1}} }{\gamma(\gamma-1)s}\bar{\partial}_{0}c^2,
$$
where
\begin{equation}\label{72801}
\bar{\partial}_0=\cos\sigma\partial_{x}+\sin\sigma\partial_{y}.
\end{equation}
Using these equations, we can control the bounds of $\frac{\bar{\partial}_{\pm} s}{c^{\frac{\gamma+1}{\gamma-1}}}$ and $\frac{\omega}{\rho}$. (Remark: we shall show that although the solution is piecewise smooth in the duct, $\bar{\partial}_{\pm} s$ and $\omega$ are actually continuous in the duct.)
We also derive a group of characteristic decompositions (that can be actually seen as a system of ``ordinary differential equations") about $R_{+}$, $R_{-}$, $\frac{\bar{\partial}_{+} s}{c^{\frac{\gamma+1}{\gamma-1}}}$, and $\frac{\omega}{\rho}$, where
\begin{equation}\label{1973101}
R_{\pm}:=\bar{\partial}_{\pm}c-\frac{j\bar{\partial}_{\pm}s}{\kappa}=
\bar{\partial}_{\pm}c-\frac{1}{2\gamma s}\Big(\frac{\bar{\partial}_{\pm} s}{c^{\frac{\gamma+1}{\gamma-1}}}\Big)c^{\frac{2\gamma}{\gamma-1}}
,
\end{equation}
$$
\kappa=\frac{2}{\gamma-1},\quad\mbox{and} \quad j=\frac{c}{\gamma (\gamma-1)s};
$$
see (\ref{cd8}) and (\ref{cd10}).
We first use the characteristic decompositions (\ref{cd10}) to prove that $c^{-\frac{2\gamma}{\gamma-1}}R_{+}$ and $c^{-\frac{2\gamma}{\gamma-1}}R_{-}$
have an identical uniform negative upper bound as $\epsilon$ is sufficiently small,
and then use this negative upper bound and the characteristic decompositions (\ref{cd8}) to prove that $R_{+}$ ($R_{-}$, resp.) is monotone increasing along $C_{-}$ ($C_{+}$, resp.) characteristic curves in the sense of characteristic direction, and consequently get a uniform negative lower bound of $R_{+}$ and $R_{-}$.
The estimations of the derivatives of $u$ and $v$ can then be obtained by  (\ref{192303})--(\ref{82505}).

In order to prove that the global solution can be constructed after solving a finite number of Gourst problems and slip boundary problems, the method of hodograph transformation in \cite{CQ2} does not work here, since we do not have Riemann invariants for the 2D steady full Euler system. In this paper, we use characteristic angles $\alpha$ and $\beta$.
By the uniform negative upper bound of $c^{-\frac{2\gamma}{\gamma-1}}\bar{\partial}_{\pm}c$ and the characteristic equations about $\alpha$ and $\beta$ (see (\ref{192307}) and (\ref{192309})), we will see that the wave characteristic curves are convex as $c$ is sufficiently small.
Using the convexity of the wave characteristic curves, we will
prove that $\Sigma$ can be covered by a finite number of determinate regions of those Goursat problems and slip boundary value problems; we will also prove that if there is a vacuum then the vacuum is always adjacent to one of the walls and the interface between gas and vacuum must straight.

In our previous studies \cite{Lai5, Lai6}, we constructed several non-isentropic rotational supersonic flow solutions for the 2D (pseudo-)steady Euler equations. However, these solutions were constructed in bounded regions and under the assumption that $c$ has a non-zero lower bound. In the present paper, we overcome the difficulty caused by vacuum.

The rest of the paper is organized as follows. Section 2 is concerned with characteristic equations of the 2D steady full Euler system. In section 2.1, we derive a group of first order characteristic equations
for the variables $\alpha$, $\beta$, $c$, and $s$.
In section 2.2, we derive a group characteristic
equations about $R_{+}$, $R_{-}$, $c^{-\frac{\gamma+1}{\gamma-1}}\bar{\partial}_{+} s$, and $\frac{\omega}{\rho}$. Section 3 is devoted to construct a global supersonic flow solution to the boundary value problem (\ref{PSEU}), (\ref{bd1}).

\section{\bf Characteristic decompositions of the 2D steady Euler equations}

\subsection{Characteristic equations}
Set
\begin{equation}
\widehat{E}=\frac{q^{2}}{2}+\frac{c^2}{\gamma-1}.\label{Ber}
\end{equation}
Then by the last three equations of (\ref{PsEuler}) we have
\begin{equation}\label{4401}
u\widehat{E}_{x}+v\widehat{E}_{y}=0\quad \mbox{and}\quad \bar{\partial}_{0}\widehat{E}=0.
\end{equation}

From (\ref{Ber}) we also have
\begin{equation}\label{72301}
\rho_{x}=\frac{1}{c^2\tau}\Big(\widehat{E}_{x}-uu_{x}-vv_{x}-\frac{\gamma\rho^{\gamma-1}s_x}{\gamma-1}\Big)
\quad\mbox{and}\quad
\rho_{y}=\frac{1}{c^2\tau}\Big(\widehat{E}_{y}-uu_{y}-vv_{y}-\frac{\gamma\rho^{\gamma-1}s_y}{\gamma-1}\Big).
\end{equation}
Inserting this into the first equation of (\ref{PsEuler}) and using (\ref{4401}) and the forth equation of (\ref{PsEuler}), we get
\begin{equation}
  (c^{2}-u^{2})u_{x}-uv(u_{y}+v_{x})+(c^{2}-v^{2})v_{y}=0.\label{mass}
\end{equation}

Multiplying system
$$
\begin{array}{rcl}
\left(
 \begin{array}{cc}
c^{2}-u^{2} & -uv \\
  0 & -1\\
  \end{array}
  \right)\left(
           \begin{array}{c}
             u \\
             v \\
           \end{array}
         \right)_{x}+\left(
                         \begin{array}{ccccc}
                          -uv &  c^{2}-v^{2}\\
                           1 & 0 \\
                         \end{array}
                       \right)\left(
                                \begin{array}{c}
                                  u \\
                                  v \\
                                \end{array}
                              \right)_{y}=\left(
                                               \begin{array}{c}
                                                 0 \\
                                                 \omega \\
                                               \end{array}
                                             \right)
                                             \label{matrix1}
                                             \end{array}
$$ on the left
by
$(1,\mp c\sqrt{u^{2}+v^{2}-c^{2}})$ and using (\ref{4402}), we get
\begin{equation}
\left\{
  \begin{array}{ll}
  \displaystyle \bar{\partial}_{+}u+\lambda_{-}\bar{\partial}_{+}v
 =\frac{\omega\sin A\cos A}{\cos\beta},  \\[10pt]
    \displaystyle  \bar{\partial}_{-}u+\lambda_{+}\bar{\partial}_{-}v=-\frac{\omega\sin A\cos A}{\cos\alpha}.
  \end{array}
\right.\label{form}
\end{equation}

From (\ref{tau}) and (\ref{U}) we have
\begin{equation}\label{72701}
\bar{\partial}_{\pm}u=\frac{\cos\sigma}{\sin A}\bar{\partial}_{\pm}c+\frac{c\cos\alpha\bar{\partial}_{\pm}\beta
-c\cos\beta\bar{\partial}_{\pm}\alpha}{2\sin^{2}A}
\end{equation}
and
\begin{equation}\label{72702}
\bar{\partial}_{\pm}v=\frac{\sin\sigma}{\sin A}\bar{\partial}_{\pm}c
+\frac{c\sin\alpha\bar{\partial}_{\pm}\beta
-c\sin\beta\bar{\partial}_{\pm}\alpha}{2\sin^{2}A}.
\end{equation}

Inserting (\ref{72701}) and (\ref{72702}) into (\ref{form}), we obtain
\begin{equation}\label{3}
\bar{\partial}_{+}c=\frac{c}{\sin2A}
(\bar{\partial}_{+}\alpha-\cos2A\bar{\partial}_{+}\beta)+\omega\sin^{2}A
\end{equation}
and
\begin{equation}
\bar{\partial}_{-}c=\frac{c}{\sin2A}
(\cos2A\bar{\partial}_{-}\alpha-\bar{\partial}_{-}\beta)-\omega\sin^{2}A.\label{4}
\end{equation}

From the second and the third equations of (\ref{PsEuler}) we have
$$
\widehat{E}_x=-v\omega+\frac{\rho^{\gamma-1}}{\gamma-1}s_x\quad \mbox{and} \quad \widehat{E}_y=u\omega+\frac{\rho^{\gamma-1}}{\gamma-1}s_y.
$$
Hence, we have
\begin{equation}\label{726021}
u\bar{\partial}_{+}u+v\bar{\partial}_{+}v+\kappa c\bar{\partial}_{+}c=-v\omega\cos\alpha +u\omega\sin\alpha+\frac{\rho^{\gamma-1}}{\gamma-1}\bar{\partial}_{+}s
\end{equation}
and
\begin{equation}\label{41102}
u\bar{\partial}_{-}u+v\bar{\partial}_{-}v+\kappa c\bar{\partial}_{-}c=-v\omega\cos\beta +u\omega\sin\beta+\frac{\rho^{\gamma-1}}{\gamma-1}\bar{\partial}_{-}s.
\end{equation}

Inserting (\ref{72701})--(\ref{72702}) into (\ref{726021}) and (\ref{41102}), we get
\begin{equation}
\left(\frac{1}{\sin^{2}A}+\kappa\right)\bar{\partial}_{+}c=
\frac{c\cos A}{2\sin^{3}A}(\bar{\partial}_{+}\alpha-\bar{\partial}_{+}\beta)
+\omega+j\bar{\partial}_{+}s\label{bB1}
\end{equation}
and
\begin{equation}
\left(\frac{1}{\sin^{2}A}+\kappa \right)\bar{\partial}_{-}c=
\frac{c\cos A}{2\sin^{3}A}(\bar{\partial}_{-}\alpha-\bar{\partial}_{-}\beta)
-\omega+\bar{\partial}_{-}s,\label{bB2}
\end{equation}
respectively.

Inserting (\ref{3}) into (\ref{bB1}), we obtain
\begin{equation}\label{6}
c\bar{\partial}_{+}\alpha=\Omega\cos^{2}Ac\bar{\partial}_{+}\beta-\Big(\frac{\kappa\sin2A\sin^{2} A}{1+\kappa}\Big)\omega+\frac{j\sin 2A}{1+\kappa}\bar{\partial}_{+}s,
\end{equation}
where
$$
\Omega=\frac{\kappa-1}{\kappa+1}-\tan^2 A.
$$
Inserting (\ref{4}) into (\ref{bB2}), we obtain
\begin{equation}\label{5}
c\bar{\partial}_{-}\beta=\Omega\cos^{2}Ac\bar{\partial}_{-}\alpha-\Big(\frac{\kappa\sin2A\sin^{2} A}{1+\kappa}\Big)\omega-\frac{j\sin 2A}{1+\kappa}\bar{\partial}_{-}s.
\end{equation}

Combining with (\ref{3}) and (\ref{6}), we have
\begin{equation}\label{192306}
c\bar{\partial}_{+}\beta=-(1+\kappa)\tan A\bar{\partial}_{+}c+\omega\sin^2A\tan A+j\tan A \bar{\partial}_{+}s
\end{equation}
and
\begin{equation}\label{192307}
c\bar{\partial}_{+}\alpha=-\left(\frac{1+\kappa}{2}\right)\Omega\sin2 A\bar{\partial}_{+}c-\omega\sin^2A\tan A
+j\tan A \cos 2A\bar{\partial}_{+}s.
\end{equation}

Combining with (\ref{4}) and (\ref{5}), we have
\begin{equation}\label{192308}
c\bar{\partial}_{-}\alpha=(1+\kappa)\tan A\bar{\partial}_{-}c+\omega\sin^2A\tan A-j\tan A \bar{\partial}_{-}s
\end{equation}
and
\begin{equation}\label{192309}
c\bar{\partial}_{-}\beta=\left(\frac{1+\kappa}{2}\right) \Omega\sin2 A\bar{\partial}_{-}c-\omega\sin^2A\tan A
-j\tan A\cos2A \bar{\partial}_{-}s.
\end{equation}

Inserting (\ref{192306})--(\ref{192309}) into (\ref{72701}) and (\ref{72702}), we get
\begin{equation}\label{192303}
\bar{\partial}_{+}u=\kappa\sin\beta\bar{\partial}_{+}c+\omega\cos \sigma\sin A-j\sin \beta \bar{\partial}_{+}s,
\end{equation}
\begin{equation}\label{192304}
\bar{\partial}_{-}u=-\kappa\sin\alpha\bar{\partial}_{-}c-\omega\cos \sigma\sin A+j\sin \alpha \bar{\partial}_{-}s,
\end{equation}
\begin{equation}
\bar{\partial}_{+}v=-\kappa\cos\beta\bar{\partial}_{+}c+\omega\sin\sigma\sin A+j\cos \beta \bar{\partial}_{+}s,
\end{equation}
\begin{equation}\label{82505}
\bar{\partial}_{-}v=\kappa\cos\alpha\bar{\partial}_{-}c-\omega\sin\sigma\sin A-j\cos \alpha \bar{\partial}_{-}s.
\end{equation}

From (\ref{72801}) we have
\begin{equation}\label{72802}
\partial_{x}=-\frac{\sin\beta\bar{\partial}_{+}-\sin\alpha\bar{\partial}_{-}}{\sin2A},\quad
\partial_{y}=\frac{\cos\beta\bar{\partial}_{+}-\cos\alpha\bar{\partial}_{-}}{\sin2A},
\end{equation}
and
\begin{equation}\label{32403}
\bar{\partial}_{+}+\bar{\partial}_{-}=2\cos A \bar{\partial}_{0}.
\end{equation}
Thus, by $ \bar{\partial}_{0}s=0$ we have
\begin{equation}\label{32402}
c(\bar{\partial}_{+}j+\bar{\partial}_{-}j)=j(\bar{\partial}_{+}c+\bar{\partial}_{-}c)
\end{equation}
and
\begin{equation}\label{42001}
\bar{\partial}_{+}s~=~-\bar{\partial}_{-}s.
\end{equation}

\subsection{Characteristic decompositions}

The method of characteristic decomposition was introduced by Zheng {\it et al.} \cite{Chen1,Li1,Li-Zhang-Zheng,Li2,Li3,Li4} in inverting 2D pseudosteady rarefaction wave interactions.
 Chen and Qu \cite{CQ1} also used the method of characteristic decomposition to construct global solutions of 2D steady rarefaction wave interactions.
These results were obtained under the assumption that the flow is isentropic and irrotational.
In this paper, we are concerned with non-isentropic rotational flows. So, we need to derive some characteristic decompositions for the 2D steady full Euler system (\ref{PsEuler}). These decompositions
can be seen as systems of ``ordinary differential equations'' for some first order derivatives of the solution and will be extensively  used to control the bounds of the derivatives of the solution.

\begin{prop}
We have the commutator relations
\begin{equation}
\begin{array}{rcl}
\bar{\partial}_{0} \bar{\partial}_{+}- \bar{\partial}_{+} \bar{\partial}_{0}=
\displaystyle\frac{1}{\sin A}\Big[\big(\cos A \bar{\partial}_{+}\sigma- \bar{\partial}_{0}\alpha\big) \bar{\partial}_{0}-
\big( \bar{\partial}_{+}\sigma-\cos A \bar{\partial}_{0}\alpha\big) \bar{\partial}_{+}\Big]
\end{array}
\label{comm1}
\end{equation}
and
\begin{equation}
\begin{array}{rcl}
\bar{\partial}_{-} \bar{\partial}_{+}- \bar{\partial}_{+} \bar{\partial}_{-}=
\displaystyle\frac{1}{\sin2 A}\Big[\big(\cos2 A \bar{\partial}_{+}\beta- \bar{\partial}_{-}\alpha\big) \bar{\partial}_{-}-
\big( \bar{\partial}_{+}\beta-\cos2 A \bar{\partial}_{-}\alpha\big) \bar{\partial}_{+}\Big].
\end{array}
\label{comm}
\end{equation}
\end{prop}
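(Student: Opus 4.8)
The plan is to recognize both identities as instances of a single Lie-bracket computation and to obtain them by specialization. Each of $\bar{\partial}_{0},\bar{\partial}_{+},\bar{\partial}_{-}$ has the common form $\bar{\partial}_{\phi}:=\cos\phi\,\partial_{x}+\sin\phi\,\partial_{y}$ for the appropriate angle $\phi\in\{\sigma,\alpha,\beta\}$, where $\phi$ is itself a $C^{1}$ function of $(x,y)$. So the first step is to record the elementary fact that for two such first-order operators $\bar{\partial}_{\phi}$, $\bar{\partial}_{\psi}$ the second-order parts of $\bar{\partial}_{\phi}\bar{\partial}_{\psi}-\bar{\partial}_{\psi}\bar{\partial}_{\phi}$ cancel (the principal symbols are symmetric), leaving the first-order operator
\[ \bar{\partial}_{\phi}\bar{\partial}_{\psi} - \bar{\partial}_{\psi}\bar{\partial}_{\phi} = \big(\bar{\partial}_{\phi}\cos\psi - \bar{\partial}_{\psi}\cos\phi\big)\partial_{x} + \big(\bar{\partial}_{\phi}\sin\psi - \bar{\partial}_{\psi}\sin\phi\big)\partial_{y}. \]
Carrying out the differentiations, the coefficient of $\partial_{x}$ becomes $-\sin\psi\,\bar{\partial}_{\phi}\psi+\sin\phi\,\bar{\partial}_{\psi}\phi$ and that of $\partial_{y}$ becomes $\cos\psi\,\bar{\partial}_{\phi}\psi-\cos\phi\,\bar{\partial}_{\psi}\phi$.

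The second step is to re-expand $\partial_{x}$ and $\partial_{y}$ back into the frame $\{\bar{\partial}_{\phi},\bar{\partial}_{\psi}\}$. Since the determinant of the rows $(\cos\phi,\sin\phi)$ and $(\cos\psi,\sin\psi)$ equals $\sin(\psi-\phi)$, inverting the $2\times 2$ linear system gives $\partial_{x}=\big(\sin\psi\,\bar{\partial}_{\phi}-\sin\phi\,\bar{\partial}_{\psi}\big)/\sin(\psi-\phi)$ and $\partial_{y}=\big(-\cos\psi\,\bar{\partial}_{\phi}+\cos\phi\,\bar{\partial}_{\psi}\big)/\sin(\psi-\phi)$; for the choice $(\phi,\psi)=(\beta,\alpha)$ this is exactly the inversion (\ref{72802}) already recorded. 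Substituting these expressions into the operator from the first step and collapsing the resulting products of sines and cosines with the angle-difference identity $\sin\phi\sin\psi+\cos\phi\cos\psi=\cos(\psi-\phi)$, I expect the two coefficients to condense into the symmetric form
\[ \bar{\partial}_{\phi}\bar{\partial}_{\psi} - \bar{\partial}_{\psi}\bar{\partial}_{\phi} = \frac{1}{\sin(\psi-\phi)}\Big[\big(\cos(\psi-\phi)\,\bar{\partial}_{\psi}\phi - \bar{\partial}_{\phi}\psi\big)\bar{\partial}_{\phi} - \big(\bar{\partial}_{\psi}\phi - \cos(\psi-\phi)\,\bar{\partial}_{\phi}\psi\big)\bar{\partial}_{\psi}\Big]. \]

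The final step is to specialize. Taking $(\phi,\psi)=(\sigma,\alpha)$, so that $\psi-\phi=\alpha-\sigma=A$ by (\ref{tau}), turns $\bar{\partial}_{\psi}\phi=\bar{\partial}_{+}\sigma$ and $\bar{\partial}_{\phi}\psi=\bar{\partial}_{0}\alpha$ and yields (\ref{comm1}); taking $(\phi,\psi)=(\beta,\alpha)$, so that $\psi-\phi=\alpha-\beta=2A$, yields (\ref{comm}). The inversion in the second step requires $\sin(\psi-\phi)\neq 0$, i.e. $\sin A\neq 0$ and $\sin 2A\neq 0$, which hold throughout the supersonic regime where $0<A<\pi/2$, so the two frames are genuinely nondegenerate and both displayed formulas are well defined. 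I do not anticipate a substantive obstacle here: the argument is mechanical rather than delicate, and the only point needing care is the bookkeeping in the trigonometric collapse. The reason to organize it through the single general bracket identity is precisely so that the two stated relations fall out by one specialization instead of two separate hand computations, and so that the structural role of the Mach angle $A$ as the opening angle between the relevant characteristic frames is made transparent.
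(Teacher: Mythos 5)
Your proposal is correct and follows essentially the same route as the paper: both compute the commutator of the two directional-derivative operators, observe that the second-order terms cancel, and then re-express $\partial_x,\partial_y$ in the frame $\{\bar{\partial}_{\phi},\bar{\partial}_{\psi}\}$ via the $2\times2$ inversion (the paper's (\ref{41907})). The only difference is cosmetic: you carry out the computation once for general angles $\phi,\psi$ and obtain both (\ref{comm1}) and (\ref{comm}) by specialization, whereas the paper does the $(\sigma,\alpha)$ case explicitly and notes the other is similar.
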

\begin{proof}
The commutator relation (\ref{comm1}) was first proved by the author in \cite{Lai5}.
The commutator relation (\ref{comm}) was first given by Li and Zhang and Zheng in \cite{Li-Zhang-Zheng}.
For the sake of completeness, we sketch the proof.

From (\ref{tau}) and (\ref{72801}) we have
\begin{equation}\label{41907}
\partial_{x}=\frac{\sin\alpha\bar{\partial}_{0}-\sin\sigma\bar{\partial}_{+}}{\sin A}\quad \mbox{and}\quad
\partial_{y}=-\frac{\cos\alpha\bar{\partial}_{0}-\cos\sigma\bar{\partial}_{+}}{\sin A}.
\end{equation}
By computation, we have
$$
\begin{aligned}
&\bar{\partial}_{0} \bar{\partial}_{+}- \bar{\partial}_{+} \bar{\partial}_{0}\\~=~&(\cos\sigma\partial_{x}+\sin\sigma\partial_{y})(\cos\alpha\partial_{x}+\sin\alpha\partial_{y})
-(\cos\alpha\partial_{x}+\sin\alpha\partial_{y})(\cos\sigma\partial_{x}+\sin\sigma\partial_{y})
\\~=~&(\bar{\partial}_{0}\cos\alpha)\partial_{x}+(\bar{\partial}_{0}\sin\alpha)\partial_{y}
-(\bar{\partial}_{+}\cos\sigma)\partial_{x}-(\bar{\partial}_{+}\sin\sigma)\partial_{y}.
\end{aligned}
$$
Inserting (\ref{41907}) into this we can get (\ref{comm1}). The proof for (\ref{comm}) is similar.
\end{proof}

\begin{prop}For the derivative of the entropy, we have
\begin{equation}\label{4105}
\bar{\partial}_{0}\Big(\frac{\bar{\partial}_{+} s}{c^{\frac{\gamma+1}{\gamma-1}}}\Big)
~=~0.
\end{equation}
\end{prop}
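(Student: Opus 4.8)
The plan is to expand the derivative by the product rule and reduce the whole statement to a single identity for the mixed second-order derivative $\bar{\partial}_{0}\bar{\partial}_{+}s$. Writing $m=\frac{\gamma+1}{\gamma-1}$, one has
$$
\bar{\partial}_{0}\Big(\frac{\bar{\partial}_{+}s}{c^{m}}\Big)=\frac{1}{c^{m}}\Big(\bar{\partial}_{0}\bar{\partial}_{+}s-m\,\frac{\bar{\partial}_{+}s\,\bar{\partial}_{0}c}{c}\Big),
$$
so it suffices to prove that $\bar{\partial}_{0}\bar{\partial}_{+}s=m\,\frac{\bar{\partial}_{+}s\,\bar{\partial}_{0}c}{c}$. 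Since $\kappa=\frac{2}{\gamma-1}$ gives $1+\kappa=m$, the target is equivalently $\bar{\partial}_{0}\bar{\partial}_{+}s=(1+\kappa)\frac{\bar{\partial}_{0}c}{c}\,\bar{\partial}_{+}s$, and matching this power $1+\kappa$ is exactly what forces the exponent $\frac{\gamma+1}{\gamma-1}$ in the statement.

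The first step is to apply the commutator relation (\ref{comm1}) to the entropy $s$. The key simplification is that $\bar{\partial}_{0}s=0$, which follows from the fourth equation of (\ref{PsEuler}) together with $u=q\cos\sigma$, $v=q\sin\sigma$. Hence $\bar{\partial}_{+}\bar{\partial}_{0}s=0$, and the term carrying the factor $\bar{\partial}_{0}s$ on the right-hand side of (\ref{comm1}) also vanishes, leaving
$$
\bar{\partial}_{0}\bar{\partial}_{+}s=-\frac{1}{\sin A}\big(\bar{\partial}_{+}\sigma-\cos A\,\bar{\partial}_{0}\alpha\big)\bar{\partial}_{+}s.
$$
It then remains only to identify the coefficient $\bar{\partial}_{+}\sigma-\cos A\,\bar{\partial}_{0}\alpha$.

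For this I would use $\sigma=\tfrac12(\alpha+\beta)$ together with (\ref{32403}), which gives $\bar{\partial}_{0}\alpha=\frac{\bar{\partial}_{+}\alpha+\bar{\partial}_{-}\alpha}{2\cos A}$. A short computation collapses the coefficient to $\tfrac12(\bar{\partial}_{+}\beta-\bar{\partial}_{-}\alpha)$. Now evaluate $\bar{\partial}_{+}\beta$ and $\bar{\partial}_{-}\alpha$ by the characteristic equations (\ref{192306}) and (\ref{192308}). Subtracting them, the vorticity contributions $\omega\sin^{2}A\tan A$ cancel identically, and the entropy contributions cancel because $\bar{\partial}_{+}s=-\bar{\partial}_{-}s$ by (\ref{42001}); only the $c$-terms survive, yielding $c(\bar{\partial}_{+}\beta-\bar{\partial}_{-}\alpha)=-(1+\kappa)\tan A(\bar{\partial}_{+}c+\bar{\partial}_{-}c)$. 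Using (\ref{32403}) once more, $\bar{\partial}_{+}c+\bar{\partial}_{-}c=2\cos A\,\bar{\partial}_{0}c$, so that $\bar{\partial}_{+}\beta-\bar{\partial}_{-}\alpha=-\frac{2(1+\kappa)\sin A}{c}\,\bar{\partial}_{0}c$. Substituting back produces $\bar{\partial}_{0}\bar{\partial}_{+}s=(1+\kappa)\frac{\bar{\partial}_{0}c}{c}\,\bar{\partial}_{+}s$, which is precisely the required identity.

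Once the commutator is in hand the computation is essentially mechanical, so there is no serious analytical obstacle; the work is purely algebraic. The one step that must be handled with care — and on which the whole identity hinges — is the pair of twin cancellations inside $\bar{\partial}_{+}\beta-\bar{\partial}_{-}\alpha$: the vorticity terms cancel thanks to the structural symmetry of (\ref{192306})--(\ref{192308}), while the entropy terms cancel only because of the relation $\bar{\partial}_{+}s=-\bar{\partial}_{-}s$ in (\ref{42001}). If either cancellation failed, the coefficient would not be a pure multiple of $\bar{\partial}_{0}c/c$ and the clean conservation along streamlines would be destroyed. Verifying these cancellations and then reading off the exponent via $1+\kappa=\frac{\gamma+1}{\gamma-1}$ is the crux of the argument.
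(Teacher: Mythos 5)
Your proof is correct and follows essentially the same route as the paper: apply the commutator relation (\ref{comm1}) to $s$, use $\bar{\partial}_{0}s=0$ to reduce the coefficient to $\tfrac12(\bar{\partial}_{+}\beta-\bar{\partial}_{-}\alpha)$, invoke (\ref{192306}) and (\ref{192308}) so that the vorticity terms cancel structurally and the entropy terms cancel via (\ref{42001}), and close with (\ref{32403}) and the product rule. The cancellations you flag as the crux are exactly the ones the paper relies on, and your identification $1+\kappa=\frac{\gamma+1}{\gamma-1}$ matches the paper's final step.
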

\begin{proof}
From (\ref{192306}), (\ref{192308}), (\ref{32403}), (\ref{42001}), and (\ref{comm1})  we have
$$
\begin{aligned}
\bar{\partial}_{0}\bar{\partial}_{+} s&=\frac{1}{\sin A}\big(\cos A\bar{\partial}_{0}\alpha-
\bar{\partial}_{+}\sigma\big)\bar{\partial}_{+} s
\\&=\frac{1}{\sin A}\Big[\frac{1}{2}(\bar{\partial}_{+}\alpha+\bar{\partial}_{-}\alpha)
-\bar{\partial}_{+}\sigma\Big]\bar{\partial}_{+} s\\&=\frac{1}{2\sin A}\Big[\bar{\partial}_{-}\alpha
-\bar{\partial}_{+}\beta\Big]\bar{\partial}_{+} s\\&=\frac{1}{2c\sin A}(1+\kappa)\tan A\Big[\bar{\partial}_{-}c
+\bar{\partial}_{+}c\Big]\bar{\partial}_{+} s\\&=\Big(\frac{\gamma+1}{\gamma-1}\Big)\frac{\bar{\partial}_{0}c\bar{\partial}_{+} s}{c}.
\end{aligned}
$$
Thus, we have
$$
\bar{\partial}_{0}\Big(\frac{\bar{\partial}_{+} s}{c^{\frac{\gamma+1}{\gamma-1}}}\Big)
~=~c^{-\frac{\gamma+1}{\gamma-1}}\bar{\partial}_{0}\bar{\partial}_{+} s-\frac{\gamma+1}{\gamma-1}c^{\frac{-2\gamma}{\gamma-1}}\bar{\partial}_{0}c\bar{\partial}_{+} s~=~0.
$$

We then have this proposition.
\end{proof}

\begin{prop}For the vorticity, we have
\begin{equation}\label{72803}
\begin{aligned}
\bar{\partial}_{0}\Big(\frac{\omega}{\rho}\Big)
=-\Big(\frac{\bar{\partial}_{+}s}{c^{\frac{\gamma+1}{\gamma-1}}}\Big)\frac{(s\gamma)^{\frac{1}{\gamma-1}} }{\gamma(\gamma-1)s}\bar{\partial}_{0}c^2.
\end{aligned}
\end{equation}
\end{prop}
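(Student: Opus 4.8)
The plan is to first derive a transport equation for the vorticity along the streamlines and then rewrite it in terms of $\omega/\rho$ via the continuity equation, so that the dilatation terms cancel and only the transverse entropy gradient survives as a source. I would begin from the two Crocco-type relations obtained just before (\ref{726021}),
\[
\widehat{E}_x=-v\omega+\frac{\rho^{\gamma-1}}{\gamma-1}s_x,\qquad
\widehat{E}_y=u\omega+\frac{\rho^{\gamma-1}}{\gamma-1}s_y .
\]
Applying $\partial_y$ to the first and $\partial_x$ to the second and subtracting eliminates $\widehat{E}$ through $\widehat{E}_{xy}=\widehat{E}_{yx}$; after the $s_{xy}=s_{yx}$ cancellation this yields the steady vorticity balance
\[
u\omega_x+v\omega_y+(u_x+v_y)\omega+\rho^{\gamma-2}\big(\rho_x s_y-\rho_y s_x\big)=0 .
\]

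Next I would divide by $\rho$ and substitute the continuity equation in the form $u_x+v_y=-\rho^{-1}(u\rho_x+v\rho_y)$. Using $\bar{\partial}_0=\frac{1}{q}(u\partial_x+v\partial_y)$ (which follows from (\ref{72801}) and (\ref{U})) and expanding
\[
\bar{\partial}_0\Big(\frac{\omega}{\rho}\Big)=\frac{1}{q}\Big[\rho^{-1}(u\omega_x+v\omega_y)-\omega\rho^{-2}(u\rho_x+v\rho_y)\Big],
\]
the two contributions proportional to $(u_x+v_y)\omega$ cancel identically, leaving the clean identity
\[
\bar{\partial}_0\Big(\frac{\omega}{\rho}\Big)=-\frac{\rho^{\gamma-3}}{q}\big(\rho_x s_y-\rho_y s_x\big).
\]

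It then remains to evaluate the Jacobian. The key structural fact is $\bar{\partial}_0 s=0$: the entropy gradient is orthogonal to the flow, so writing $\partial_n:=-\sin\sigma\,\partial_x+\cos\sigma\,\partial_y$ one has $s_x=-\sin\sigma\,\partial_n s$, $s_y=\cos\sigma\,\partial_n s$, and a short computation collapses the Jacobian to $\bar{\partial}_0\rho\cdot\partial_n s$. From $c^2=\gamma s\rho^{\gamma-1}$ together with $\bar{\partial}_0 s=0$ I obtain $\bar{\partial}_0\rho=\bar{\partial}_0 c^2/\big(\gamma(\gamma-1)s\rho^{\gamma-2}\big)$, while projecting $\bar{\partial}_+=\cos\alpha\,\partial_x+\sin\alpha\,\partial_y$ with $\alpha=\sigma+A$ onto the normal gives $\partial_n s=\bar{\partial}_+ s/\sin A$. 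Substituting these, and then using $c=q\sin A$ from (\ref{210cqo}) together with the polytropic identity $\rho=\big(c^2/(\gamma s)\big)^{1/(\gamma-1)}$ to collapse the powers of $\rho$ and $c$ (the combination $\rho^{-1}c^{-1}$ becomes exactly $(s\gamma)^{\frac{1}{\gamma-1}}c^{-\frac{\gamma+1}{\gamma-1}}$), produces precisely (\ref{72803}).

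The main obstacle is this last Jacobian step: the reduction $\rho_x s_y-\rho_y s_x=\bar{\partial}_0\rho\cdot\partial_n s$ relies crucially on $\bar{\partial}_0 s=0$, and matching the exact exponent $c^{\frac{\gamma+1}{\gamma-1}}$ and the prefactor $(s\gamma)^{\frac{1}{\gamma-1}}$ demands careful bookkeeping of the thermodynamic powers coming from $c^2=\gamma s\rho^{\gamma-1}$. The conceptual point is that the dilatation $u_x+v_y$ drops out entirely, so that the variation of $\omega/\rho$ along a streamline is driven solely by the streamline-invariant transverse entropy gradient $\bar{\partial}_+s/c^{\frac{\gamma+1}{\gamma-1}}$ (whose invariance is the content of the preceding proposition) multiplied by the change in $c^2$.
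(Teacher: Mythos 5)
Your proof is correct, and it reaches (\ref{72803}) by a route that is genuinely more self-contained than the paper's. The paper also starts from the vorticity balance $u\omega_x+v\omega_y+(u_x+v_y)\omega+\tau^{-\gamma}(\tau_y s_x-\tau_x s_y)=0$ (its (\ref{192301}), equivalent to your $\rho^{\gamma-2}(\rho_x s_y-\rho_y s_x)$ form), but it then evaluates $u_x+v_y$ and the baroclinic Jacobian by expanding $\partial_x,\partial_y$ through $\bar{\partial}_\pm$ via (\ref{72802}) and the decompositions (\ref{192303})--(\ref{82505}), obtaining first the intermediate formula $\bar{\partial}_0\omega=\frac{\kappa\omega}{c}\bar{\partial}_0 c-2j\bar{\partial}_{+}s\frac{\bar{\partial}_0 c}{c}$ (its (\ref{4801}), which it reuses later, e.g.\ in (\ref{32404})) before passing to $\omega/\rho$. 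You instead divide by $\rho$ at the outset so that the continuity equation cancels the dilatation term identically, and you collapse the Jacobian by the purely geometric observation that $\bar{\partial}_0 s=0$ forces $(s_x,s_y)$ to be normal to the flow, giving $\rho_x s_y-\rho_y s_x=\bar{\partial}_0\rho\cdot\partial_n s$ with $\partial_n s=\bar{\partial}_{+}s/\sin A$; the final bookkeeping via $c=q\sin A$ and $\rho=(c^2/(\gamma s))^{1/(\gamma-1)}$ checks out exactly. What your version buys is independence from the characteristic decompositions of Section 2.1 (only the continuity and Crocco relations plus $\bar{\partial}_0 s=0$ are needed); what the paper's version buys is the explicit transport equation (\ref{4801}) for $\omega$ itself, which it needs elsewhere. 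One small caveat: if you present this as a standalone proof you should still record (\ref{4801}) or an equivalent, since the later estimates use $\bar{\partial}_0\omega$ and not only $\bar{\partial}_0(\omega/\rho)$.
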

\begin{proof}
From the second and the third equations of (\ref{PsEuler}), we have
\begin{equation}\label{192301}
u\omega_x+v\omega_y+(u_x+v_y)\omega+\tau^{-\gamma}\big(\tau_y s_x-\tau_x s_y\big)=0.
\end{equation}

From (\ref{tau}), (\ref{42001}), and (\ref{192303})--(\ref{72802}) we have
\begin{equation}\label{41105}
\begin{aligned}
u_x+v_y&=\frac{1}{\sin 2A}\Big(\sin\alpha\bar{\partial}_{-}u-\sin\beta\bar{\partial}_{+}u+
\cos\beta\bar{\partial}_{+}v-\cos\alpha\bar{\partial}_{-}v\Big)\\&=
-\frac{\kappa}{\sin 2A}(\bar{\partial}_{+}c+\bar{\partial}_{-}c)\\&=-
\frac{\kappa}{\sin A}\bar{\partial}_{0}c
\end{aligned}
\end{equation}
and
\begin{equation}\label{41106}
\begin{aligned}
&\tau_y s_x-\tau_x s_y\\[4pt]
=&\Bigg\{\Big(\frac{\cos\alpha+\cos\beta}{\sin2A}\Big)\frac{\sin\beta\bar{\partial}_{+}\tau
-\sin\alpha\bar{\partial}_{-}\tau}{\sin2A}-
\Big(\frac{\sin\alpha+\sin\beta}{\sin2A}\Big)\frac{\cos\beta\bar{\partial}_{+}\tau
-\cos\alpha\bar{\partial}_{-}\tau}{\sin2A}\Bigg\}\bar{\partial}_{+}s
\\=&-\frac{\bar{\partial}_{+}s}{\sin 2A}(\bar{\partial}_{+}\tau+\bar{\partial}_{-}\tau)
\\=&-
\frac{\bar{\partial}_{+}s}{\sin A}\bar{\partial}_{0}\tau.
\end{aligned}
\end{equation}

Inserting (\ref{41105}) and (\ref{41106}) into (\ref{192301}), we get
\begin{equation}\label{4801}
\begin{aligned}
\bar{\partial}_{0}\omega&~=~\frac{\kappa\omega}{c}\bar{\partial}_{0}c+
\tau^{-\gamma}\frac{\bar{\partial}_{+}s}{c}\bar{\partial}_{0}\tau~=~
\frac{\kappa\omega}{c}\bar{\partial}_{0}c-2j\bar{\partial}_{+}s\frac{\bar{\partial}_{0}c}{c}.
\end{aligned}
\end{equation}
Consequently, we get
$$
\begin{aligned}
\bar{\partial}_{0}\Big(\frac{\omega}{\rho}\Big)&=\frac{1}{\rho}\bar{\partial}_{0}\omega
-\frac{\omega}{\rho^2}\bar{\partial}_{0}\rho\\&=
\frac{\kappa\omega}{\rho c}\bar{\partial}_{0}c-2j\bar{\partial}_{+}s\frac{\bar{\partial}_{0}c}{\rho c}-\frac{\omega}{\rho^2}\bar{\partial}_{0}\rho
\\&=-2j\bar{\partial}_{+}s\frac{\bar{\partial}_{0}c}{\rho c}~=~-\Big(\frac{\bar{\partial}_{+}s}{c^{\frac{\gamma+1}{\gamma-1}}}\Big)\frac{(s\gamma)^{\frac{1}{\gamma-1}} }{\gamma(\gamma-1)s}\bar{\partial}_{0}c^2.
\end{aligned}
$$

We then have this proposition.
\end{proof}

\begin{prop}\label{10604}
We have the characteristic decompositions:
\begin{equation}\label{cd}
\left\{
  \begin{array}{ll}
  \begin{array}{rcl}
 \displaystyle c\bar{\partial}_{-}\Big(\bar{\partial}_{+}c-\frac{j\bar{\partial}_{+}s}{\kappa}\Big)&=&\displaystyle
   \frac{(\gamma+1)\bar{\partial}_{+}c\bar{\partial}_{+}c}{2(\gamma-1)\cos^2A}+\frac{(\gamma+1)-2\sin^2 2A}{2(\gamma-1)\cos^2 A}
   \bar{\partial}_{-}c\bar{\partial}_{+}c\\[14pt]&&\displaystyle
+(H_{11}\bar{\partial}_{-}c+H_{12}\bar{\partial}_{+}c)\omega\sin^2A+(H_{13}\bar{\partial}_{-}c+H_{14}\bar{\partial}_{+}c)
j\bar{\partial}_{+}s\\
[8pt]&&\displaystyle +H_{15}(j\bar{\partial}_{+}s)^2+ H_{16}\omega\sin^2Aj\bar{\partial}_{+}s,
\end{array}
\\[38pt]
\begin{array}{rcl}
   \displaystyle c\bar{\partial}_{+}\Big(\bar{\partial}_{-}c-\frac{j\bar{\partial}_{-}s}{\kappa}\Big)&=&\displaystyle
\frac{(\gamma+1)\bar{\partial}_{-}c \bar{\partial}_{-}c}{2(\gamma-1)\cos^2A}+\frac{(\gamma+1)-2\sin^2 2A}{2(\gamma-1)\cos^2 A}
   \bar{\partial}_{+}c \bar{\partial}_{-}c\\[14pt]&&\displaystyle
+(H_{21}\bar{\partial}_{-}c+H_{22}\bar{\partial}_{+}c)\omega\sin^2A+(H_{23}\bar{\partial}_{-}c+H_{24}\bar{\partial}_{+}c)
j\bar{\partial}_{+}s\\
[8pt]&&\displaystyle +H_{25}(j\bar{\partial}_{+}s)^2+ H_{26}\omega\sin^2Aj\bar{\partial}_{+}s,
\end{array}
  \end{array}
\right.
\end{equation}
where $H_{ij}=\frac{{f_{ij}}}{\cos ^2 A}$, and ${f_{ij}}$ are polynomial forms in terms of $\sin A$.
\end{prop}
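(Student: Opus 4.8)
The plan is to compute the left-hand side of the first identity in (\ref{cd}) directly, writing $R_{+}=\bar{\partial}_{+}c-\frac{j}{\kappa}\bar{\partial}_{+}s$ as in (\ref{1973101}) and splitting
$$c\bar{\partial}_{-}R_{+}=c\bar{\partial}_{-}\bar{\partial}_{+}c-\frac{c}{\kappa}\bar{\partial}_{-}\big(j\,\bar{\partial}_{+}s\big).$$
The goal is to reduce every second-order object that appears to a combination of the first-order quantities $\bar{\partial}_{+}c$, $\bar{\partial}_{-}c$, $j\bar{\partial}_{+}s$ and $\omega\sin^{2}A$. The engine for the mixed derivative $\bar{\partial}_{-}\bar{\partial}_{+}c$ is the commutator relation (\ref{comm}). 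First I would start from one of the primary angle equations (\ref{192306})--(\ref{192309}) — which already express $\bar{\partial}_{\pm}\alpha$ and $\bar{\partial}_{\pm}\beta$ linearly in $\bar{\partial}_{\pm}c$, $\omega$ and $\bar{\partial}_{\pm}s$ — apply the transverse derivative $\bar{\partial}_{-}$, and use (\ref{comm}) to reorder the mixed second derivatives of the angles. After this reordering, every remaining angle derivative is eliminated by substituting (\ref{192306})--(\ref{192309}) a second time, so that the only second-order objects surviving are $\bar{\partial}_{-}\bar{\partial}_{+}c$, its companion $\bar{\partial}_{+}\bar{\partial}_{-}c$, and derivatives of $\omega$ and of $\bar{\partial}_{+}s$. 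Since $\bar{\partial}_{-}\bar{\partial}_{+}c-\bar{\partial}_{+}\bar{\partial}_{-}c$ is itself first order by (\ref{comm}) applied to $c$, collecting these terms yields a closed algebraic relation that can be solved for $c\bar{\partial}_{-}\bar{\partial}_{+}c$.

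The derivatives of $\omega$ and of the entropy are the ones that must not survive, so they have to be traded for transport quantities. Here I would use the vorticity transport equation (\ref{4801}), which gives $\bar{\partial}_{0}\omega$ as a first-order product, together with (\ref{4105}) for $\bar{\partial}_{0}\bar{\partial}_{+}s$, and convert all transverse derivatives of $\omega$ and of $\bar{\partial}_{+}s$ into streamline derivatives via $\bar{\partial}_{+}+\bar{\partial}_{-}=2\cos A\,\bar{\partial}_{0}$ (see (\ref{32403})), so that only the controlled combinations $\bar{\partial}_{0}\omega$ and $\bar{\partial}_{0}\bar{\partial}_{+}s$ appear; the coefficients $\bar{\partial}_{\pm}j$ are handled through (\ref{32402}), (\ref{42001}) and $j=c/(\gamma(\gamma-1)s)$. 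The essential role of the correction $-\frac{j}{\kappa}\bar{\partial}_{+}s$ in $R_{+}$ is precisely to kill the leftover second-order entropy contribution: the reduction of $c\bar{\partial}_{-}\bar{\partial}_{+}c$ generates a term proportional to $\bar{\partial}_{-}\bar{\partial}_{+}s$, and the subtracted piece $\frac{c}{\kappa}\bar{\partial}_{-}(j\bar{\partial}_{+}s)$ carries the same second derivative with the matching coefficient, so the two cancel. This cancellation is exactly what makes the right-hand side of (\ref{cd}) purely quadratic in the first-order data.

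Finally, sorting the surviving terms by the monomials $(\bar{\partial}_{+}c)^{2}$, $\bar{\partial}_{-}c\,\bar{\partial}_{+}c$, $\omega\sin^{2}A\,\bar{\partial}_{\pm}c$, $j\bar{\partial}_{+}s\,\bar{\partial}_{\pm}c$, $(j\bar{\partial}_{+}s)^{2}$ and $\omega\sin^{2}A\,j\bar{\partial}_{+}s$ produces the first line of (\ref{cd}); because every coefficient is a rational expression in $\sin A$ and $\cos A$ with denominator $\cos^{2}A$, they take the stated form $H_{ij}=f_{ij}/\cos^{2}A$ with $f_{ij}$ polynomial in $\sin A$. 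The second identity then follows from the first by the reflection symmetry $+\leftrightarrow-$ of the system, under which $\omega\mapsto-\omega$ and $\bar{\partial}_{+}s\mapsto\bar{\partial}_{-}s=-\bar{\partial}_{+}s$ by (\ref{42001}), leaving the products $(j\bar{\partial}_{+}s)^{2}$ and $\omega\sin^{2}A\,j\bar{\partial}_{+}s$ invariant and interchanging the two lines, so no separate computation is needed. I expect the main obstacle to be not conceptual but the sheer volume of bookkeeping: one must propagate a large number of first-order products through the commutator and the repeated substitution of (\ref{192306})--(\ref{192309}), and verify that every derivative of $\omega$ and every second derivative of $s$ drops out, leaving only the six monomials above. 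Arranging the substitutions so that $\omega$ and $\bar{\partial}_{+}s$ are always differentiated in the streamline direction $\bar{\partial}_{0}$, where (\ref{4801}) and (\ref{4105}) apply, is what keeps this cancellation transparent.
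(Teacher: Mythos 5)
Your proposal is correct and follows essentially the same route as the paper: the paper enters the computation by applying the commutator (\ref{comm}) to $u$ and substituting (\ref{192303})--(\ref{192304}) rather than by differentiating the angle equations, but the subsequent steps — eliminating all angle derivatives via (\ref{192306})--(\ref{192309}), converting the transverse derivatives of $\omega$ and $\bar{\partial}_{+}s$ into streamline derivatives through (\ref{32403}) so that (\ref{4801}) and (\ref{4105}) apply, handling $\bar{\partial}_{\pm}j$ with (\ref{32402}) and (\ref{42001}), and absorbing the surviving $\bar{\partial}_{-}(j\bar{\partial}_{+}s)$ into $R_{+}$ — are exactly the ones you describe. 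Your symmetry argument for the second identity is consistent with the paper's ``the proof for the other is similar,'' and is harmless here since the statement only requires the $H_{2j}$ to have the form $f_{2j}/\cos^{2}A$.
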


\begin{proof}
From (\ref{comm}) we have
$$
\begin{array}{rcl}
\bar{\partial}_{-} \bar{\partial}_{+}u- \bar{\partial}_{+} \bar{\partial}_{-}u=
\displaystyle\frac{1}{\sin2 A}\Big[\big(\cos2 A \bar{\partial}_{+}\beta- \bar{\partial}_{-}\alpha\big) \bar{\partial}_{-}u-
\big( \bar{\partial}_{+}\beta-\cos2 A \bar{\partial}_{-}\alpha\big) \bar{\partial}_{+}u\Big].
\end{array}
$$
Inserting (\ref{192303}) and (\ref{192304}) into this, we have
$$
\begin{array}{rcl}
&&\bar{\partial}_{-}\big[\kappa\sin\beta\bar{\partial}_{+}c+\omega\cos \sigma\sin A-j\sin \beta \bar{\partial}_{+}s]\\[6pt]&&\quad+
\bar{\partial}_{+}\big[\kappa\sin\alpha\bar{\partial}_{-}c+\omega\cos \sigma\sin A-j\sin \alpha \bar{\partial}_{-}s\big]
\\[6pt]&=&\displaystyle\frac{1}{\sin2 A}\Big[( \bar{\partial}_{-}\alpha- \cos2 A\bar{\partial}_{+}\beta)(\kappa\sin\alpha\bar{\partial}_{-}c+\omega\cos \sigma\sin A-j\sin \alpha \bar{\partial}_{-}s)\\[8pt]&&\displaystyle\qquad\quad-(\bar{\partial}_{+}\beta-\cos2 A \bar{\partial}_{-}\alpha)(\kappa\sin\beta\bar{\partial}_{+}c+\omega\cos \sigma\sin A-j\sin \beta \bar{\partial}_{+}s)\Big].
\end{array}
$$
Thus, we have
\begin{equation}\label{32401}
\begin{aligned}
&\kappa\sin\beta\bar{\partial}_{-}\bar{\partial}_{+}c+\kappa\sin\alpha\bar{\partial}_{+}\bar{\partial}_{-}c
+\kappa\cos\beta\bar{\partial}_{-}\beta\bar{\partial}_{+}c+\kappa\cos\alpha\bar{\partial}_{+}\alpha\bar{\partial}_{-}c
\\&+\bar{\partial}_{-}(\omega\cos \sigma\sin A)+
\bar{\partial}_{+}(\omega\cos \sigma\sin A)-\bar{\partial}_{-}\big(j\sin \beta \bar{\partial}_{+}s\big)
-\bar{\partial}_{+}\big(j\sin \alpha \bar{\partial}_{-}s\big)
\\~=~&\frac{\kappa}{\sin2 A}\Big[(\sin\alpha \bar{\partial}_{-}\alpha-\sin\alpha \cos2 A\bar{\partial}_{+}\beta)\bar{\partial}_{-}c-(\sin\beta\bar{\partial}_{+}\beta-\sin\beta\cos2 A \bar{\partial}_{-}\alpha)\bar{\partial}_{+}c\Big]\\&
-\frac{\omega\cos\sigma\sin A}{\sin2A}\Big[ \cos2 A\bar{\partial}_{+}\beta- \bar{\partial}_{-}\alpha+\bar{\partial}_{+}\beta-\cos2 A \bar{\partial}_{-}\alpha\Big]\\&+
\frac{1}{\sin2 A}\Big[(\cos2 A\bar{\partial}_{+}\beta- \bar{\partial}_{-}\alpha)j\sin \alpha \bar{\partial}_{-}s+(\bar{\partial}_{+}\beta-\cos2 A \bar{\partial}_{-}\alpha)j\sin \beta \bar{\partial}_{+}s\Big],
\end{aligned}
\end{equation}
Inserting
$$
\begin{array}{rcl}
\bar{\partial}_{+} \bar{\partial}_{-}c~=~\bar{\partial}_{-} \bar{\partial}_{+}c-
\displaystyle\frac{1}{\sin2 A}\Big[\big(\cos2 A \bar{\partial}_{+}\beta- \bar{\partial}_{-}\alpha\big) \bar{\partial}_{-}c-
\big( \bar{\partial}_{+}\beta-\cos2 A \bar{\partial}_{-}\alpha\big) \bar{\partial}_{+}c\Big]
\end{array}
$$
into (\ref{32401}), we get
\begin{equation}\label{192305}
\begin{aligned}
&(\sin\alpha+\sin\beta)c\bar{\partial}_{-}\bar{\partial}_{+}c\\~=~&
-\frac{c}{\sin2 A}\Big[(\sin\beta+\sin\alpha)\bar{\partial}_{+}\beta-(\sin\alpha+\sin\beta)\cos2 A \bar{\partial}_{-}\alpha+\cos\beta\sin2A\bar{\partial}_{-}\beta\Big]\bar{\partial}_{+}c\\&
-c\cos\alpha\bar{\partial}_{+}\alpha\bar{\partial}_{-}c
-\frac{c\omega\cos\sigma\sin A}{\kappa\sin2A}\Big[ \cos2 A\bar{\partial}_{+}\beta- \bar{\partial}_{-}\alpha+\bar{\partial}_{+}\beta-\cos2 A \bar{\partial}_{-}\alpha\Big]\\&-\frac{c}{\kappa}\Big[\bar{\partial}_{-}(\omega\cos \sigma\sin A)+
\bar{\partial}_{+}(\omega\cos \sigma\sin A)\Big]+\frac{c}{\kappa}\Big[\bar{\partial}_{-}\big(j\sin \beta \bar{\partial}_{+}s\big)
+\bar{\partial}_{+}\big(j\sin \alpha \bar{\partial}_{-}s\big)\Big]\\&+
\frac{c}{\kappa\sin2 A}\Big[(\cos2 A\bar{\partial}_{+}\beta- \bar{\partial}_{-}\alpha)j\sin \alpha \bar{\partial}_{-}s+(\bar{\partial}_{+}\beta-\cos2 A \bar{\partial}_{-}\alpha)j\sin \beta \bar{\partial}_{+}s\Big].
\end{aligned}
\end{equation}
In what follows, we are going to compute the right part of (\ref{192305}) item by item.

\vskip 6pt
{\it Part 1.} Using (\ref{192306})--(\ref{192309}), we have
\begin{equation}\label{4803}
\begin{aligned}
&-c\cos\alpha\bar{\partial}_{+}\alpha\bar{\partial}_{-}c\\
&-\frac{c}{\sin2 A}\Big[(\sin\beta+\sin\alpha)\bar{\partial}_{+}\beta-(\sin\alpha+\sin\beta)\cos2 A \bar{\partial}_{-}\alpha+\cos\beta\sin2A\bar{\partial}_{-}\beta\Big]\bar{\partial}_{+}c
\\~=~&\Big(\frac{1+\kappa}{2}\Big)\Omega\cos\alpha\sin 2A\bar{\partial}_{+}c\bar{\partial}_{-}c+\frac{1}{\sin2 A}\Big[(1+\kappa)(\sin\beta+\sin\alpha)\tan A\bar{\partial}_{+}c\\&+(1+\kappa)(\sin\alpha+\sin\beta)\tan A\cos2 A \bar{\partial}_{-}c-\Big(\frac{1+\kappa}{2}\Big)\Omega\cos\beta\sin^2 2A\bar{\partial}_{-}c\Big]\bar{\partial}_{+}c\\&+\omega\cos\alpha\sin^2A\tan A\bar{\partial}_{-}c
-\cos\alpha\tan A\cos 2A j\bar{\partial}_{+}s\bar{\partial}_{-}c
\\~&-\frac{\omega\sin^2A\tan A}{\sin2 A}\Big[(\sin\beta+\sin\alpha)-(\sin\alpha+\sin\beta)\cos2 A -\cos\beta\sin2A\Big]\bar{\partial}_{+}c
\\~&-\frac{j\tan A\bar{\partial}_{+}s}{\sin2 A}\Big[(\sin\beta+\sin\alpha)-(\sin\alpha+\sin\beta)\cos2 A +\cos\beta\sin2A\cos 2A\Big]\bar{\partial}_{+}c.
\end{aligned}
\end{equation}

By (\ref{tau}) and $\sin\alpha+\sin\beta=2\sin\sigma\cos A$, we have
\begin{equation}
\begin{aligned}
&\frac{1}{\sin\alpha+\sin\beta}\Bigg\{\Big(\frac{1+\kappa}{2}\Big)\Omega\cos\alpha\sin 2A\bar{\partial}_{+}c\bar{\partial}_{-}c+\frac{1}{\sin2 A}\Big[(1+\kappa)(\sin\beta+\sin\alpha)\tan A\bar{\partial}_{+}c\\&\qquad\qquad+(1+\kappa)(\sin\alpha+\sin\beta)\tan A\cos2 A \bar{\partial}_{-}c-\Big(\frac{1+\kappa}{2}\Big)\Omega\cos\beta\sin^2 2A\bar{\partial}_{-}c\Big]\bar{\partial}_{+}c\Bigg\}\\~=~&\frac{(\gamma+1)\bar{\partial}_{+}c\bar{\partial}_{+}c}{2(\gamma-1)\cos^2A}+
\Bigg(\frac{(1+\kappa)\cos 2A }{2\cos^2 A}-(1+\kappa)\Omega \sin^2 A\Bigg)\bar{\partial}_{+}c\bar{\partial}_{-}c
\\~=~&\frac{(\gamma+1)\bar{\partial}_{+}c\bar{\partial}_{+}c}{2(\gamma-1)\cos^2A}
+\Bigg(\frac{\gamma-3}{\gamma-1}\sin^2 A+\frac{(\gamma+1)\sin^4 A}{(\gamma-1)\cos^2 A}+\frac{(\gamma+1)\cos 2 A}{2(\gamma-1)\cos^2 A}\Bigg)\bar{\partial}_{+}c\bar{\partial}_{-}c
\\ ~=~&\frac{(\gamma+1)\bar{\partial}_{+}c\bar{\partial}_{+}c}{2(\gamma-1)\cos^2A}+\frac{(\gamma+1)-2\sin^2 2A}{2(\gamma-1)\cos^2 A}
   \bar{\partial}_{-}c\bar{\partial}_{+}c.
\end{aligned}
\end{equation}

From (\ref{tau}), we also have
\begin{equation}
\begin{aligned}
&(\sin\beta+\sin\alpha)-(\sin\alpha+\sin\beta)\cos2 A -\cos\beta\sin2A\\
=&\sin\beta-\sin\alpha\cos2 A=-\cos \alpha\sin 2A
\end{aligned}
\end{equation}
and
\begin{equation}
\begin{aligned}
&(\sin\beta+\sin\alpha)-(\sin\alpha+\sin\beta)\cos2 A +\cos\beta\sin2A\cos 2A\\~=~&
(\sin\beta+\sin\alpha)+\cos 2A(\cos\beta\sin2A-\sin\alpha-\sin\beta)
\\~=~&
(\sin\beta+\sin\alpha)+\cos 2A(-\sin\beta\cos2A-\sin\beta)
\\~=~& \sin\beta\sin^2 2A+ \sin 2A \cos \beta ~=~(\sin\beta+\sin\alpha)\sin^2 2A +\sin 2A \cos \alpha\cos 2A.
\end{aligned}
\end{equation}
Thus, we have
\begin{equation}
\begin{aligned}
&\omega\cos\alpha\sin^2A\tan A\bar{\partial}_{-}c
\\&-\frac{\omega\sin^2A\tan A}{\sin2 A}\Big[(\sin\beta+\sin\alpha)-(\sin\alpha+\sin\beta)\cos2 A -\cos\beta\sin2A\Big]\bar{\partial}_{+}c\\~=~&\omega\cos\alpha\sin^2A\tan A(\bar{\partial}_{-}c+\bar{\partial}_{+}c)
\\~=~&\omega\cos\sigma\cos A\sin^2A\tan A(\bar{\partial}_{-}c+\bar{\partial}_{+}c)-\omega\sin\sigma\sin A\sin^2A\tan A(\bar{\partial}_{-}c+\bar{\partial}_{+}c)
\end{aligned}
\end{equation}
and
\begin{equation}
\begin{aligned}
&-\cos\alpha\tan A\cos 2A j\bar{\partial}_{+}s\bar{\partial}_{-}c
\\~&-\frac{j\tan A\bar{\partial}_{+}s}{\sin2 A}\Big[(\sin\beta+\sin\alpha)-(\sin\alpha+\sin\beta)\cos2 A +\cos\beta\sin2A\cos 2A\Big]\bar{\partial}_{+}c\\~=~&-\cos\alpha\tan A\cos 2A j\bar{\partial}_{+}s(\bar{\partial}_{-}c+\bar{\partial}_{+}c)
-\tan A\sin2 A(\sin\beta+\sin\alpha)j\bar{\partial}_{+}s\bar{\partial}_{+}c\\~=~&
-\cos\sigma\sin A\cos 2A j\bar{\partial}_{+}s(\bar{\partial}_{-}c+\bar{\partial}_{+}c)+\sin\sigma\sin A\tan A\cos 2A j\bar{\partial}_{+}s(\bar{\partial}_{-}c+\bar{\partial}_{+}c)\\&-\tan A\sin2 A(\sin\beta+\sin\alpha)j\bar{\partial}_{+}s\bar{\partial}_{+}c.
\end{aligned}
\end{equation}

{\it Part 2.} Using (\ref{192306}) and (\ref{192308}), we have
\begin{equation}
\begin{aligned}
&-\frac{c\omega\cos\sigma\sin A}{\kappa\sin2A}\Big[ \cos2 A\bar{\partial}_{+}\beta- \bar{\partial}_{-}\alpha+\bar{\partial}_{+}\beta-\cos2 A \bar{\partial}_{-}\alpha\Big]\\~=~&-\frac{(1+\kappa)\omega\cos\sigma\sin A\tan A}{\kappa\sin2A}\big[ -\cos2 A\bar{\partial}_{+}c- \bar{\partial}_{-}c-\bar{\partial}_{+}c-\cos2 A \bar{\partial}_{-}c\big]\\~=~&
\frac{(1+\kappa)\omega\cos\sigma\sin A}{\kappa}(\bar{\partial}_{-}c+\bar{\partial}_{+}c).
\end{aligned}
\end{equation}

{\it Part 3.} Using (\ref{4801}), we have
\begin{equation}\label{32404}
\begin{aligned}
&-\frac{c}{\kappa}\cos \sigma\sin A\big(\bar{\partial}_{-}\omega+
\bar{\partial}_{+}\omega\big)\\~=~&
-\frac{2c}{\kappa}\cos \sigma\sin A\cos A\bar{\partial}_0\omega\\~=~&
-\frac{2c}{\kappa}\cos \sigma\sin A\cos A\left[\frac{\kappa}{2c\cos A}(\bar{\partial}_{+}c+\bar{\partial}_{-}c)\omega-\frac{j\bar{\partial}_{+}s}{c\cos A}(\bar{\partial}_{+}c+\bar{\partial}_{-}c)\right]
\\~=~&-\cos \sigma\sin A(\bar{\partial}_{+}c+\bar{\partial}_{-}c)\omega+\frac{2}{\kappa}\cos \sigma\sin A
(\bar{\partial}_{+}c+\bar{\partial}_{-}c)j\bar{\partial}_{+}s.
\end{aligned}
\end{equation}

By a direct computation, we have
\begin{equation}\label{4902}
\begin{aligned}
&(1+\kappa)\tan A+\Big(\frac{1+\kappa}{2}\Big)\Omega\sin 2A\\=&(1+\kappa)\tan A
+\Big(\frac{1+\kappa}{2}\Big)\Big(\frac{\kappa-1}{\kappa+1}-\tan^2 A\Big)\sin 2A=2\kappa \sin A \cos A
\end{aligned}
\end{equation}
and
$$
(1+\kappa)\tan A-\Big(\frac{1+\kappa}{2}\Big)\Omega\sin 2A=2(1+\kappa)\tan A-2\kappa \sin A \cos A.
$$
Therefore, by (\ref{192306})--(\ref{192309}) we have
\begin{equation}
\begin{aligned}
&-\frac{c\omega}{\kappa}\big[\bar{\partial}_{-}(\cos \sigma\sin A)+
\bar{\partial}_{+}(\cos \sigma\sin A)\big]\\~=~&
\frac{c\omega\sin\sigma\sin A}{\kappa}\big[\bar{\partial}_{-}\sigma+
\bar{\partial}_{+}\sigma\big]-\frac{c\omega\cos\sigma\cos A}{\kappa}\big[\bar{\partial}_{-}A+
\bar{\partial}_{+}A\big]
\\~=~&
\frac{c\omega\sin\sigma\sin A}{2\kappa}\big[\bar{\partial}_{-}(\alpha+\beta)+
\bar{\partial}_{+}(\alpha+\beta)\big]-\frac{c\omega\cos\sigma\cos A}{2\kappa}\big[\bar{\partial}_{-}(\alpha-\beta)+
\bar{\partial}_{+}(\alpha-\beta)\big]
\\~=~&
\frac{\omega\sin\sigma\sin A}{2\kappa}\Big[(1+\kappa)\tan A+\Big(\frac{1+\kappa}{2}\Big)\Omega\sin 2A\Big](\bar{\partial}_{-}c-\bar{\partial}_{+}c)\\&+\frac{\omega\sin\sigma\sin A(1+\cos2A)\tan A}{\kappa}j\bar{\partial}_{+}s\\&-
\frac{\omega\cos\sigma\cos A}{2\kappa}\Big[(1+\kappa)\tan A-\Big(\frac{1+\kappa}{2}\Big)\Omega\sin 2A\Big](\bar{\partial}_{-}c+\bar{\partial}_{+}c)
\\~=~&
\omega\sin\sigma\sin^2 A\cos A(\bar{\partial}_{-}c-\bar{\partial}_{+}c)+\frac{2\omega\sin\sigma\sin A\cos^2A\tan A}{\kappa}j\bar{\partial}_{+}s\\&+\omega\Big[-
\frac{(1+\kappa)\cos\sigma\sin A}{\kappa}+\cos\sigma\cos^2 A \sin A\Big](\bar{\partial}_{-}c+\bar{\partial}_{+}c)
\end{aligned}
\end{equation}

{\it Part 4.} By (\ref{comm}), (\ref{32402}), and (\ref{42001}) we have
\begin{equation}\label{42101}
\begin{aligned}
&\frac{c}{\kappa}\big[\bar{\partial}_{-}\big(j\sin \beta \bar{\partial}_{+}s\big)
+\bar{\partial}_{+}\big(j\sin \alpha \bar{\partial}_{-}s\big)\big]\\~=~&
\frac{c}{\kappa}\Big[\sin \beta\bar{\partial}_{-}j \bar{\partial}_{+}s+\sin \alpha\bar{\partial}_{+}j\bar{\partial}_{-}s
+j\cos\beta\bar{\partial}_{-}\beta\bar{\partial}_{+}s+j\cos\alpha\bar{\partial}_{+}\alpha\bar{\partial}_{-}s\\&
\quad +j\sin\beta\bar{\partial}_{-}\bar{\partial}_{+}s+j\sin\alpha\bar{\partial}_{+}\bar{\partial}_{-}s+\sin \alpha\bar{\partial}_{-}j\bar{\partial}_{+}s+\sin \alpha\bar{\partial}_{-}j\bar{\partial}_{-}s\Big]
\\~=~&-\frac{j}{\kappa}\sin\alpha(\bar{\partial}_{+}c+\bar{\partial}_{-}c)\bar{\partial}_{+}s+\frac{c}{\kappa}
(\sin\alpha+\sin\beta)\bar{\partial}_{-}j\bar{\partial}_{+}s\\&~+
\frac{c}{\kappa}\Big[j\cos\beta\bar{\partial}_{-}\beta\bar{\partial}_{+}s+j\cos\alpha\bar{\partial}_{+}
\alpha\bar{\partial}_{-}s\Big]+\frac{c}{\kappa}(\sin\alpha+\sin\beta)j\bar{\partial}_{-}\bar{\partial}_{+}s\\&
-\frac{cj\sin\alpha}{\kappa\sin2 A}\Big[(\cos2 A\bar{\partial}_{+}\beta- \bar{\partial}_{-}\alpha) \bar{\partial}_{-}s-(\bar{\partial}_{+}\beta-\cos2 A \bar{\partial}_{-}\alpha) \bar{\partial}_{+}s\Big]\\~=~&
-\frac{j}{\kappa}(\sin\sigma\cos A+\cos\sigma\sin A)(\bar{\partial}_{+}c+\bar{\partial}_{-}c)\bar{\partial}_{+}s+
\frac{c}{\kappa}(\sin\alpha+\sin\beta)\bar{\partial}_{-}\big(j\bar{\partial}_{+}s\big)\\&+
\frac{c}{\kappa}\Big[j\cos\beta\bar{\partial}_{-}\beta\bar{\partial}_{+}s+j\cos\alpha\bar{\partial}_{+}
\alpha\bar{\partial}_{-}s\Big]\\&-\frac{cj\sin\alpha}{\kappa\sin2 A}\Big[(\cos2 A\bar{\partial}_{+}\beta- \bar{\partial}_{-}\alpha) \bar{\partial}_{-}s-(\bar{\partial}_{+}\beta-\cos2 A \bar{\partial}_{-}\alpha) \bar{\partial}_{+}s\Big].
\end{aligned}
\end{equation}

Noticing the last parts of (\ref{192305}) and (\ref{42101}) and using (\ref{192306}) and (\ref{192308}), we have
\begin{equation}
\begin{aligned}
&\frac{cj}{\kappa\sin2 A}(\bar{\partial}_{+}\beta-\cos2 A \bar{\partial}_{-}\alpha) \bar{\partial}_{+}s\\=&\frac{j\bar{\partial}_{+}s}{\kappa\cos ^2 A}
\Bigg\{-\frac{1+\kappa}{2}(\bar{\partial}_{+}c+\cos 2A \bar{\partial}_{-}c)+(1-\cos 2A)\frac{\omega\sin^2 A}{2}+(1-\cos 2A)\frac{j\bar{\partial}_{+}s}{2}
\Bigg\}.
\end{aligned}
\end{equation}
Using  (\ref{192307}) and (\ref{192309}), we have
\begin{equation}\label{4101}
\begin{aligned}
&\frac{cj}{\kappa}\Big[\cos\beta\bar{\partial}_{-}\beta\bar{\partial}_{+}s+\cos\alpha\bar{\partial}_{+}
\alpha\bar{\partial}_{-}s\Big]\\~=~&\frac{cj\bar{\partial}_{+}s}{\kappa}\Big[
\cos\beta\bar{\partial}_{-}\beta-\cos\alpha\bar{\partial}_{+}
\alpha\Big]
\\~=~&\frac{cj\bar{\partial}_{+}s}{\kappa}\cos\sigma\cos A\big[
\bar{\partial}_{-}\beta-\bar{\partial}_{+}\alpha\big]+\frac{cj\bar{\partial}_{+}s}{\kappa}\sin\sigma\sin A\big[
\bar{\partial}_{-}\beta+\bar{\partial}_{+}
\alpha\big]\\~=~&
\frac{j\bar{\partial}_{+}s}{\kappa}\cos\sigma\cos A\Big(\frac{1+\kappa}{2}\Big)\Big(\frac{\kappa-1}{\kappa+1}-\tan^2 A\Big)\sin 2A\big[
\bar{\partial}_{-}c+\bar{\partial}_{+}c\big]\\&+
\frac{j\bar{\partial}_{+}s}{\kappa}\sin\sigma\sin A\Big(\frac{1+\kappa}{2}\Big)\Big(\frac{\kappa-1}{\kappa+1}-\tan^2 A\Big)\sin 2A\big[
\bar{\partial}_{-}c-\bar{\partial}_{+}c\big]\\&+\frac{j\bar{\partial}_{+}s}{\kappa}\sin\sigma\sin A\Big[-2\omega\sin^2A\tan A+2j\tan A \cos 2A \bar{\partial}_{+}s\Big]
\\~=~&
\left[\frac{(\kappa-1)j\bar{\partial}_{+}s}{\kappa}\cos\sigma\sin A\cos^2 A-\frac{(\kappa+1)j\bar{\partial}_{+}s}{\kappa}\cos\sigma\sin^3 A\right]\big(
\bar{\partial}_{-}c+\bar{\partial}_{+}c\big)\\&+
\frac{j\bar{\partial}_{+}s}{\kappa}\sin\sigma\sin A\Big(\frac{1+\kappa}{2}\Big)\Big(\frac{\kappa-1}{\kappa+1}-\tan^2 A\Big)\sin 2A\big[
\bar{\partial}_{-}c-\bar{\partial}_{+}c\big]\\&+\frac{j\bar{\partial}_{+}s}{\kappa}\sin\sigma\sin A\Big[-2\omega\sin^2A\tan A+2j\tan A \cos 2A \bar{\partial}_{+}s\Big].
\end{aligned}
\end{equation}


Therefore, combining with (\ref{192305})--(\ref{4101}) and using $\sin\alpha+\sin\beta=2\sin\sigma\cos A$, we can get the first equation of (\ref{cd}). (Remark: all the parts that contain $\cos\sigma$ can be eliminated.)

The proof for the other is similar; we omit the details. We then have Proposition \ref{10604}.
\end{proof}

In terms of the variables $R_{\pm}$ defined in (\ref{1973101}),
system (\ref{cd}) can be written as
\begin{equation}\label{cd6}
\left\{
  \begin{array}{ll}
  \begin{array}{rcl}
 \displaystyle c\bar{\partial}_{-}R_{+}&=&\displaystyle
   \frac{(\gamma+1)R_{+}^2}{2(\gamma-1)\cos^2A}+\frac{(\gamma+1)-2\sin^2 2A}{2(\gamma-1)\cos^2 A}
   R_{+}R_{-}\\[14pt]&&\displaystyle
+(\widetilde{H}_{11}R_{-}+\widetilde{H}_{12}R_{+})\omega\sin^2A+(\widetilde{H}_{13}R_{-}+\widetilde{H}_{14}R_{+})
j\bar{\partial}_{+}s\\
[8pt]&&\displaystyle +\widetilde{H}_{15}(j\bar{\partial}_{+}s)^2+ \widetilde{H}_{16}\omega\sin^2Aj\bar{\partial}_{+}s,
\end{array}
\\[38pt]
\begin{array}{rcl}
   \displaystyle c\bar{\partial}_{+}R_{-}&=&\displaystyle
\frac{(\gamma+1)R_{-}^2}{2(\gamma-1)\cos^2A}+\frac{(\gamma+1)-2\sin^2 2A}{2(\gamma-1)\cos^2 A}
   R_{-}R_{+}\\[14pt]&&\displaystyle
+(\widetilde{H}_{21}R_{-}+\widetilde{H}_{22}R_{+})\omega\sin^2A+(\widetilde{H}_{23}R_{-}+\widetilde{H}_{24}R_{+})
j\bar{\partial}_{+}s\\
[8pt]&&\displaystyle +\widetilde{H}_{25}(j\bar{\partial}_{+}s)^2+ \widetilde{H}_{26}\omega\sin^2Aj\bar{\partial}_{+}s,
\end{array}
  \end{array}
\right.
\end{equation}
where $\widetilde{H}_{ij}=\frac{\widetilde{f_{ij}}}{\cos ^2 A}$, and $\widetilde{f_{ij}}$ are polynomial forms in terms of $\sin A$.

Define
\begin{equation}
\delta_1:=\frac{\omega}{\rho}\quad \mbox{and}\quad \delta_2:=\frac{\bar{\partial}_{+}s}{c^{\frac{\gamma+1}{\gamma-1}}}.
\end{equation}
Then we have
$$
\omega\sin^2A=\frac{ \delta_1 c^{\frac{2\gamma}{\gamma-1}}}{q^2(\gamma s)^{\frac{1}{\gamma-1}}}\quad\mbox{and}\quad
j\bar{\partial}_{+}s=\frac{\delta_2 c^{\frac{2\gamma}{\gamma-1}}}{\gamma(\gamma-1)s}.
$$
Thus, (\ref{cd6}) can be written as
\begin{equation}\label{cd8}
\left\{
  \begin{array}{ll}
  \begin{array}{rcl}
 \displaystyle c\bar{\partial}_{-}R_{+}&=&\displaystyle
   \frac{(\gamma+1)R_{+}^2}{2(\gamma-1)\cos^2A}+\frac{(\gamma+1)-2\sin^2 2A}{2(\gamma-1)\cos^2 A}
   R_{+}R_{-}\\[12pt]&&\displaystyle
+(\widetilde{H}_{11}R_{-}+\widetilde{H}_{12}R_{+})
\frac{ \delta_1 c^{\frac{2\gamma}{\gamma-1}}}{q^2(\gamma s)^{\frac{1}{\gamma-1}}}+(\widetilde{H}_{13}R_{-}+\widetilde{H}_{14}R_{+})
\frac{\delta_2 c^{\frac{2\gamma}{\gamma-1}}}{\gamma(\gamma-1)s}\\
[8pt]&&\displaystyle +\widetilde{H}_{15}
\frac{\delta_2^2 c^{\frac{4\gamma}{\gamma-1}}}{\gamma^2(\gamma-1)^2s^2}+ \widetilde{H}_{16}\frac{\delta_1
\delta_2 c^{\frac{4\gamma}{\gamma-1}}}{\gamma(\gamma-1)sq^2(\gamma s)^{\frac{1}{\gamma-1}}},
\end{array}
\\[56pt]
\begin{array}{rcl}
   \displaystyle c\bar{\partial}_{+}R_{-}&=&\displaystyle
\frac{(\gamma+1)R_{-}^2}{2(\gamma-1)\cos^2A}+\frac{(\gamma+1)-2\sin^2 2A}{2(\gamma-1)\cos^2 A}
   R_{-}R_{+}\\[12pt]&&\displaystyle
+(\widetilde{H}_{21}R_{-}+\widetilde{H}_{22}R_{+})\frac{\delta_1 c^{\frac{2\gamma}{\gamma-1}}}{q^2(\gamma s)^{\frac{1}{\gamma-1}}}+(\widetilde{H}_{23}R_{-}+\widetilde{H}_{24}R_{+})
\frac{\delta_2 c^{\frac{2\gamma}{\gamma-1}}}{\gamma(\gamma-1)s}\\
[8pt]&&\displaystyle +\widetilde{H}_{25}
\frac{\delta_2^2 c^{\frac{4\gamma}{\gamma-1}}}{\gamma^2(\gamma-1)^2s^2}+ \widetilde{H}_{26}\frac{\delta_1
\delta_2 c^{\frac{4\gamma}{\gamma-1}}}{\gamma(\gamma-1)sq^2(\gamma s)^{\frac{1}{\gamma-1}}}.
\end{array}
  \end{array}
\right.
\end{equation}

\begin{prop}
For any constant $\nu>0$, we have the decompositions:
\begin{equation}\label{cd10}
\left\{
  \begin{array}{ll}
  \begin{array}{rcl}
 \displaystyle c\bar{\partial}_{-}\Big(\frac{R_{+}}{c^{\nu}}\Big)&=&\displaystyle
  \frac{1}{c^{\nu}}\Bigg\{ \frac{(\gamma+1)R_{+}^2}{2(\gamma-1)\cos^2A}+\frac{(\gamma+1)-2\sin^2 2A}{2(\gamma-1)\cos^2 A}
   R_{+}R_{-}-\nu R_{+}R_{-}\\[14pt]&&\displaystyle
+(\widehat{H}_{11}R_{-}+\widehat{H}_{12}R_{+})\frac{\delta_1 c^{\frac{2\gamma}{\gamma-1}}}{q^2(\gamma s)^{\frac{1}{\gamma-1}}}+(\widehat{H}_{13}R_{-}+\widehat{H}_{14}R_{+})
\frac{\delta_2 c^{\frac{2\gamma}{\gamma-1}}}{\gamma(\gamma-1)s}\\
[8pt]&&\displaystyle+\widehat{H}_{15}
\frac{\delta_2^2 c^{\frac{4\gamma}{\gamma-1}}}{\gamma^2(\gamma-1)^2s^2}+ \widehat{H}_{16}\frac{\delta_1
\delta_2 c^{\frac{4\gamma}{\gamma-1}}}{\gamma(\gamma-1)sq^2(\gamma s)^{\frac{1}{\gamma-1}}}\Bigg\},
\end{array}
\\[56pt]
\begin{array}{rcl}
   \displaystyle c\bar{\partial}_{+}\Big(\frac{R_{-}}{c^{\nu}}\Big)&=&\displaystyle
\frac{1}{c^{\nu}}\Bigg\{\frac{(\gamma+1)R_{-}^2}{2(\gamma-1)\cos^2A}+\frac{(\gamma+1)-2\sin^2 2A}{2(\gamma-1)\cos^2 A}
   R_{-}R_{+}-\nu R_{+}R_{-}\\[14pt]&&\displaystyle
+(\widehat{H}_{21}R_{-}+\widehat{H}_{22}R_{+})\frac{\delta_1 c^{\frac{2\gamma}{\gamma-1}}}{q^2(\gamma s)^{\frac{1}{\gamma-1}}}+(\widehat{H}_{23}R_{-}+\widehat{H}_{24}R_{+})
\frac{\delta_2 c^{\frac{2\gamma}{\gamma-1}}}{\gamma(\gamma-1)s}\\
[8pt]&&\displaystyle+\widehat{H}_{25}
\frac{\delta_2^2 c^{\frac{4\gamma}{\gamma-1}}}{\gamma^2(\gamma-1)^2s^2}+ \widehat{H}_{26}\frac{\delta_1
\delta_2 c^{\frac{4\gamma}{\gamma-1}}}{\gamma(\gamma-1)sq^2(\gamma s)^{\frac{1}{\gamma-1}}}\Bigg\},
\end{array}
  \end{array}
\right.
\end{equation}
where $\widehat{H}_{ij}=\frac{\widehat{f_{ij}}}{\cos ^2 A}$, and $\widehat{f_{ij}}$ are polynomial forms in terms of $\sin A$.
\end{prop}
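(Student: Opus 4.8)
The plan is to obtain (\ref{cd10}) directly from (\ref{cd8}) by a Leibniz-type computation, since $R_{+}/c^{\nu}$ differs from $R_{+}$ only by the scalar factor $c^{-\nu}$; no new characteristic identity beyond those already established is needed.

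First I would apply the quotient rule to each left-hand side. Writing $\bar{\partial}_{-}(R_{+}c^{-\nu})=c^{-\nu}\bar{\partial}_{-}R_{+}-\nu c^{-\nu-1}R_{+}\bar{\partial}_{-}c$ and multiplying by $c$ gives
$$c\bar{\partial}_{-}\Big(\frac{R_{+}}{c^{\nu}}\Big)=\frac{1}{c^{\nu}}\Big(c\bar{\partial}_{-}R_{+}-\nu R_{+}\bar{\partial}_{-}c\Big),$$
and symmetrically $c\bar{\partial}_{+}(R_{-}c^{-\nu})=c^{-\nu}\big(c\bar{\partial}_{+}R_{-}-\nu R_{-}\bar{\partial}_{+}c\big)$. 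The factors $c\bar{\partial}_{-}R_{+}$ and $c\bar{\partial}_{+}R_{-}$ are supplied verbatim by (\ref{cd8}), so the only new ingredient is to rewrite the correction terms $-\nu R_{+}\bar{\partial}_{-}c$ and $-\nu R_{-}\bar{\partial}_{+}c$ in the variables appearing on the right of (\ref{cd10}).

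Here I would use the definition (\ref{1973101}) of $R_{\pm}$ together with $\bar{\partial}_{-}s=-\bar{\partial}_{+}s$ from (\ref{42001}). From $R_{-}=\bar{\partial}_{-}c-\tfrac{j\bar{\partial}_{-}s}{\kappa}$ one gets $\bar{\partial}_{-}c=R_{-}-\tfrac{j\bar{\partial}_{+}s}{\kappa}$, and likewise $\bar{\partial}_{+}c=R_{+}+\tfrac{j\bar{\partial}_{+}s}{\kappa}$. Substituting, the correction in the first equation becomes
$$-\nu R_{+}\bar{\partial}_{-}c=-\nu R_{+}R_{-}+\frac{\nu}{\kappa}R_{+}\,j\bar{\partial}_{+}s,$$
and in the second $-\nu R_{-}\bar{\partial}_{+}c=-\nu R_{+}R_{-}-\tfrac{\nu}{\kappa}R_{-}\,j\bar{\partial}_{+}s$. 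The term $-\nu R_{+}R_{-}$ is exactly the new quadratic term displayed in (\ref{cd10}), while the remaining $\pm\tfrac{\nu}{\kappa}R_{\pm}\,j\bar{\partial}_{+}s$ is linear in $R_{\pm}$; after writing $j\bar{\partial}_{+}s=\tfrac{\delta_2 c^{2\gamma/(\gamma-1)}}{\gamma(\gamma-1)s}$ as in (\ref{cd8}), it has precisely the shape of the $(\widehat{H}_{13}R_{-}+\widehat{H}_{14}R_{+})$ and $(\widehat{H}_{23}R_{-}+\widehat{H}_{24}R_{+})$ terms and is therefore absorbed into them.

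Finally I would verify that the coefficient structure $\widehat{H}_{ij}=\widehat{f}_{ij}/\cos^{2}A$ with $\widehat{f}_{ij}$ polynomial in $\sin A$ is preserved. This holds because each $\widehat{H}_{ij}$ equals the corresponding $\widetilde{H}_{ij}$ of (\ref{cd8}) plus, only for $\widehat{H}_{14}$ and $\widehat{H}_{23}$, the constant $\pm\nu/\kappa$; since a constant can be written as $(\text{const})\cos^{2}A/\cos^{2}A$ and $\cos^{2}A=1-\sin^{2}A$, the numerators remain polynomials in $\sin A$. No step presents a genuine obstacle; the only care required is the bookkeeping that folds the $\nu$-correction into the existing linear coefficients without destroying their form, and the second decomposition is entirely symmetric to the first.
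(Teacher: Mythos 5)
Your proposal is correct and is precisely the computation the paper intends: its own proof of this proposition merely states that the decompositions follow directly from (\ref{cd8}) and omits the details, which are exactly your product-rule step $c\bar{\partial}_{\mp}(R_{\pm}c^{-\nu})=c^{-\nu}(c\bar{\partial}_{\mp}R_{\pm}-\nu R_{\pm}\bar{\partial}_{\mp}c)$ followed by the substitution $\bar{\partial}_{\mp}c=R_{\mp}\mp\frac{j\bar{\partial}_{+}s}{\kappa}$ and the absorption of the constants $\pm\nu/\kappa$ into $\widehat{H}_{14}$ and $\widehat{H}_{23}$. No discrepancy with the paper's approach.
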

\begin{proof}
The characteristic decompositions (\ref{cd6}) can be obtained directly by (\ref{cd8}). We omit the details.
\end{proof}

\section{\bf Global supersonic flows in the semi-infinite divergent duct}

\subsection{Simple waves adjacent to the constant state $(u_0, 0, \rho_0, s_0)$}
If the incoming supersonic flow is a uniform flow $(u_0, 0, \rho_0, s_0)$, then by the result of Courant and Friedrichs \cite{CF} (see Chapter IV.B) we know that there is a simple wave $\it S_{+}$  ($\it S_{-}$, resp.) with straight $C_{+}$  ($C_{-}$, resp.) characteristics issuing from lower wall $W_{-}$ (upper wall $W_{+}$, resp.); see Figure \ref{Fig25} (right).

Let
$$
A_0=\arcsin\frac{c_0}{u_0}, \quad c_*^2=\mu^2u_0^2+(1-\mu^2)c_0^2
,\quad \theta_*=A_0-\frac{\pi}{2}+\frac{1}{\mu}\arctan\big(\mu\sqrt{(u_0/c_0)^2-1}\big),$$
and
$$
\left\{
  \begin{array}{ll}
  \displaystyle\frac{\bar{u}(\theta)}{c_{*}}=\cos\mu(\theta-\theta_*)\cos \theta+\mu^{-1}\sin\mu (\theta-\theta_*)\sin \theta,\\[10pt]
\displaystyle\frac{\bar{v}(\theta)}{c_*}=\cos\mu (\theta-\theta_*)\sin \theta-\mu^{-1}\sin\mu (\theta-\theta_*)\cos \theta.
  \end{array}
\right.
$$
Then $u=\bar{u}(\theta)$, $v=\bar{v}(\theta)$ ($\theta_*-\frac{\pi}{2\mu}<\theta<\theta_*$) represents an epicycloidal characteristic on the $(u, v)$-plane; see Figure \ref{Fig25} (left).
Moreover, by a direct computation we have that along the epicycloidal characteristic,
$$
\begin{aligned}
&c=\bar{c}(\theta):=c_*\cos\mu(\theta-\theta_*),\quad q=\bar{q}(\theta):=c_*\sqrt{\cos^2\mu(\theta-\theta_*)+\mu^{-2}\sin^2\mu(\theta-\theta_*)}~ \\&\alpha=\bar{\alpha}(\theta):=\theta+\frac{\pi}{2},\quad A=\bar{A}(\theta):=\arcsin\left(\frac{\bar{c}(\theta)}{\bar{q}(\theta)}\right),\quad
\sigma=\bar{\sigma}(\theta):=\bar{\alpha}(\theta)-\bar{A}(\theta).\\ & \beta=\bar{\beta}(\theta):=\bar{\sigma}(\theta)-\bar{A}(\theta).\quad
\end{aligned}
$$
Let $\theta=\bar{\theta}(\sigma)$ be the inverse function of $\sigma=\bar{\sigma}(\theta)$ and let $\tilde{\theta}(\xi)=
\bar{\theta}\big(-\arctan f'(\xi)\big)$ ($\xi\geq 0$).
Then,
$$
\begin{aligned}
&\qquad (u, v, c)(x, y)=(\bar{u}, \bar{v}, \bar{c})\big(\tilde{\theta}(\xi)\big), \quad s(x,y)=s_0,\\[4pt]
&x=\xi+r\cos \bar{\alpha}\big(\tilde{\theta}(\xi)\big), \quad y=-f(\xi)+r\sin\bar{\alpha}\big(\tilde{\theta}(\xi)\big), \quad \xi\geq 0, \quad r\geq 0,
\end{aligned}
$$
is a simple wave with straight $C_{+}$ characteristic lines issuing from $W_{-}$.
By symmetry, we can construct the simple wave with straight $C_{-}$ characteristic lines issuing from $W_{+}$.
These results are classical, so we omit the details.

The two simple waves start to interact from a point $\bar{P}=(f(0)\cot A_0, 0)$.
Through $\bar{P}$ draw a forward $C_{-}$ ($C_{+}$, resp.) cross characteristic curve $C_{-}^{\bar{P}}$ ($C_{+}^{\bar{P}}$, resp.) in
$\Delta_{+}$ ($\Delta_{-}$, resp.).
In the present paper, $(u_0, 0, \rho_0, s_0)$ and the duct $\Sigma$ are required to satisfy that the characteristic curve $C_{-}^{\bar{P}}$ ($C_{+}^{\bar{P}}$, resp.) meets the lower wall $W_{-}$ (upper wall $W_{+}$, resp.) at a point $\bar{B}_0$ ($\bar{D}_0$, resp.) and $c(\bar{B}_0)>0$, as shown in Figure \ref{Fig25}(right).

\begin{rem}
It is obvious that if $f(x)\equiv 0$ then this case can happen.
So, by studying an initial value problem for an ordinary differential equation, one can find a $\eta>0$ depending only on $(u_0, 0, \rho_0, s_0)$ and $f(0)$,
 such that if $0<f''(x)<\eta$ as $x\geq 0$ then the above situation can occur.
\end{rem}

From (\ref{192306})--(\ref{192309}) and (\ref{4902}), we have
\begin{equation}\label{42201}
\begin{aligned}
\bar{\partial}_{0}\sigma&~=~\Big(\frac{\bar{\partial}_{+}+\bar{\partial}_{-}}{2\cos A}\Big)\Big(\frac{\alpha+\beta}{2}\Big)~=~\frac{1}{4c\cos A}(c\bar{\partial}_{+}+c\bar{\partial}_{-})(\alpha+\beta)\\[10pt]&~=
\frac{\bar{\partial}_{-}c-\bar{\partial}_{+}c}{(\gamma-1)q}+\frac{j}{q}\bar{\partial}_{+}s
~=~\frac{R_{-}-R_{+}}{(\gamma-1)q} \qquad \mbox{on}\quad W_{-}.
\end{aligned}
\end{equation}

Since simple waves are isentropic irrotational, the simple wave $\it S_{+}$ satisfies
$$
\begin{aligned}
\bar{\partial}_{-}c&=(\gamma-1)q \bar{\partial}_{0}\sigma
=-\frac{(\gamma-1)q f''(x)}{\big(1+[f'(x)]^2\big)^{\frac{3}{2}}}<0\quad \mbox{on}\quad W_{-}
\end{aligned}
$$
and
$$
 c\bar{\partial}_{+}\bar{\partial}_{-}c=
\frac{(\gamma+1)(\bar{\partial}_{-}c)^2 }{2(\gamma-1)\cos^2A}.
$$
Thus, the simple wave solution $\it S_{+}$ satisfies
\begin{equation}
\begin{aligned}
&\quad\sup\limits_{\widehat{\bar{P}\bar{B}_0}}A=A_0,~\quad\sup\limits_{\widehat{\bar{P}\bar{B}_0}}c=c_0, ~\quad~ \inf\limits_{\widehat{\bar{P}\bar{B}_0}}c=c(\bar{B}_0)>0,\\&  \inf\limits_{\widehat{\bar{P}\bar{B}_0}}q=u_0,~\quad
 \sup\limits_{\widehat{\bar{P}\bar{B}_0}}\bar{\partial}_{-}c
<0,~\quad\mbox{and}\quad
\inf\limits_{\widehat{\bar{P}\bar{B}_0}}\bar{\partial}_{-}c>-\infty.
\end{aligned}
\end{equation}
By symmetry, the simple wave solution $\it S_{-}$ satisfies
\begin{equation}
\begin{aligned}
&\quad\sup\limits_{\widehat{\bar{P}\bar{D}_0}}A=A_0,~\quad~\sup\limits_{\widehat{\bar{P}\bar{D}_0}}c=c_0, ~\quad~ \inf\limits_{\widehat{\bar{P}\bar{D}_0}}c=c(\bar{D}_0)>0,\\&  \inf\limits_{\widehat{\bar{P}\bar{D}_0}}q=u_0,~\quad
 \sup\limits_{\widehat{\bar{P}\bar{D}_0}}\bar{\partial}_{+}c
<0,~\quad\mbox{and}\quad
\inf\limits_{\widehat{\bar{P}\bar{D}_0}}\bar{\partial}_{+}c>-\infty.
\end{aligned}
\end{equation}
Therefore, we can define the following constants:
\begin{equation}
\overline{c}_m:=c(\bar{B}_0), \quad
\overline{m}_1:=-\sup\limits_{\widehat{\bar{P}\bar{B}_0}}\bar{\partial}_{-}c,
\quad \mbox{and}\quad \overline{M}_1:
=-\inf\limits_{\widehat{\bar{P}\bar{B}_0}}\bar{\partial}_{-}c.
\end{equation}
It is obvious that these constants depend only on the state $(u_0, 0, \rho_0, s_0)$ and the duct $\Sigma$.

\subsection{Some useful constants}
We shall define some constants that depend only on the state $(u_0, 0, \rho_0, s_0)$ and the duct $\Sigma$. These constants will be extensively used in establishing the a priori $C^1$ norm estimates of the solution.

Let $c_m$ be a constant in $(0, \frac{\overline{c}_m}{2}]$ such that when $c<c_m$ there holds
\begin{equation}\label{72602}
\frac{\gamma+1}{(\gamma-1)}\left(1-\frac{c^2}{2\big(\frac{\widehat{E}_0}{2}-\frac{c^2}{\gamma-1}\big)}\right)^{-1}
-\frac{2\gamma}{\gamma-1}<-\frac{1}{2},
\end{equation}
where the constant $$\widehat{E}_0:=\frac{u_0^2}{2}+\frac{c_0^2}{\gamma-1}.$$
We then define the following constants:
\begin{equation}\label{1972605}
\begin{aligned}
&M_1:=2\overline{M}_1,\quad
m_1:=\frac{\overline{m}_1}{2},\quad A_M=\frac{A_0}{2}+\frac{\pi}{4},\quad c_M:=2c_0,\quad
s_m:=\frac{s_0}{2},\quad
s_M:=2s_0,\\[4pt]
& q_m:=\frac{u_0}{2},\quad
\nu_1:=\frac{\gamma+1}{(\gamma-1)\cos^2 A_M}+1,\quad m_2:=\frac{m_1}{c_{M}^{\nu_1}},\quad
m:=\frac{1}{2}m_2 c_m^{\nu_1-\frac{2\gamma}{\gamma-1}},\\[4pt]&
\mathcal{B}:=1+\max\Bigg\{\frac{\gamma^{\frac{1}{\gamma-1}}s_m^{\frac{2-\gamma}{\gamma-1}}c_M^2 }{\gamma(\gamma-1)},~ \frac{\gamma^{\frac{1}{\gamma-1}}s_M^{\frac{2-\gamma}{\gamma-1}}c_M^2 }{\gamma(\gamma-1)}\Bigg\}, \\[6pt]&
\widehat{\mathcal{H}}:=\max_{\substack{A\in[0, A_{M}]\\  i=1,2; j=1,\cdot\cdot\cdot, 6}}\Big\{|\widehat{H_{ij}}|\Big\},~ \widetilde{\mathcal{H}}:=\max_{\substack{A\in[0, A_M]\\  i=1,2; j=1,\cdot\cdot\cdot, 6}}\Big\{|\widetilde{H_{ij}}|\Big\},
\\[4pt]&
\mathcal{N}:=\max\left\{
\frac{1}{q_m^2(\gamma s_m)^{\frac{1}{\gamma-1}}},~
\frac{1}{\gamma(\gamma-1)s_m},~
\frac{1}{\gamma^2(\gamma-1)^2s_m^2},~ \frac{1}{\gamma(\gamma-1)s_mq_m^2(\gamma s_m)^{\frac{1}{\gamma-1}}}\right\},
\\[8pt]&
 \widehat{\mathcal{M}}:=4\mathcal{B}\mathcal{N}(\widehat{\mathcal{H}}+1),\quad
 \widetilde{\mathcal{M}}:=4\mathcal{B}\mathcal{N}(\widetilde{\mathcal{H}}+1),
\\[4pt]&
\varepsilon_0:=\min\left\{m\gamma s_m,~\frac{m}{2(\widehat{\mathcal{M}}+1)},~ \frac{(\kappa-1)m}{2\widehat{\mathcal{M}}}, ~\frac{m}{3(\widetilde{\mathcal{M}}+1)}\right\}.
\end{aligned}
\end{equation}

\subsection{Flow in domains $\Sigma_{0}^{+}$ and $\Sigma_{0}^{-}$}
We are going to construct a global continuous and piecewise smooth supersonic solution to the problem (\ref{PSEU}), (\ref{bd1}).

 Through $B$ draw a forward $C_{+}$ characteristic curve $y=y_{+}^{B}(x)$ which can be determined by
$$\frac{{\rm d}y_{+}^{B}}{{\rm d}x}=\lambda_{+}\big(u_{in}(y_{+}^{B}), 0, c_{in}(y_{+}^{B})\big), \quad y_{+}^{B}(0)=-f(0).$$
 Through $D$ draw a forward $C_{-}$ characteristic curve  $y=y_{-}^{D}(x)$ which can be determined by
 $$\frac{{\rm d}y_{-}^{D}}{{\rm d}x}=\lambda_{-}\big(u_{in}(y_{-}^{D}), 0, c_{in}(y_{-}^{D})\big), \quad y_{-}^{D}(0)=f(0).$$
When $\epsilon$ is small the two curves intersect at some point $P=(x_{P}, 0)$ on the $x-$axis. It is easy to see that the flow in the region bounded by $\widehat{BP}$, $\widehat{DP}$, and $x=0$ is $(u, v, \rho, s)(x, y) =(u_{in}, v_{in}, \rho_{in}, s_{in})(y)$. Moreover, we have
\begin{equation}\label{72604}
|\delta_1(x, y)|=\Big|\frac{u_{in}'(y)}{\rho_{in}(y)}\Big|\leq \varepsilon\quad \mbox{and} \quad  |\delta_2(x, y)|=\Bigg|\frac{\cos\alpha_{in}(y) s_{in}'(y)}{c_{in}^{\small\frac{\gamma+1}{\gamma-1}}(y)}\Bigg|\leq\varepsilon\quad \mbox{on}\quad \widehat{BP}\cup\widehat{DP},
\end{equation}
where $\alpha_{in}(y)=\arctan \Big(\lambda_{+}\big(u_{in}(y), 0, c_{in}(y))\Big)$ and the constant
\begin{equation}\label{198101}
\varepsilon~:=~\max\left\{\sup\limits_{-f(0)\leq y\leq f(0)}\Big|\frac{u_{in}'(y)}{\rho_{in}(y)}\Big|,\quad
\sup\limits_{-f(0)\leq y\leq f(0)}\Bigg|\frac{s_{in}'(y)}{c_{in}^{\frac{\gamma+1}{\gamma-1}}(y)}\Bigg| \right\}.
\end{equation}

We next consider system (\ref{PSEU}) with the boundary data
\begin{equation}\label{72601}
 \left\{
   \begin{array}{ll}
     (u, v, \rho, s)=(u_{in}, v_{in}, \rho_{in}, s_{in})(y) & \hbox{on~~$\widehat{BP}$;} \\[4pt]
   (u, v)\cdot {\bf n_w}=0 & \hbox{on~~$W_{-}$.}
   \end{array}
 \right.
\end{equation}

By the classical results about the stability of classical solutions for quasilinear hyperbolic system, we can get the following lemma.
\begin{lem}
If $\epsilon$ is sufficiently small then the slip boundary problem (\ref{PSEU}), (\ref{72601}) admits a $C^1$ solution in a region $\Sigma_{0}^{-}$ bounded by $\widehat{BP}$, $W_{-}$, and $\widehat{PB_0}$, where $\widehat{PB_0}$ is a $C_{-}$ characteristic curve which passes through $P$ and ends up at a point $B_0$ on $W_{-}$; see Figure \ref{Fig2}. Moreover, the solution satisfies
\begin{equation}\label{72605}
\begin{aligned}
&|Ds(P)|=0, \quad |D\hat{E}(P)|=0, \quad \inf\limits_{\widehat{PB_0}}\bar{\partial}_{-}{c}\geq -M_1,\quad \sup\limits_{\widehat{PB_0}}\bar{\partial}_{-}{c}\leq -m_1,\\&\quad \sup\limits_{\widehat{PB_0}}A\leq A_{M}, \quad \inf\limits_{\widehat{PB_0}}c>\frac{\bar{c}_m}{2},\quad \inf\limits_{\widehat{PB_0}}\widehat{E}\geq \frac{\widehat{E}_0}{2},\quad \inf\limits_{\widehat{PB_0}}q\geq q_m,\\[4pt]&
\quad \sup\limits_{\widehat{PB_0}}c<c_{M}, \quad \sup\limits_{\widehat{PB_0}}|\delta_1|<\mathcal{B}\varepsilon, \quad \mbox{and}\quad \sup\limits_{\widehat{PB_0}}|\delta_2|\leq \varepsilon.
\end{aligned}
\end{equation}
\end{lem}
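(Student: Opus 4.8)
The plan is to treat the slip boundary problem (\ref{PSEU}), (\ref{72601}) as a small $C^1$ perturbation of the explicit simple wave $S_+$ that occupies this region when $\epsilon=0$, and to read off each item of (\ref{72605}) either from the simple wave itself or from the $\bar{\partial}_0$-identities and the characteristic decompositions of Section 2. For the existence of a $C^1$ solution I would invoke the standard stability theory for boundary value problems of quasilinear hyperbolic systems: since the flow is supersonic, (\ref{PSEU}) is strictly hyperbolic with the two wave families $C_\pm$ and the stream family $C_0$ (a double eigenvalue $v/u$), and prescribing the laminar trace on the $C_+$ characteristic $\widehat{BP}$ together with the slip condition on the non-characteristic wall $W_-$ is a well-posed mixed problem. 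Its domain of determinacy is exactly the region cut off by the forward $C_-$ characteristic $\widehat{PB_0}$ issuing from the top endpoint $P$, because for a point to the right of $\widehat{PB_0}$ the backward $C_-$ characteristic would have to leave $\widehat{BP}$ and reach data beyond $P$ that is not yet available. At the corner $B$ the data is compatible with the slip condition since $f'(0)=0$ and $v_{in}=0$ there. When $\epsilon=0$ this solution is the explicit $S_+$ on $\widehat{\bar{P}\bar{B}_0}$; for $\epsilon$ small the solution, the point $P$, and the curve $\widehat{PB_0}$ all depend continuously in $C^1$ on the data, giving existence on a region $\Sigma_0^-$ close to the unperturbed one.

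Next, the two equalities $|Ds(P)|=0$ and $|D\widehat{E}(P)|=0$ I would obtain directly from the laminar structure together with the symmetry assumption (A3): on $\widehat{BP}$ the solution is $(u_{in},v_{in},\rho_{in},s_{in})(y)$, so $s_x=\widehat{E}_x=0$ identically along it, while (A3) forces $s_{in}'(0)=\widehat{E}_{in}'(0)=0$, hence $s_y(P)=\widehat{E}_y(P)=0$ as well. The bounds on $\delta_1,\delta_2$ are the only genuinely new estimates, both vanishing in the isentropic irrotational base solution, and they follow from the transport identities: by (\ref{4105}) one has $\bar{\partial}_0\delta_2=0$, so $\delta_2$ is conserved along each stream characteristic and inherits its inlet bound $|\delta_2|\le\varepsilon$ from (\ref{72604}), giving $\sup_{\widehat{PB_0}}|\delta_2|\le\varepsilon$. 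Along the same streamline both $\delta_2$ and $s$ are constant, so integrating (\ref{72803}) for $\delta_1$ telescopes the source $\bar{\partial}_0c^2$ into $c^2-c_{in}^2$; estimating the integrating factor $(s\gamma)^{\frac{1}{\gamma-1}}/(\gamma(\gamma-1)s)$ by its maximum over $s\in[s_m,s_M]$ and using $c<c_M$ bounds the increment by $(\mathcal{B}-1)\varepsilon$, whence $|\delta_1|<\mathcal{B}\varepsilon$, exactly matching the definition of $\mathcal{B}$ in (\ref{1972605}).

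Finally, the remaining geometric and derivative bounds ($\sup A\le A_M$, $\bar{c}_m/2<c<c_M$, $\widehat{E}\ge\widehat{E}_0/2$, $q\ge q_m$, and $-M_1\le\bar{\partial}_-c\le-m_1$ on $\widehat{PB_0}$) I would establish by comparison with $S_+$: on $\widehat{\bar{P}\bar{B}_0}$ the simple wave satisfies each of these with the barred constants $A_0,c_0,\bar{c}_m,u_0,\overline{m}_1,\overline{M}_1$, and the constants in (\ref{1972605}) are precisely chosen as factor-of-two windows about those values ($A_M=\tfrac{A_0}{2}+\tfrac{\pi}{4}$, $c_M=2c_0$, $M_1=2\overline{M}_1$, $m_1=\tfrac{\overline{m}_1}{2}$, and so on), so continuous dependence on the $C^1$ data closes all of them once $\epsilon$ is small. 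The main obstacle is the control of $\bar{\partial}_-c$ through the Riccati-type decomposition (\ref{cd8}): in the $C_+$ direction $\bar{\partial}_-c$ obeys a quadratic equation whose leading part $c\bar{\partial}_+\bar{\partial}_-c=\frac{(\gamma+1)(\bar{\partial}_-c)^2}{2(\gamma-1)\cos^2A}$ can in principle force blow-up of the derivative, so one must verify that on the bounded region $\Sigma_0^-$ — where $c$ stays above $\bar{c}_m/2$, $\cos A$ stays above $\cos A_M>0$, and the entropy/vorticity source terms are $O(\varepsilon)$ — the perturbed $\bar{\partial}_-c$ remains trapped in $[-M_1,-m_1]$ before $\widehat{PB_0}$ reaches the wall at $B_0$. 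I expect this to be handled by a continuity (bootstrap) argument started from the unperturbed simple wave, in which $\bar{\partial}_-c\in[-\overline{M}_1,-\overline{m}_1]$ on all of $\widehat{\bar{P}\bar{B}_0}$, the $O(\varepsilon)$ source cannot push $\bar{\partial}_-c$ out of the doubled window over the finite characteristic length involved, and in particular $\sup_{\widehat{PB_0}}\bar{\partial}_-c\le-m_1<0$ preserves the monotonicity needed for the subsequent Goursat step.
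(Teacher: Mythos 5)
Your proposal is correct and follows essentially the same route as the paper's (sketched) proof: the paper likewise treats the solution in $\Sigma_0^-$ as a small perturbation of the explicit simple wave $S_+$, obtains $|Ds(P)|=|D\widehat{E}(P)|=0$ from the laminar data and symmetry, gets the $\delta_1,\delta_2$ bounds by integrating $\bar{\partial}_0 s=0$, (\ref{4105}) and (\ref{72803}) along streamlines (which is exactly your telescoping argument matching the definition of $\mathcal{B}$), and controls $\bar{\partial}_-c$ via the boundary relation (\ref{4203}) on $W_-$ together with the decompositions (\ref{cd}). The paper omits the details; your write-up supplies the same ingredients, including the correct identification of the Riccati term in (\ref{cd8}) as the only real obstruction and its harmlessness on this region where $c\geq\bar{c}_m/2$ and $\cos A\geq\cos A_M>0$.
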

\begin{proof}
By computation, we have
\begin{equation}\label{72606}
|\bar{\partial}_{+}c|=|\sin\alpha_{in}(y) c_{in}'(y)|\leq \epsilon \quad \mbox{on}\quad \widehat{BP}.
\end{equation}
From (\ref{42201})
 we have
\begin{equation}\label{4203}
\begin{aligned}
R_{-}-R_{+}&=(\gamma-1)q \bar{\partial}_{0}\sigma
=-\frac{(\gamma-1)q f''(x)}{\big(1+[f'(x)]^2\big)^{\frac{3}{2}}}\quad \mbox{on}\quad W_{-}.
\end{aligned}
\end{equation}

If $\epsilon$ is small then the solution to the problem (\ref{PSEU}), (\ref{72601}) in $\Sigma_{0}^{-}$ is actually a small perturbation of the simple wave $\it S_{+}$ constructed in Section 3.1.
The desired estimates (\ref{72605}) can be obtained by using the boundary conditions (\ref{72604}), (\ref{72606}), (\ref{4203}) and by integrating $\bar{\partial}_{0}s=0$, (\ref{4401}), (\ref{4105}), (\ref{72803}), and (\ref{cd}) along characteristic curves. We omit the details.
\end{proof}

By symmetry, we can get a flow in a region $\Sigma_{0}^{+}$ bounded by $\widehat{DP}$, $W_{+}$, and $\widehat{PD_0}$, where $\widehat{PD_0}$ is a $C_{+}$ characteristic curve which passes through $P$ and ends up at some point $D_0$ on $W_{+}$. Moreover, we have
\begin{equation}\label{1972602}
\begin{aligned}
&|Ds(P)|=0, \quad |D\hat{E}(P)|=0, \quad \inf\limits_{\widehat{PD_0}}\bar{\partial}_{+}{c}\geq -M_1,\quad \sup\limits_{\widehat{PD_0}}\bar{\partial}_{+}{c}\leq -m_1,\\&\quad \sup\limits_{\widehat{PD_0}}A\leq A_{M},\quad \inf\limits_{\widehat{PD_0}}c>\frac{1}{2}\bar{c}_m,\quad\inf\limits_{\widehat{PD_0}}\widehat{E}\geq \frac{1}{2}\widehat{E}_0,\quad \inf\limits_{\widehat{PD_0}}q\geq q_m,\\[4pt]&\quad
 \sup\limits_{\widehat{PD_0}}c<c_{M}, \quad \sup\limits_{\widehat{PD_0}}|\delta_1|<\mathcal{B}\varepsilon, \quad \mbox{and}\quad \sup\limits_{\widehat{PD_0}}|\delta_2|\leq \varepsilon.
\end{aligned}
\end{equation}

\begin{rem}\label{2032001}
See  Figure \ref{Fig2}. Although  $(\bar{\partial}_{+}u, \bar{\partial}_{+}v, \bar{\partial}_{+}c)$  is discontinuous across the $C_{-}$ characteristic curve $\widehat{DP}$, $(\bar{\partial}_{-}u, \bar{\partial}_{-}v, \bar{\partial}_{-}c)$ is  continuous across $\widehat{DP}$. So, by the second equation of (\ref{form}) we know that $\omega$ is continuous across $\widehat{DP}$, since (\ref{form}) holds on both sides of $\widehat{DP}$. Since $\bar{\partial}_{-}s$ is continuous across $\widehat{DP}$, by $\bar{\partial}_{-}s+\bar{\partial}_{+}s=0$ we know that $\bar{\partial}_{+}s$ is also continuous across $\widehat{DP}$.
Similarly, we know that $\omega$ and $\bar{\partial}_{\pm}s$ are continuous across $\widehat{BP}$.
In view of this fact, one can see that  $\omega$ and $\bar{\partial}_{\pm}s$ are actually continuous.
\end{rem}

\subsection{Flow in domain $\Sigma_{1}$}
The purpose of this part is to construct the solution in domain $\Sigma_{1}$, as shown in Figure \ref{Fig2}.
For this purpose, we consider system (\ref{PSEU}) with the boundary data
\begin{equation}\label{1972601}
(u, v, \rho, s)=\left\{
                  \begin{array}{ll}
                    \big(u_{_{\widehat{PB_0}}}, v_{_{\widehat{PB_0}}}, \rho_{_{\widehat{PB_0}}}, s_{_{\widehat{PB_0}}}\big)(x, y) & \hbox{on $\widehat{PB_0}$;} \\[4pt]
                    \big(u_{_{\widehat{PD_0}}}, v_{_{\widehat{PD_0}}}, \rho_{_{\widehat{PD_0}}}, s_{_{\widehat{PD_0}}}\big)(x, y) & \hbox{on $\widehat{PD_0}$,}
                  \end{array}
                \right.
\end{equation}
where $(u_{_{\widehat{PB_0}}}, v_{_{\widehat{PB_0}}}, \rho_{_{\widehat{PB_0}}}, s_{_{\widehat{PB_0}}})(x, y)$ and $(u_{_{\widehat{PD_0}}}, v_{_{\widehat{PD_0}}}, \rho_{_{\widehat{PD_0}}}, s_{_{\widehat{PD_0}}})(x, y)$ represent the state on the characteristic curves $\widehat{PB_0}$ and $\widehat{PD_0}$, respectively. The characteristic curves $\widehat{PB_0}$ and $\widehat{PD_0}$ and the data on them are obtained in the last subsection.

Problem (\ref{PSEU}), (\ref{1972601}) is a Goursat problem.
 By (\ref{72605}) and (\ref{1972602}), we have
$
\bar{\partial}_{+}\widehat{E}_{{\widehat{PD_0}}}(P)=\bar{\partial}_{-}\widehat{E}_{{\widehat{PB_0}}}(P)=0$ and
$\bar{\partial}_{+}s_{_{\widehat{PD_0}}}(P)=\bar{\partial}_{-}s_{_{\widehat{PB_0}}}(P)=0.
$
Hence, we can check that compatibility conditions are satisfied at $P$.
So, existence of a local $C^1$ solution is known by the method of characteristics, see for example \cite{Li-Yu}.
In order to extend the local solution to global solution, we need to establish the a priori $C^1$ norm estimate of the solution.
In what follows,
we first assume that the Goursat problem (\ref{PSEU}), (\ref{1972601}) admits a $C^1$ local solution in some region $\Sigma_{1, loc}$, and then establish $C^1$ norm estimate of the solution on $\Sigma_{1, loc}$.
Since the local solution is constructed by the method of characteristics, through any point $F$ in $\Sigma_{1, loc}$ we can draw a backward $C_{-}$ ($C_{+}$, resp.) characteristic curve up to some point $F_{-}$ ($F_{+}$, resp.) on $\widehat{PD_0}$ ($\widehat{PB_0}$, resp.), and the two backward characteristic curves do not interact with each other as they go back toward to $\widehat{PD_0}$ and $\widehat{PB_0}$. Let $\Lambda_{1, F}$ be a closed domain bounded by characteristic curves $\widehat{F_{-}F}$, $\widehat{F_{+}F}$, $\widehat{PF_{+}}$,  and $\widehat{PF_{-}}$. We have that $\Lambda_{1, F}$ belongs to $\Sigma_{1, loc}$, as indicated in Figure \ref{Fig4}.

\begin{lem}\label{lem33}
If $\varepsilon<\varepsilon_0$ then the classical solution of the Goursat problem (\ref{PSEU}), (\ref{1972601}) satisfies
\begin{equation}\label{4103}
\begin{aligned}
c>0, \quad\frac{R_{+}}{c^{\frac{2\gamma}{\gamma-1}}}~<~-m,\quad \mbox{and}\quad\frac{R_{-}}{c^{\frac{2\gamma}{\gamma-1}}}~<~-m.
\end{aligned}
\end{equation}
\end{lem}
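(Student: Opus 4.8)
The plan is to run a continuity (first‑crossing) argument on the local existence region $\Sigma_{1,loc}$, treating the characteristic decompositions (\ref{cd10}) as a coupled pair of Riccati‑type equations for the scaled quantities $P_\pm:=R_\pm/c^\nu$ along the $C_-$ and $C_+$ families respectively. Before touching these equations, I would first freeze the entropy and vorticity data. By (\ref{4105}) the quantity $\delta_2=\bar{\partial}_+s/c^{\frac{\gamma+1}{\gamma-1}}$ is constant along each streamline $C_0$, and since every streamline through $\Sigma_1$ traces back to $\widehat{PB_0}\cup\widehat{PD_0}$, where $|\delta_2|\le\varepsilon$ by (\ref{72605})--(\ref{1972602}), we get $|\delta_2|\le\varepsilon$ on all of $\Sigma_1$. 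Integrating (\ref{72803}) along streamlines and using the conservation $\bar{\partial}_0\widehat{E}=0$ together with $c<c_M$, the change of $\delta_1=\omega/\rho$ is controlled by $\delta_2$ times the total variation of $c^2$, yielding $|\delta_1|\le\mathcal{B}\varepsilon$ with $\mathcal{B}$ as in (\ref{1972605}). Consequently every $\delta_1,\delta_2$ term on the right of (\ref{cd10}) is of order $\varepsilon$ relative to the quadratic terms, so that, writing things out, $\bar{\partial}_- P_+=c^{\nu-1}\{a\,P_+^2+b\,P_+P_-+O(\varepsilon)\}$ with $a=\frac{\gamma+1}{2(\gamma-1)\cos^2A}>0$, $b=\frac{(\gamma+1)-2\sin^2 2A}{2(\gamma-1)\cos^2A}-\nu$, and $a+b=\frac{(\gamma+1)-\sin^2 2A}{(\gamma-1)\cos^2A}-\nu$.

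The heart of the matter is the sign of the combined coefficient $a+b$. Taking $\nu=\nu_1$ and using $A\le A_M$, the definition of $\nu_1$ in (\ref{1972605}) forces $a+b\le-1$ and $b<0$ for every admissible $A$ and every $c$. Hence, at a hypothetical first interior point where $\max\{P_+,P_-\}$ reaches the threshold $-m_2$, say with $P_+=-m_2$ and $P_-\le-m_2$, the quadratic part equals $a\,m_2^2+b\,m_2(-P_-)$, which is $\le(a+b)m_2^2<0$ and, because $b<0$, becomes even more negative as $P_-$ grows in magnitude. Choosing $\varepsilon<\varepsilon_0$ makes the $O(\varepsilon)$ forcing genuinely subordinate to this negative quadratic gain (uniformly in the large‑$|P_-|$ regime as well), so $\bar{\partial}_-P_+<0$ there, contradicting the crossing. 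Since the data curves keep $P_\pm$ strictly below the threshold by (\ref{72605})--(\ref{1972602}), this proves $R_\pm/c^{\nu_1}<-m_2$ throughout $\Sigma_{1,loc}$; the auxiliary bounds $A\le A_M$, $c<c_M$, $q\ge q_m$, $\widehat{E}\ge\widehat{E}_0/2$ used to define the coefficient maxima $\widehat{\mathcal H}$ are carried along in the same bootstrap.

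Finally I would convert to the stated bound. Since $\nu_1>\frac{2\gamma}{\gamma-1}$ and, on the data, $c>\bar{c}_m/2\ge c_m$, in the region where $c\ge c_m$ we get $R_\pm/c^{\frac{2\gamma}{\gamma-1}}=(R_\pm/c^{\nu_1})\,c^{\nu_1-\frac{2\gamma}{\gamma-1}}<-m_2\,c_m^{\nu_1-\frac{2\gamma}{\gamma-1}}=-2m<-m$. In the remaining set $\{c<c_m\}$, whose boundary inside $\Sigma_1$ lies on the level set $\{c=c_m\}$ where the previous step already gives $R_\pm/c^{\frac{2\gamma}{\gamma-1}}<-2m$, I would run a second first‑crossing argument in the scaling $\nu=\frac{2\gamma}{\gamma-1}$ with threshold $-m$. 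Here the geometric condition (\ref{72602}), combined with $\widehat{E}\ge\widehat{E}_0/2$ to bound $\sin^2A=c^2/q^2$, is exactly what forces $a+b<-\tfrac12$ and $b<0$ once $c<c_m$, so the same sign analysis closes. Positivity $c>0$ is maintained on $\Sigma_{1,loc}$ because the local $C^1$ solution is produced before the flow can reach vacuum, keeping all scalings well defined; combining the two regions gives $R_\pm/c^{\frac{2\gamma}{\gamma-1}}<-m$ on all of $\Sigma_{1,loc}$.

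The main obstacle is precisely the Riccati structure: naive integration of (\ref{cd10}) along a characteristic predicts finite‑distance blow‑up of $R_\pm$, and the estimate survives only because $a+b<0$, which is a genuinely geometric fact about the Mach angle that must be secured separately in the bulk (via $\nu_1$ and $A\le A_M$) and near vacuum (via (\ref{72602})). The second delicate point is the coupling: $P_+$ and $P_-$ obey equations along different characteristic families, so the crossing argument must be run simultaneously for both and must stay valid when the non‑crossing component is arbitrarily large and negative—this is where the dominance of the negative cross term $b<0$ is essential. Keeping the $\varepsilon$‑forcing uniformly subordinate to the quadratic gain as $c\to0$ is what dictates the precise form of $\varepsilon_0$ in (\ref{1972605}).
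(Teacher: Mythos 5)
Your treatment of the derivative bounds follows the paper's own strategy almost exactly: propagate $\delta_2$ along streamlines via (\ref{4105}) and control $\delta_1$ by integrating (\ref{72803}), then run a first-crossing argument on (\ref{cd10}) with the exponent $\nu_1$ in the bulk region $c\ge c_m$ (where the choice of $\nu_1$ in (\ref{1972605}) makes the combined quadratic coefficient $\le -1$), convert to the exponent $\frac{2\gamma}{\gamma-1}$ there, and rerun the crossing argument with that exponent in the region $c<c_m$, where (\ref{72602}) supplies the coefficient bound $-\tfrac12$. Your observation that the argument must remain valid when the non-crossing component is large and negative, which rests on the sign of the cross coefficient, is also the mechanism the paper uses (it bounds $R_\pm^2$ by $R_+R_-$ at the crossing point). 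Up to this point the proposal is sound and is essentially the paper's Steps 1--3.

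There is, however, a genuine gap in your handling of the assertion $c>0$, which is part of the conclusion (\ref{4103}) and not a hypothesis you are entitled to. You dismiss it with ``the local $C^1$ solution is produced before the flow can reach vacuum,'' but this is circular: the lemma is an a priori estimate on an arbitrary classical solution in $\Sigma_{1,loc}$, and the continuation method needs precisely the statement that $c$ cannot vanish at any interior point reached by the bootstrap. Nor does the differential inequality $\bar{\partial}_\pm c<-\tfrac m2\,c^{\frac{2\gamma}{\gamma-1}}$ save you: it is an upper bound on $c$ along characteristics and is perfectly consistent with $c$ hitting zero in finite distance. The paper closes this with a separate argument (its Step 4): if $c(F)=0$ at some $F$, then $q\ge q_m$ forces $A(F)=0$; the bounds $\frac{\bar{\partial}_+c}{c^{2\gamma/(\gamma-1)}}<-\tfrac m2$, $|\delta_1|\le\mathcal{B}\varepsilon$, $|\delta_2|\le\varepsilon$ inserted into (\ref{192307}) and (\ref{192309}) give $\bar{\partial}_+\alpha>0$ and $\bar{\partial}_-\beta<0$ on the incoming characteristic arcs near $F$ (this is where the constraint $\varepsilon_0\le\frac{(\kappa-1)m}{2\widehat{\mathcal{M}}}$ is used), so the stream characteristics issuing from both arcs are funneled into the single point $F$, contradicting conservation of mass. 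Without this step, or some substitute for it, your proof does not establish $c>0$, and consequently the scalings $R_\pm/c^{\nu}$ on which the entire crossing argument depends are not known to remain finite throughout $\Sigma_{1,loc}$.
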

\begin{proof}
The proof of this lemma proceeds in four steps.

{\it Step 1.}
In this step, we shall prove $\frac{R_{\pm}}{c^{\frac{2\gamma}{\gamma-1}}}~<~-m$ on $\widehat{PB_0}\cup\widehat{PD_0}$.

By a direct computation, we have that if $c\geq c_m$ and $\frac{R_{\pm}}{c^{\nu_1}}<-\frac{m_2}{2}$
then
$$
\frac{R_{\pm}}{c^{\frac{2\gamma}{\gamma-1}}}~=~c^{\nu_1-\frac{2\gamma}{\gamma-1}}\frac{R_{\pm}}{c^{\nu_1}}
~< ~c_m^{\nu_1-\frac{2\gamma}{\gamma-1}}\frac{R_{\pm}}{c^{\nu_1}}~<~-\frac{1}{2}m_2 c_m^{\nu_1-\frac{2\gamma}{\gamma-1}}~=~- m.$$
Thus,  in view of $\inf\limits_{\widehat{PD_0}\cup \widehat{PB_0}}c>\frac{\bar{c}_m}{2}\geq c_m$, it suffices to prove  $\frac{R_{\pm}}{c^{\nu_1}}<-\frac{m_2}{2}$ on $\widehat{PB_0}\cup\widehat{PD_0}$.

In view of (\ref{1972605}), (\ref{1972602}), and $\frac{2\gamma}{\gamma-1}<\nu_1$, we have that
if $\varepsilon<\varepsilon_0$ then
\begin{equation}\label{41801}
\begin{aligned}
\frac{R_{+}}{c^{\nu_1}}=\frac{\bar{\partial}_{+}c}{c^{\nu_1}}-\frac{\delta_2\frac{c^{\frac{2\gamma}{\gamma-1}}}{\gamma(\gamma-1)s}}{\kappa c^{\nu_1}}&<-m_2+
\frac{\varepsilon c^{\frac{2\gamma}{\gamma-1}-\nu_1}}{2\gamma s}<-m_2+
\frac{\varepsilon c_m^{\frac{2\gamma}{\gamma-1}-\nu_1}}{2\gamma s_m}<-\frac{m_2}{2}\quad \mbox{on}\quad \widehat{PD_0}.
\end{aligned}
\end{equation}
By symmetry, we  have
\begin{equation}\label{1972604}
\frac{R_{-}}{c^{\nu_1}}~<~-\frac{m_2}{2}\quad \mbox{on}  \quad\widehat{PB_0}.
\end{equation}

From (\ref{1972604}) we have $\big(\frac{R_{-}}{c^{\nu_1}}\big)(P)<-\frac{m_2}{2}$.
Suppose that there exists a ``first" point on $\widehat{PD_0}$, such that $\big(\frac{R_{-}}{c^{\nu_1}}\big)=-\frac{m_2}{2}$ at this point. Then,
by the second equation of (\ref{cd10}), (\ref{1972605}), (\ref{1972602}), and (\ref{41801})  we have
\begin{equation}\label{1972606}
\begin{aligned}
  &c\bar{\partial}_{+}\Big(\frac{R_{-}}{c^{\nu_1}}\Big)\\
~<~&
  \frac{1}{c^{\nu_1}}\Bigg\{ \frac{(\gamma+1)R_{+}R_{-}}{2(\gamma-1)\cos^2A}+\frac{(\gamma+1) R_{+}R_{-}}{2(\gamma-1)\cos^2 A}
-\nu_1 R_{+}R_{-}\\&
\qquad\qquad+(\widehat{H}_{21}R_{-}+\widehat{H}_{22}R_{+})\frac{\delta_1 c^{\frac{2\gamma}{\gamma-1}}}{q^2(\gamma s)^{\frac{1}{\gamma-1}}}+(\widehat{H}_{23}R_{-}+\widehat{H}_{24}R_{+})
\frac{\delta_2 c^{\frac{2\gamma}{\gamma-1}}}{\gamma(\gamma-1)s}\\
&\qquad\qquad +\widehat{H}_{25}
\frac{\delta_2^2 c^{\frac{4\gamma}{\gamma-1}}}{\gamma^2(\gamma-1)^2s^2}+ \widehat{H}_{26}\frac{\delta_1
\delta_2 c^{\frac{4\gamma}{\gamma-1}}}{\gamma(\gamma-1)sq^2(\gamma s)^{\frac{1}{\gamma-1}}}\Bigg\}
\\~<~&\frac{1}{c^{\nu_1}}\Bigg\{-R_{+}R_{-}+(\widehat{H}_{21}R_{-}+\widehat{H}_{22}R_{+})
\frac{\delta_1 c^{\frac{2\gamma}{\gamma-1}}}{q^2(\gamma s)^{\frac{1}{\gamma-1}}}
+(\widehat{H}_{23}R_{-}+\widehat{H}_{24}R_{+})
\frac{\delta_2 c^{\frac{2\gamma}{\gamma-1}}}{\gamma(\gamma-1)s}\\
&\qquad\qquad +\widehat{H}_{25}
\frac{\delta_2^2 c^{\frac{4\gamma}{\gamma-1}}}{\gamma^2(\gamma-1)^2s^2}+ \widehat{H}_{26}\frac{\delta_1
\delta_2 c^{\frac{4\gamma}{\gamma-1}}}{\gamma(\gamma-1)sq^2(\gamma s)^{\frac{1}{\gamma-1}}}\Bigg\}\\
~<~&\frac{1}{c^{\nu_1}}\Bigg\{-R_{+}R_{-}-\big(|\widehat{H}_{21}+|\widehat{H}_{22}|\big)R_{+}
\frac{\delta_1 c^{\frac{2\gamma}{\gamma-1}}}{q^2(\gamma s)^{\frac{1}{\gamma-1}}}
-\big(|\widehat{H}_{23}|+|\widehat{H}_{24}|\big)R_{+}
\frac{\delta_2 c^{\frac{2\gamma}{\gamma-1}}}{\gamma(\gamma-1)s}\\
&\qquad\qquad +|\widehat{H}_{25}|
\frac{\delta_2^2 c^{\frac{4\gamma}{\gamma-1}}}{\gamma^2(\gamma-1)^2s^2}+ |\widehat{H}_{26}|\frac{\delta_1
\delta_2 c^{\frac{4\gamma}{\gamma-1}}}{\gamma(\gamma-1)sq^2(\gamma s)^{\frac{1}{\gamma-1}}}\Bigg\}
\\~<~&
c^{\nu_1}\left\{-\frac{R_{+}R_{-}}{c^{2\nu_1}}-\widehat{\mathcal{M}} \varepsilon c^{\frac{2\gamma}{\gamma-1}-\nu_1}\frac{R_{+}}{c^{\nu_1}}+\widehat{\mathcal{M}}\varepsilon^2c^
{2(\frac{2\gamma}{\gamma-1}-\nu_1)}\right\}
\\~<~&
c^{\nu_1}\Bigg\{\underbrace{\frac{R_{+}}{c^{\nu_1}}\Big(\frac{m_2}{4}-\widehat{\mathcal{M}} \varepsilon c_m^{\frac{2\gamma}{\gamma-1}-\nu_1}\Big)}_{<0}+\underbrace{\widehat{\mathcal{M}}\varepsilon^2c_m^{2(\frac{2\gamma}
{\gamma-1}-\nu_1)}-\frac{m_2^2}{8}}_{<0}\Bigg\}~<~0
\end{aligned}
\end{equation}
at this point, which leads to a contradiction. Thus, by an argument of continuity we have
$
\frac{R_{-}}{c^{\nu_1}}~<~-\frac{m_2}{2}$ on $\widehat{PD_0}$.
 Similarly, we have
$\frac{R_{+}}{c^{\nu_1}}~<~-\frac{m_2}{2}$ on $\widehat{PB_0}$.





\begin{figure}[htbp]
\begin{center}
\includegraphics[scale=0.4]{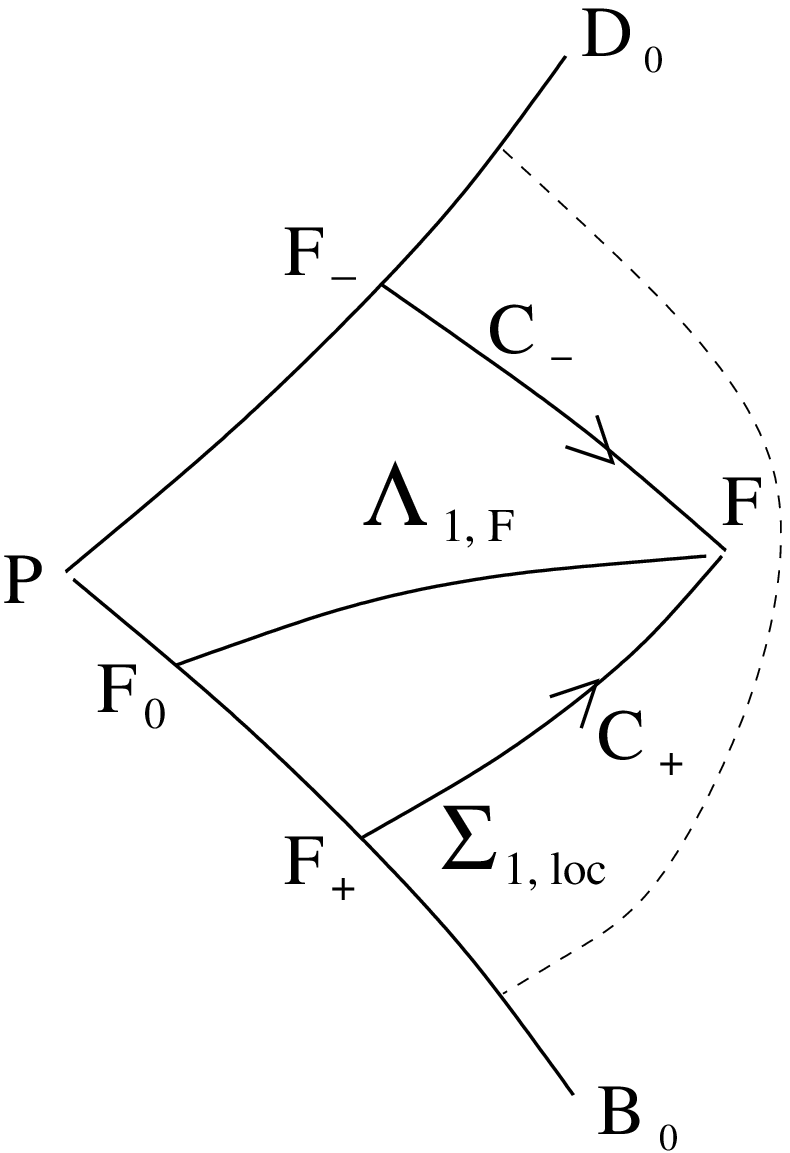}
\caption{ \footnotesize Domains $\Sigma_{1, loc}$ and $\Lambda_{1, F}$.}
\label{Fig4}
\end{center}
\end{figure}

{\it Step 2.}
In this step,
 we shall prove that for any $F\in\Sigma_{1, loc}$, if $\frac{R_{\pm}}{c^{\nu_1}}~<~-\frac{m_2}{2}$ and $c\geq c_m$ in $\Lambda_{1, F}\setminus\{F\}$, then $(\frac{R_{\pm}}{c^{\nu_1}})(F)~<~-\frac{m_2}{2}$.

In view of Remark \ref{2032001}, we can see that $\delta_1$ and $\delta_2$ are continuous across $\widehat{PD_0}$ and $\widehat{PB_0}$.
Then by (\ref{4105}), (\ref{72605}), and (\ref{1972602}) we have
\begin{equation}\label{42002}
|\delta_2|\leq \varepsilon \quad\mbox{on}\quad \Lambda_{1, F}.
\end{equation}

If $\frac{R_{\pm}}{c^{\nu_1}}~<~-\frac{m_2}{2}$ and $c\geq c_m$ in $\Lambda_{1, F}\setminus\{F\}$, then
\begin{equation}\label{41201}
\frac{\bar{\partial}_{\pm}c}{c^{\nu_1}}=\frac{R_{\pm}}{c^{\nu_1}}+
\frac{j\bar{\partial}_{\pm}s}{\kappa c^{\nu_1}}<-\frac{m_2}{2}+
\frac{\varepsilon c^{\frac{2\gamma}{\gamma-1}-\nu_1}}{2\gamma s}<-\frac{m_2}{2}+
\frac{\varepsilon c_m^{\frac{2\gamma}{\gamma-1}-\nu_1}}{2\gamma s_m}<-\frac{m_2}{4} \quad\mbox{on}\quad \Lambda_{1, F}\setminus\{F\}.
\end{equation}
Hence, by (\ref{4401}) and (\ref{32403}) we have
\begin{equation}\label{41901}
\bar{\partial}_{0}c<0\quad\mbox{and}\quad \bar{\partial}_{0}A=\frac{\sin ^3 A}{c^2 \cos A}\Big(\frac{c\bar{\partial}_{0}c}{\sin ^2 A}+\kappa c\bar{\partial}_{0}c \Big)<0\quad\mbox{on}\quad \Lambda_{1, F}\setminus\{F\}.
\end{equation}
Consequently, by the boundary data estimates (\ref{72605}) and (\ref{1972602}) we have
\begin{equation}\label{41202}
c<c_{M},\quad q\geq q_{m},\quad \mbox{and}\quad A\leq A_M\quad\mbox{on}\quad \Lambda_{1, F}.
\end{equation}

Through $F$ one can draw a backward $C_{0}$ characteristic curve, and this curve can intersect with $\widehat{DP}\cup\widehat{BP}$  at some point $F_{0}$.
Integrating (\ref{72803}) along this $C_0$ characteristic curve from $F_0$ to $F$ and using (\ref{72604}), (\ref{42002}), (\ref{41202}), and $\bar{\partial}_{0}c<0$, we get
$$
\Big|\frac{\omega}{\rho}(F)-\frac{\omega}{\rho}(F_0)\Big|<\varepsilon\max\Bigg\{\frac{\gamma^{\frac{1}{\gamma-1}}s_m^{\frac{2-\gamma}{\gamma-1}}c_M^2 }{\gamma(\gamma-1)}, \quad \frac{\gamma^{\frac{1}{\gamma-1}}s_M^{\frac{2-\gamma}{\gamma-1}}c_M^2 }{\gamma(\gamma-1)}\Bigg\},
$$
since $\delta_1$ is continuous across $\widehat{PD_0}$ and $\widehat{PB_0}$.
Hence,
\begin{equation}\label{41203}
\big|\delta_1(F)\big|<\mathcal{B}\varepsilon.
\end{equation}

If $(\frac{R_{-}}{c^{\nu_1}})(F)~=~-\frac{m_2}{2}$ and $(\frac{R_{+}}{c^{\nu_1}})(F)~\leq~-\frac{m_2}{2}$, then
by the second equation of (\ref{cd10}), (\ref{42002}), (\ref{41202}), and (\ref{41203}) we have
$$
\begin{aligned}
  c\bar{\partial}_{+}\Big(\frac{R_{-}}{c^{\nu_1}}\Big)~<~0
\quad\mbox{at}\quad F,
\end{aligned}
$$
as shown in (\ref{1972606}).
This leads to a contradiction, since $\frac{R_{+}}{c^{\nu_1}}~<~-\frac{m_2}{2}$ along $\widehat{F_{-}F}$.
Similarly, if $(\frac{R_{+}}{c^{\nu_1}})(F)~=~-\frac{m_2}{2}$ and $(\frac{R_{-}}{c^{\nu_1}})(F)~\leq~-\frac{m_2}{2}$, then
by the fourth equation of (\ref{cd10}), (\ref{41202}), and (\ref{41203}) we have
$
\bar{\partial}_{-}\big(\frac{R_{+}}{c^{\nu_1}}\big)<0
$
at $F$,
which leads to a contradiction. Thus, we have $\big(\frac{R_{\pm}}{c^{\nu_1}}\big)(F)~<~-\frac{m_2}{2}$.

\vskip 4pt

{\it Step 3.}
In this step, we shall prove that for any $F\in\Sigma_{1, loc}$, if $\frac{R_{\pm}}{c^{\frac{2\gamma}{\gamma-1}}}~<~-m$ and $c>0$ in $\Lambda_{1, F}\setminus\{F\}$ and $0<c(F)< c_m$, then $\big(\frac{R_{\pm}}{c^{\frac{2\gamma}{\gamma-1}}}\big)(F)~<~-m$.

When $\frac{R_{\pm}}{c^{\frac{2\gamma}{\gamma-1}}}~<~-m$ and $c>0$ in $\Lambda_{1, F}\setminus\{F\}$ we have
$$
\frac{\bar{\partial}_{\pm}c}{c^{\frac{2\gamma}{\gamma-1}}}=\frac{R_{\pm}}{c^{\frac{2\gamma}{\gamma-1}}}+
\frac{j\bar{\partial}_{\pm}s}{\kappa c^{\frac{2\gamma}{\gamma-1}}}<-m+
\frac{\varepsilon }{2\gamma s_m}<-\frac{m}{2} \quad\mbox{on}\quad \Lambda_{1, F}\setminus\{F\}.
$$
Using this we can get
$q(F)\geq q_m$, $A(F)\leq A_m$, $\big|\delta_2(F)\big|\leq \varepsilon$, and $\big|\delta_1(F)\big|<\mathcal{B}\varepsilon$, as shown in the previous step.

By (\ref{72602}), (\ref{72605}), and (\ref{1972602}) we have
\begin{equation}\label{1972607}
\begin{aligned}
\frac{\gamma+1}{(\gamma-1)\cos^2 A}-\frac{2\gamma}{\gamma-1}~=~&
\frac{\gamma+1}{(\gamma-1)}\Bigg(1-\frac{c^2}{2(\hat{E}-\frac{c^2}{\gamma-1})}\Bigg)^{-1}-\frac{2\gamma}{\gamma-1}\\~<~&
\frac{\gamma+1}{(\gamma-1)}\Bigg(1-\frac{c^2}{2(\frac{\widehat{E}_0}{2}-\frac{c^2}{\gamma-1})}\Bigg)^{-1}-\frac{2\gamma}{\gamma-1}~<~-\frac{1}{2}\quad \mbox{at}\quad F.
\end{aligned}
\end{equation}
Therefore,
if $\big(\frac{R_{+}}{c^{\frac{2\gamma}{\gamma-1}}}\big)(F)~=~-m$ and $\big(\frac{R_{-}}{c^{\frac{2\gamma}{\gamma-1}}}\big)(F)~\leq ~-m$, then by (\ref{1972607}) and the first equation  of (\ref{cd10}) we have
\begin{equation}\label{42007}
\begin{aligned}
  &c\bar{\partial}_{-}\Big(\frac{R_{+}}{c^{\frac{2\gamma}{\gamma-1}}}\Big)\\<&
  \frac{1}{c^{\frac{2\gamma}{\gamma-1}}}\Bigg\{ \frac{(\gamma+1)R_{+}R_{-}}{2(\gamma-1)\cos^2A}+\frac{(\gamma+1)}{2(\gamma-1)\cos^2 A}
   R_{+}R_{-}-\frac{2\gamma}{\gamma-1} R_{+}R_{-}\\&
\qquad\qquad+(\widehat{H}_{11}R_{-}+\widehat{H}_{12}R_{+})
\frac{\delta_1 c^{\frac{2\gamma}{\gamma-1}}}{q^2(\gamma s)^{\frac{1}{\gamma-1}}}+(\widehat{H}_{13}R_{-}+\widehat{H}_{14}R_{+})
\frac{\delta_2 c^{\frac{2\gamma}{\gamma-1}}}{\gamma(\gamma-1)s}\\
&\qquad\qquad +\widehat{H}_{15}\frac{\delta_2 ^2c^{\frac{4\gamma}{\gamma-1}}}{\gamma^2(\gamma-1)^2s^2}+ \widehat{H}_{16}\frac{\delta_1\delta_2 c^{\frac{2\gamma}{\gamma-1}}}{q^2\gamma(\gamma-1)s(\gamma s)^{\frac{1}{\gamma-1}}}\Bigg\}
\\<&\frac{1}{c^{\frac{2\gamma}{\gamma-1}}}\Bigg\{-\frac{ R_{+}R_{-}}{2}+(\widehat{H}_{11}R_{-}+\widehat{H}_{12}R_{+})
\frac{\delta_1 c^{\frac{2\gamma}{\gamma-1}}}{q^2(\gamma s)^{\frac{1}{\gamma-1}}}+(\widehat{H}_{13}R_{-}+\widehat{H}_{14}R_{+})
\frac{\delta_2 c^{\frac{2\gamma}{\gamma-1}}}{\gamma(\gamma-1)s}\\
&\qquad\qquad +\widehat{H}_{15}\frac{\delta_2 ^2c^{\frac{4\gamma}{\gamma-1}}}{\gamma^2(\gamma-1)^2s^2}+ \widehat{H}_{16}\frac{\delta_1\delta_2 c^{\frac{2\gamma}{\gamma-1}}}{q^2\gamma(\gamma-1)s(\gamma s)^{\frac{1}{\gamma-1}}}\Bigg\}\\
<&\frac{1}{c^{\frac{2\gamma}{\gamma-1}}}\Bigg\{-\frac{ R_{+}R_{-}}{2}-\big(|\widehat{H}_{11}|+|\widehat{H}_{12}|\big)R_{-}
\frac{\delta_1 c^{\frac{2\gamma}{\gamma-1}}}{q^2(\gamma s)^{\frac{1}{\gamma-1}}}-\big(|\widehat{H}_{13}|+|\widehat{H}_{14}|\big)R_{-}
\frac{\delta_2 c^{\frac{2\gamma}{\gamma-1}}}{\gamma(\gamma-1)s}\\
&\qquad\qquad +|\widehat{H}_{15}|\frac{\delta_2 ^2c^{\frac{4\gamma}{\gamma-1}}}{\gamma^2(\gamma-1)^2s^2}+ |\widehat{H}_{16}|\frac{\delta_1\delta_2 c^{\frac{2\gamma}{\gamma-1}}}{q^2\gamma(\gamma-1)s(\gamma s)^{\frac{1}{\gamma-1}}}\Bigg\}
\\<&
c^{\frac{2\gamma}{\gamma-1}}\left\{-\frac{R_{+}R_{-}}{2c^{\frac{4\gamma}{\gamma-1}}}-\widehat{M} \varepsilon \frac{R_{-}}{c^{\frac{2\gamma}{\gamma-1}}}+\widehat{M}\varepsilon^2\right\}
\\<&
c^{\frac{2\gamma}{\gamma-1}}\left\{\frac{R_{-}}{c^{\frac{2\gamma}{\gamma-1}}}\Big(\frac{m}{2}-\widehat{M} \varepsilon \Big)+\widehat{M}\varepsilon^2-\frac{m^2}{4}\right\}~<~0
\qquad\mbox{at}\quad F.
\end{aligned}
\end{equation}
This leads to a contradiction, since $\frac{R_{+}}{c^{\frac{2\gamma}{\gamma-1}}}~<~-m$ along $\widehat{F_{-}F}$.

Similarly, If $\big(\frac{R_{-}}{c^{\frac{2\gamma}{\gamma-1}}}\big)(F)~=~-m$ and $\big(\frac{R_{+}}{c^{\frac{2\gamma}{\gamma-1}}}\big)(F)~\leq ~-m$, then by the second equation  of (\ref{cd6}) we have
$
\bar{\partial}_{-}\big(\frac{R_{-}}{c^{\frac{2\gamma}{\gamma-1}}}\big)<0
$
at $F$,
which leads to a contradiction.
Thus, we have $\big(\frac{R_{\pm}}{c^{\frac{2\gamma}{\gamma-1}}}\big)(F)~<~-m$.

{\it Step 4.}
In this step, we shall prove that for any $F\in\Sigma_{1, loc}$, if $c>0$ in $\Lambda_{1, F}\setminus\{F\}$ then $c(F)>0$.

By the results of the previous steps and an argument of continuity, we can get that if $c>0$ in $\Lambda_{1, F}\setminus\{F\}$, then $\big|\delta_1\big|\leq \mathcal{B}\varepsilon$, $\big|\delta_2\big|\leq \varepsilon$,
$\frac{R_{\pm}}{c^{\frac{2\gamma}{\gamma-1}}}~<~-m$ and
$\frac{\bar{\partial}_{\pm}c}{c^{\frac{2\gamma}{\gamma-1}}}<-\frac{m}{2}$
in $\Lambda_{1, F}\setminus\{F\}$.

Since $\varepsilon<\varepsilon_0\leq \frac{(\kappa-1)m}{2\widehat{\mathcal{M}}}$, there exists a $A_{s}>0$ such that if $A<A_s$ then $$(1+\kappa)\Big(\frac{\kappa-1}{\kappa+1}\cos^2 A-\sin^2A\Big)\frac{m}{2} -\frac{\widehat{\mathcal{M}}\varepsilon}{2}>0.$$

If $c(F)=0$
then by $A=\arcsin \frac{c}{q}$ and $q\geq q_m$ we have $A(F)=0$.
Then there exist a $\bar{F}_{+}$ on $\widehat{F_{+}F}$ and a $\bar{F}_{-}$  on $\widehat{F_{-}F}$ such that $A<A_s$ on $\widehat{\bar{F}_{+}F}$ and $\widehat{\bar{F}_{-}F}$.
Therefore, by (\ref{192307}) we have
\begin{equation}\label{41301}
\begin{aligned}
\bar{\partial}_{+}\alpha~=~&c^{\frac{\gamma+1}{\gamma-1}}\tan A\left\{-(1+\kappa)\Big(\frac{\kappa-1}{\kappa+1}\cos^2 A-\sin^2A\Big)\frac{\bar{\partial}_{+}c }{c^{\frac{2\gamma}{\gamma-1}}} -\frac{\delta_1}{q^2(\gamma s)^{\frac{1}{\gamma-1}}}
+\frac{\delta_2\cos 2A}{\gamma(\gamma-1)s }\right\}\\~>~&c^{\frac{\gamma+1}{\gamma-1}}\tan A\left\{(1+\kappa)\Big(\frac{\kappa-1}{\kappa+1}\cos^2 A-\sin^2A\Big)\frac{m}{2} -\frac{\mathcal{B}\varepsilon}{q_m^2(\gamma s_m)^{\frac{1}{\gamma-1}}}
-\frac{\varepsilon}{\gamma(\gamma-1)s_m }\right\}\\~>~&c^{\frac{\gamma+1}{\gamma-1}}\tan A\left\{(1+\kappa)\Big(\frac{\kappa-1}{\kappa+1}\cos^2 A-\sin^2A\Big)\frac{m}{2} -\frac{\widehat{\mathcal{M}}\varepsilon}{2}\right\}>0\quad \mbox{along}\quad \widehat{\bar{F}_{+}F}.
\end{aligned}
\end{equation}
Similarly, by (\ref{192309}) we have $c\bar{\partial}_{-}\beta<0$ along $\widehat{\bar{F}_{-}F}$.
Therefore, by $\alpha(F)-\beta(F)=2A(F)=0$ we know that the forward $C_0$ characteristic curve issuing from any point on $\widehat{\bar{F}_{+}F}$ and the forward $C_0$ characteristic curve issuing from any point on $\widehat{\bar{F}_{-}F}$ intersect at $F$ as illustrated in Figure \ref{Fig82}, which leads to a contradiction in view of the conservation of mass (see \cite{CQ1}, p. 2953).

\begin{figure}[htbp]
\begin{center}
\includegraphics[scale=0.55]{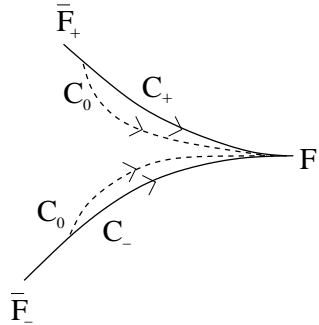}
\caption{ \footnotesize An impossible vacuum point.}
\label{Fig82}
\end{center}
\end{figure}

Combining with the results of the above four steps and using the method of continuity,
we can get this lemma.
\end{proof}

\begin{rem}\label{rem8102}
From the proof of Lemma \ref{lem33}, we can see that the classical solution of the Goursat problem (\ref{PSEU}) and (\ref{1972601}) satisfies
\begin{equation}\label{41903}
\begin{aligned}
&q\geq q_m,\quad \hat{E}>\frac{\widehat{E}_0}{2}, \quad  0<A\leq A_{M}, \quad \frac{\bar{\partial}_{\pm}c}{c^{\frac{2\gamma}{\gamma-1}}}<-\frac{m}{2}, \\&\quad \frac{R_{\pm}}{c^{\frac{2\gamma}{\gamma-1}}}<-m, \quad \big|\delta_1\big|\leq \mathcal{B}\varepsilon, \quad \mbox{and}\quad \big|\delta_2\big|\leq \varepsilon.
\end{aligned}
\end{equation}
\end{rem}

\begin{rem}\label{rem8101}
From steps 1 and 2 in the proof of Lemma \ref{lem33}, we can see that the classical solution of the Goursat problem (\ref{PSEU}) and (\ref{1972601}) also satisfy
\begin{equation}\label{198102}
\frac{R_{\pm}}{c^{\nu_1}}<-\frac{m_2}{2} \quad \mbox{as}\quad c\geq c_m.
\end{equation}
\end{rem}

\begin{lem}\label{34}
If $\varepsilon<\varepsilon_0$ then the classical solution of the Goursat problem (\ref{PSEU}), (\ref{1972601}) satisfies
\begin{equation}\label{4104}
R_{\pm}>-M_1-\frac{\varepsilon c_M^{\frac{2\gamma}{\gamma-1}}}{2\gamma s_m}.
\end{equation}
\end{lem}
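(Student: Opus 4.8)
The plan is to show that $R_+$ is monotone increasing along backward $C_-$ characteristics and $R_-$ along backward $C_+$ characteristics, so that the infima of $R_+$ and $R_-$ are attained on the characteristic boundaries $\widehat{PD_0}$ and $\widehat{PB_0}$, where they are read off directly from the boundary data. Set $K:=M_1+\frac{\varepsilon c_M^{\frac{2\gamma}{\gamma-1}}}{2\gamma s_m}$; the goal is to prove $R_\pm>-K$ throughout $\Sigma_{1,loc}$.

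First I would record the boundary values. Since $\frac{j}{\kappa}=\frac{c}{2\gamma s}$ and $\bar\partial_\pm s=\pm\,\delta_2\,c^{\frac{\gamma+1}{\gamma-1}}$, the definition (\ref{1973101}) gives $R_+=\bar\partial_+c-\frac{\delta_2 c^{\frac{2\gamma}{\gamma-1}}}{2\gamma s}$ on $\widehat{PD_0}$ and $R_-=\bar\partial_-c+\frac{\delta_2 c^{\frac{2\gamma}{\gamma-1}}}{2\gamma s}$ on $\widehat{PB_0}$. Combining with $\bar\partial_+c\geq-M_1$, $|\delta_2|\leq\varepsilon$, $c<c_M$, $s\geq s_m$ from (\ref{1972602}) (and the symmetric estimates from (\ref{72605})), I obtain $R_+\geq-K$ on $\widehat{PD_0}$ and $R_-\geq-K$ on $\widehat{PB_0}$, which already realizes the asserted bound on the boundary.

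The core of the argument is a continuity argument in the spirit of Lemma \ref{lem33}: for any $F\in\Sigma_{1,loc}$, if $R_\pm>-K$ on $\Lambda_{1,F}\setminus\{F\}$ then $R_\pm(F)>-K$. Suppose instead that $R_+(F)=-K$ while $R_-(F)\geq-K$, and evaluate $c\bar\partial_-R_+$ at $F$ by the first equation of (\ref{cd8}). Its quadratic part is
$$Q=\frac{R_+}{2(\gamma-1)\cos^2A}\Big[(\gamma+1)R_++\big((\gamma+1)-2\sin^2 2A\big)R_-\Big].$$
The decisive algebraic observation is that $(\gamma+1)-2\sin^2 2A\geq\gamma-1>0$ for every $A$, because $\sin^2 2A\leq1<\frac{\gamma+1}{2}$; hence, since $R_+=-K<0$ and $R_-<0$ (recall $R_\pm/c^{\frac{2\gamma}{\gamma-1}}<-m$ from (\ref{41903})), the bracket is strictly less than $-(\gamma+1)K$, so that
$$Q>\frac{(\gamma+1)K^2}{2(\gamma-1)\cos^2A}\geq\frac{(\gamma+1)M_1^2}{2(\gamma-1)}>0,$$
a lower bound that is uniform and does not degenerate as $c\to0$ (note $\cos^2A\geq\cos^2A_M>0$ because $A\leq A_M<\tfrac{\pi}{2}$ by (\ref{41903})). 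The remaining terms of (\ref{cd8}) are linear or quadratic in $\delta_1,\delta_2$ with coefficients controlled by $\widetilde{\mathcal{H}}$; using $|R_\pm|\leq K$ at $F$, together with $|\delta_1|\leq\mathcal{B}\varepsilon$, $|\delta_2|\leq\varepsilon$, $q\geq q_m$, $s\in[s_m,s_M]$, $c<c_M$, and the constants $\mathcal{N},\widetilde{\mathcal{M}}$ of (\ref{1972605}), they are bounded by a quantity of order $\varepsilon$. Consequently, for $\varepsilon<\varepsilon_0$ they cannot overcome $Q$, so $c\bar\partial_-R_+(F)>0$. This contradicts $\bar\partial_-R_+(F)\leq0$, which must hold because $R_+>-K$ on the backward $C_-$ arc $\widehat{F_-F}$ while $R_+(F)=-K$. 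The case $R_-(F)=-K$ is handled identically after interchanging $R_+$ and $R_-$ and using the second equation of (\ref{cd8}).

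Finally, combining the boundary values with the continuity argument and invoking the method of continuity exactly as at the end of Lemma \ref{lem33}, I conclude $R_\pm>-K$ on all of $\Sigma_{1,loc}$, which is (\ref{4104}). The main obstacle is the sign analysis of $Q$ at a candidate extremal point: one must verify that the cross term $R_+R_-$ enters with the favorable sign for all admissible Mach angles, and this hinges precisely on the inequality $(\gamma+1)-2\sin^2 2A>0$. Once that is secured, the dominance of the fixed positive quantity $Q$ over the $\varepsilon$-small source terms---uniformly down to the vacuum limit $c=0$, where $Q$ stays bounded below while the source terms carry positive powers of $c$---follows routinely from the a priori bounds (\ref{41903}).
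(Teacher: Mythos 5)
Your proof is correct and follows essentially the same route as the paper: both rest on the boundary values of $R_{\pm}$ inherited from (\ref{72605})--(\ref{1972602}) together with the positivity of $c\bar{\partial}_{\mp}R_{\pm}$ read off from (\ref{cd8}), where the quadratic part is positive because $(\gamma+1)-2\sin^2 2A\geq \gamma-1>0$ and $R_{\pm}<0$, and it dominates the $O(\varepsilon)$ source terms; the paper phrases this as $\bar{\partial}_{-}R_{+}>0$ throughout followed by integration along $C_{-}$ back to $\widehat{PD_0}$, while you phrase it as a barrier argument at a first touching point, which is an equivalent formulation. The one step to tighten is the final smallness claim: the paper secures the dominance by normalizing with $c^{\frac{4\gamma}{\gamma-1}}$ and comparing $R_{+}R_{-}/c^{\frac{4\gamma}{\gamma-1}}>m^2$ against $\widetilde{\mathcal{M}}\varepsilon\,|R_{\pm}|/c^{\frac{2\gamma}{\gamma-1}}+\widetilde{\mathcal{M}}\varepsilon^2$, which is exactly what the requirement $\varepsilon_0\leq \frac{m}{3(\widetilde{\mathcal{M}}+1)}$ in (\ref{1972605}) is designed for, whereas your unnormalized comparison of $Q\geq \frac{(\gamma+1)K^2}{2(\gamma-1)}$ against a source bound of order $K\varepsilon$ (with constants involving $c_M^{\frac{2\gamma}{\gamma-1}}$ and $K=M_1+\frac{\varepsilon c_M^{2\gamma/(\gamma-1)}}{2\gamma s_m}$) needs a smallness condition on $\varepsilon$ involving $M_1$ that is not visibly implied by the definition of $\varepsilon_0$, so you should either verify that implication or switch to the paper's normalized comparison.
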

\begin{proof}
It is easy to check by (\ref{72605}) and (\ref{1972602}) that
\begin{equation}\label{41101}
R_{+}\mid_{\widehat{PD_0}}~>~-M_1-\frac{\varepsilon c_M^{\frac{2\gamma}{\gamma-1}}}{2\gamma s_m}\quad
\mbox{and}\quad
R_{-}\mid_{\widehat{PB_0}}~>~-M_1-\frac{\varepsilon c_M^{\frac{2\gamma}{\gamma-1}}}{2\gamma s_m}.
\end{equation}

From the first equation of (\ref{cd8}), we have
$$
\begin{aligned}
c\bar{\partial}_{-}R_{+}~=~&
   \frac{(\gamma+1)R_{+}^2}{2(\gamma-1)\cos^2A}+\frac{(\gamma+1)-2\sin^2 2A}{2(\gamma-1)\cos^2 A}
   R_{+}R_{-}\\&
+(\widetilde{H}_{11}R_{-}+\widetilde{H}_{12}R_{+})
\frac{\delta_1 c^{\frac{2\gamma}{\gamma-1}}}{q^2(\gamma s)^{\frac{1}{\gamma-1}}}+(\widetilde{H}_{13}R_{-}+\widetilde{H}_{14}R_{+})
\frac{\delta_2 c^{\frac{2\gamma}{\gamma-1}}}{\gamma(\gamma-1)s}\\
& +\widetilde{H}_{15}\frac{\delta_2 ^2c^{\frac{4\gamma}{\gamma-1}}}{\gamma^2(\gamma-1)^2s^2}+ \widetilde{H}_{16}\frac{\delta_1\delta_2 c^{\frac{4\gamma}{\gamma-1}}}{\gamma(\gamma-1)s q^2(\gamma s)^{\frac{1}{\gamma-1}}}\\~>~&
\frac{1}{2\cos^2 A} R_{+}R_{-}
+(\widetilde{H}_{11}R_{-}+\widetilde{H}_{12}R_{+})
\frac{\delta_1 c^{\frac{2\gamma}{\gamma-1}}}{q^2(\gamma s)^{\frac{1}{\gamma-1}}}+(\widetilde{H}_{13}R_{-}+\widetilde{H}_{14}R_{+})
\frac{\delta_2 c^{\frac{2\gamma}{\gamma-1}}}{\gamma(\gamma-1)s}\\
& +\widetilde{H}_{15}\frac{\delta_2 ^2c^{\frac{4\gamma}{\gamma-1}}}{\gamma^2(\gamma-1)^2s^2}+ \widetilde{H}_{16}\frac{\delta_1\delta_2 c^{\frac{4\gamma}{\gamma-1}}}{\gamma(\gamma-1)s q^2(\gamma s)^{\frac{1}{\gamma-1}}}\\~>~&
\frac{ c^{\frac{4\gamma}{\gamma-1}}}{2\cos^2 A}\Bigg\{\frac{R_{+}R_{-}}{c^{\frac{4\gamma}{\gamma-1}}}+\widetilde{\mathcal{M}} \varepsilon \frac{R_{-}}{c^{\frac{2\gamma}{\gamma-1}}}+\widetilde{\mathcal{M}} \varepsilon \frac{R_{+}}{c^{\frac{2\gamma}{\gamma-1}}}-\widetilde{\mathcal{M}}\varepsilon^2\Bigg\}
\\~>~&\frac{ c^{\frac{4\gamma}{\gamma-1}}}{2\cos^2 A}\Bigg\{\Big(
\frac{R_{+}}{c^{\frac{2\gamma}{\gamma-1}}}+\frac{R_{-}}{c^{\frac{2\gamma}{\gamma-1}}}\Big)\Big(-\frac{m}{3}
+\widetilde{\mathcal{M}}\varepsilon\Big)
+\frac{m^2}{3}-\widetilde{\mathcal{M}}\varepsilon^2\Bigg\}>0,
\end{aligned}
$$
since $\varepsilon<\varepsilon_0\leq \frac{m}{3(\widetilde{\mathcal{M}}+1)}$.
Combining with this and (\ref{41101}), we have $R_{+}>-M_1-\frac{\varepsilon c_M^{\frac{2\gamma}{\gamma-1}}}{2\gamma s_m}$.

Similarly, by the second equation of (\ref{cd8}) we have
$
c\bar{\partial}_{+}R_{-}>0.
$
Combining this with (\ref{41101}) we get $R_{-}>-M_1-\frac{\varepsilon c_M^{\frac{2\gamma}{\gamma-1}}}{2\gamma s_m}$.

We then complete the proof of this lemma.
\end{proof}

Using (\ref{4401}), (\ref{192303})--(\ref{82505}), and Lemmas \ref{lem33} and \ref{34}, we can establish uniform a priori $C^1$ norm estimate of the solution.
Therefore, by the local existence result and the standard continuity extension method (cf. \cite{LiT}), one can extend the local solution to a whole determinate region of the Goursat problem. We then have the following lemma.
\begin{lem}
The Goursat problem (\ref{PSEU}), (\ref{1972601}) admits a global classical solution in a region $\Sigma_1$
  bounded by $\widehat{PD_0}$, $\widehat{PB_0}$, a forward $C_{-}$ characteristic curve $C_{-}^{D_0}$ issuing from $D_0$, and a forward $C_{+}$ characteristic curve  $C_{+}^{B_0}$ issuing from $B_0$.  Moreover, the solution satisfies
(\ref{41903}) and (\ref{198102}).
\end{lem}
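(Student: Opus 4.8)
The plan is to upgrade the scaled a priori bounds of Lemmas \ref{lem33} and \ref{34} into a genuine uniform $C^1$ estimate for $(u,v,\rho,s)$ on any subregion $\Sigma_{1,loc}$ on which the local solution exists, and then to invoke the standard local-existence/continuation machinery for quasilinear hyperbolic Goursat problems (cf. \cite{Li-Yu,LiT}) to fill up the whole determinate region $\Sigma_1$.

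First I would turn the bounds on $R_{\pm}$ into two-sided bounds on the wave-direction derivatives of $c$. By the definition (\ref{1973101}), $\bar{\partial}_{\pm}c=R_{\pm}+j\bar{\partial}_{\pm}s/\kappa$, so Lemma \ref{lem33} (which gives $R_{\pm}/c^{\frac{2\gamma}{\gamma-1}}<-m$), Lemma \ref{34} (which gives the lower bound on $R_{\pm}$), and the bound $|\delta_2|\le\varepsilon$ on $\bar{\partial}_{\pm}s=\delta_2 c^{\frac{\gamma+1}{\gamma-1}}$ together control $\bar{\partial}_{\pm}c$. Combined with $c\le c_M$, $q\ge q_m$, $0<A\le A_M$, and $|\delta_1|\le\mathcal{B}\varepsilon$ (that is, $\omega=\rho\delta_1$ bounded), all collected in Remark \ref{rem8102}, inserting these into (\ref{192303})--(\ref{82505}) bounds $\bar{\partial}_{+}u,\bar{\partial}_{-}u,\bar{\partial}_{+}v,\bar{\partial}_{-}v$. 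Converting to Cartesian derivatives through (\ref{72802}), and recovering $\rho_x,\rho_y$ from (\ref{72301}) while using $\bar{\partial}_{0}\widehat{E}=0$ and $\bar{\partial}_{0}s=0$ from (\ref{4401}) to control $\widehat{E}$ and $s$ along stream characteristics, then yields a bound on the full $C^1$ norm.

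The delicate point, and the one I expect to be the main obstacle, is that the passage (\ref{72802}) from wave-direction to Cartesian derivatives carries a factor $1/\sin 2A$, which degenerates as the solution approaches vacuum $c\to 0$ (equivalently $A\to 0$). One must verify that this apparent singularity is compensated by the structure of the right-hand sides of (\ref{192303})--(\ref{82505}): the $\bar{\partial}_{\pm}c$ terms are $O(c^{\frac{2\gamma}{\gamma-1}})$, the entropy terms carry $\bar{\partial}_{\pm}s=O(c^{\frac{\gamma+1}{\gamma-1}})$, and the vorticity terms carry a factor $\sin A=O(c)$, so that the combination $\sin\beta\,\bar{\partial}_{+}-\sin\alpha\,\bar{\partial}_{-}$ applied to $u$ and $v$ vanishes at the same rate as $\sin 2A$. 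Checking these cancellations, for instance using $\bar{\partial}_{+}+\bar{\partial}_{-}=2\cos A\,\bar{\partial}_{0}$ from (\ref{32403}) and $\bar{\partial}_{+}s=-\bar{\partial}_{-}s$ from (\ref{42001}), shows that the Cartesian gradients of $u$ and $v$ stay uniformly bounded up to the vacuum, so the $C^1$ estimate is genuinely uniform and does not rely on a positive lower bound for $c$.

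With the uniform $C^1$ bound in hand, the conclusion is standard. Local existence of a $C^1$ solution near the corner $P$ is already known from the method of characteristics, the compatibility relations $\bar{\partial}_{+}\widehat{E}_{\widehat{PD_0}}(P)=\bar{\partial}_{-}\widehat{E}_{\widehat{PB_0}}(P)=0$ and $\bar{\partial}_{+}s_{\widehat{PD_0}}(P)=\bar{\partial}_{-}s_{\widehat{PB_0}}(P)=0$ having been verified. I would then run the continuity extension argument of \cite{LiT}: the maximal region of existence is open, the uniform $C^1$ estimate rules out derivative blow-up, hyperbolicity persists because $A\le A_M<\pi/2$, and Step 4 of Lemma \ref{lem33} guarantees $c>0$ throughout the interior so that no vacuum can form inside the region. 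Hence the local solution extends to the entire determinate region of the Goursat problem, namely the region $\Sigma_1$ bounded by $\widehat{PD_0}$, $\widehat{PB_0}$, $C_{-}^{D_0}$ and $C_{+}^{B_0}$ (whether or not these last two characteristics meet), and the estimates (\ref{41903}) and (\ref{198102}) hold on all of $\Sigma_1$ by Lemmas \ref{lem33} and \ref{34} together with Remarks \ref{rem8102} and \ref{rem8101}.
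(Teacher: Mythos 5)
Your proposal follows essentially the same route as the paper: the paper's own proof is exactly the one-line argument of combining (\ref{4401}), (\ref{192303})--(\ref{82505}) with Lemmas \ref{lem33} and \ref{34} to get a uniform a priori $C^1$ bound and then invoking local existence plus the standard continuity extension method of \cite{LiT}. Your additional verification that the $1/\sin 2A$ degeneracy in (\ref{72802}) is compensated by the vanishing rates of $\bar{\partial}_{\pm}c$, $\bar{\partial}_{\pm}s$, and $\omega\sin A$ near vacuum is a correct elaboration of a point the paper leaves implicit.
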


\vskip 4pt

\subsection{Flow in $\Sigma_{1}^{+}$ and $\Sigma_{1}^{-}$}The purpose of this subsection is to construct the solution in domains $\Sigma_{1}^{+}$ and $\Sigma_{1}^{-}$.
For this purpose, we consider system (\ref{PSEU}) with the boundary conditions:
\begin{equation}\label{bd4}
\left\{
  \begin{array}{ll}
    (u, v, \rho, s)=\big(u_{{C_{+}^{B_0}}}, v_{{C_{+}^{B_0}}}, \rho_{{C_{+}^{B_0}}}, s_{{C_{+}^{B_0}}}\big)(x, y)& \hbox{on $ C_{+}^{B_0}$;}  \\[8pt]
  (u, v)\cdot {\bf n_w}=0& \hbox{on $W_{-}$,}
  \end{array}
\right.
\end{equation}
where $\big(u_{{C_{+}^{B_0}}}, v_{{C_{+}^{B_0}}}, \rho_{{C_{+}^{B_0}}}, s_{{C_{+}^{B_0}}}\big)(x, y)$ is the solution of the Goursat problem  (\ref{PSEU}), (\ref{1972601}) on $C_{+}^{B_0}$.

\begin{figure}[htbp]
\begin{center}
\includegraphics[scale=0.45]{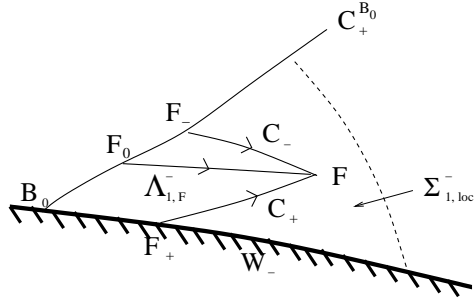}
\caption{ \footnotesize Domains $\Sigma_{1, loc}^{-}$ and ${\Lambda_{1, F}^{-}}$.}
\label{Fig5}
\end{center}
\end{figure}

Existence of a local $C^1$ solution is known by the method of characteristic, see \cite{CQ2, Li-Yu}.
In order to extend the local solution to global solution we need to establish the a priori $C^1$ norm estimate of the solution.
In what follows,
we assume that the boundary value problem (\ref{PSEU}), (\ref{bd4}) has a classical solution in some region $\Sigma_{1, loc}^{-}$, and then establish the $C^1$ norm estimate of the solution on $\Sigma_{1, loc}^{-}$. Since the local classical solution is constructed by the method of characteristics, through any point $F\in \Sigma_{1, loc}^{-}$ ($F$ can be also on $W_{-}$) one can draw a backward $C_{-}$ ($C_{+}$, resp.) characteristic curve up to a point $F_{-}$ ($F_{+}$, resp.) on $C_{+}^{B_0}$ ($W_{-}$, resp.), and the closed domain ${\Lambda_{1, F}^{-}}$ bounded by $\widehat{F_{-}F}$, $\widehat{F_{+}F}$, $\widehat{B_0F_{-}}$,  and $\widehat{B_0F_{+}}$ belongs to $\Sigma_{1, loc}^{-}$, as indicated in Figure \ref{Fig5}.


\begin{lem}\label{lem36}
If $\varepsilon<\varepsilon_0$ then the classical solution of the boundary value problem (\ref{PSEU}), (\ref{bd4}) satisfies $\frac{R_{\pm}}{c^{\frac{2\gamma}{\gamma-1}}}~<~-m$ in ${\Sigma_{1, loc}^{-}}$ and $c>0$ in ${\Sigma_{1, loc}^{-}}\setminus W_{-}$.
\end{lem}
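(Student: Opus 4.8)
The plan is to run the same four-step method of continuity used in the proof of Lemma \ref{lem33}, with the single characteristic boundary $\widehat{PD_0}$ there replaced by the slip wall $W_{-}$. The two sides of $\Sigma_{1,loc}^{-}$ supply the data in different ways. On the characteristic side $C_{+}^{B_0}$ the bounds $\frac{R_{\pm}}{c^{\nu_1}}<-\frac{m_2}{2}$ (where $c\geq c_m$), $\frac{R_{\pm}}{c^{\frac{2\gamma}{\gamma-1}}}<-m$, $|\delta_1|\leq\mathcal{B}\varepsilon$ and $|\delta_2|\leq\varepsilon$ are inherited verbatim from the global Goursat solution of the previous subsection through (\ref{41903}) and (\ref{198102}). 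On $W_{-}$ the slip condition enters only through (\ref{42201})--(\ref{4203}): since $f''>0$ one has $R_{-}-R_{+}=-(\gamma-1)q f''/\big(1+[f'(x)]^2\big)^{\frac{3}{2}}<0$, hence $R_{-}<R_{+}$ on $W_{-}$, and dividing by $c^{\nu_1}>0$ gives $\frac{R_{-}}{c^{\nu_1}}<\frac{R_{+}}{c^{\nu_1}}$ there. This is the one structural fact that allows the wall to take over the role of the missing characteristic boundary.

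The transport directions fix which datum controls which invariant. By (\ref{cd10}), $\frac{R_{+}}{c^{\nu_1}}$ is carried by $\bar{\partial}_{-}$ and is recovered at any $F\in\Sigma_{1,loc}^{-}$ (wall points included) by tracing the backward $C_{-}$ characteristic to its foot $F_{-}$ on $C_{+}^{B_0}$, while $\frac{R_{-}}{c^{\nu_1}}$ is carried by $\bar{\partial}_{+}$ and is recovered by tracing the backward $C_{+}$ characteristic to its foot $F_{+}$ on $W_{-}$. I would run both inequalities simultaneously in a single first-point argument: at a first point where one of $\frac{R_{\pm}}{c^{\nu_1}}$ attains $-\frac{m_2}{2}$ while the other is still $\leq-\frac{m_2}{2}$, the relevant equation of (\ref{cd10}) has a strictly negative right-hand side, exactly as in the computation (\ref{1972606}), contradicting that the invariant was $<-\frac{m_2}{2}$ just upstream along $\widehat{F_{+}F}$ or $\widehat{F_{-}F}$. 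When the first point lies on $W_{-}$ there is nothing extra to verify for $R_{-}$: the wall relation $\frac{R_{-}}{c^{\nu_1}}<\frac{R_{+}}{c^{\nu_1}}$ forces $\frac{R_{-}}{c^{\nu_1}}<-\frac{m_2}{2}$ as soon as $\frac{R_{+}}{c^{\nu_1}}<-\frac{m_2}{2}$ holds there. Lowering the exponent from $\nu_1$ to $\frac{2\gamma}{\gamma-1}$ on $\{c<c_m\}$ is then handled as in Step 3 of Lemma \ref{lem33}, the decisive sign coming from (\ref{1972607}) and the estimate (\ref{42007}).

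The auxiliary bounds are propagated as before: once $\frac{\bar{\partial}_{\pm}c}{c^{\frac{2\gamma}{\gamma-1}}}<-\frac{m}{2}$ is available, (\ref{41901}) yields $\bar{\partial}_{0}c<0$ and $\bar{\partial}_{0}A<0$, whence $A\leq A_M$, $q\geq q_m$ and, by $\bar{\partial}_{0}\widehat{E}=0$ in (\ref{4401}), $\widehat{E}\geq\widehat{E}_0/2$; the estimate $|\delta_2|\leq\varepsilon$ follows from $\bar{\partial}_{0}\delta_2=0$ in (\ref{4105}) together with the continuity of $\delta_2$ (Remark \ref{2032001}), and $|\delta_1|\leq\mathcal{B}\varepsilon$ by integrating (\ref{72803}) along a backward $C_0$ characteristic. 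Finally, the claim $c>0$ off the wall is the analogue of Step 4: at a putative interior vacuum point $F$ one has $A(F)=0$ since $q\geq q_m$, and then (\ref{192307})--(\ref{192309}) give $\bar{\partial}_{+}\alpha>0$ and $\bar{\partial}_{-}\beta<0$ for small $A$ near $F$, so the forward $C_0$ characteristics issuing from the two incoming characteristic legs would meet at $F$, contradicting conservation of mass. This step is deliberately confined to $\Sigma_{1,loc}^{-}\setminus W_{-}$, since a vacuum point on $W_{-}$ is not excluded and is precisely the wall-adjacent vacuum permitted by the theorem.

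The main obstacle is the apparent circularity introduced by the wall. In the Goursat case both invariants are pinned down on characteristic boundaries independently of the interior solution, whereas here the wall bound for $\frac{R_{-}}{c^{\nu_1}}$ is deduced from the wall bound for $\frac{R_{+}}{c^{\nu_1}}$, while $\frac{R_{+}}{c^{\nu_1}}$ on the wall is obtained by $C_{-}$-transport from $C_{+}^{B_0}$ through a source term in (\ref{cd10}) that itself involves $R_{-}$. I expect this to require the most care, and the way to resolve it is to treat both inequalities as a single monotone object in the method of continuity: the first violation, wherever it occurs and whichever invariant it concerns, is met by the strictly negative right-hand side of the appropriate equation in (\ref{cd10}), so the wall relation $R_{-}<R_{+}$ never needs an independent bound but merely transmits the $R_{+}$ control to $R_{-}$ at wall points. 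The threshold constants collected in (\ref{1972605}), and in particular the choice of $\varepsilon_0$, are arranged exactly so that all of these sign computations close.
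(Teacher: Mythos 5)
Your overall architecture is the same as the paper's: a four-step continuity argument parallel to Lemma \ref{lem33}, with the wall handled through (\ref{4203}) so that $R_{-}<R_{+}$ on $W_{-}$ transmits the $R_{+}$ bound to $R_{-}$ at wall points (equivalently, the paper's observation that a first violation of the $R_{-}$ bound with $R_{+}$ still strictly below cannot occur on $W_{-}$, so the leg $\widehat{F_{+}F}$ exists and the ODE argument from (\ref{1972606}), (\ref{42007}) applies). Steps 3 and 4 and the propagation of $\delta_1$, $\delta_2$, $A$, $q$, $\widehat{E}$ also match.

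There is, however, one genuine gap: you assert that on the characteristic boundary $C_{+}^{B_0}$ the bounds for \emph{both} $R_{+}$ and $R_{-}$ are ``inherited verbatim'' from the Goursat solution via (\ref{41903}) and (\ref{198102}). That is only true for $R_{+}$. The quantity $R_{-}=\bar{\partial}_{-}c-\frac{j\bar{\partial}_{-}s}{\kappa}$ contains the derivative $\bar{\partial}_{-}c$, which is transversal to the $C_{+}$ characteristic $C_{+}^{B_0}$; since the solution is only piecewise smooth, this derivative generically jumps across $C_{+}^{B_0}$ (this is exactly the phenomenon recorded in Remark \ref{2032001} for $\bar{\partial}_{+}c$ across the $C_{-}$ curve $\widehat{DP}$). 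So the one-sided value of $R_{-}$ seen from $\Sigma_{1,loc}^{-}$ is not the Goursat value, and its bound must be re-established for the new problem. This is precisely what the paper's Step 1 does: it pins down $R_{-}(B_0)<R_{+}(B_0)$ from the corner wall relation (\ref{4203}), and then propagates $\frac{R_{-}}{c^{\nu_1}}<-\frac{m_2}{2}$ (resp.\ $\frac{R_{-}}{c^{2\gamma/(\gamma-1)}}<-m$) along $C_{+}^{B_0}$ by a first-point argument, using that $C_{+}^{B_0}$ is an integral curve of $\bar{\partial}_{+}$ so the second equation of (\ref{cd10}) is an ODE for $R_{-}$ there with known coefficients. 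Your framework can absorb this repair --- simply let the first-violation point range over the closure including $C_{+}^{B_0}$, with the degenerate leg $\widehat{F_{+}F}\subset C_{+}^{B_0}$ anchored at $B_0$ --- but as written the claim of verbatim inheritance of the $R_{-}$ data is incorrect and must be replaced by this boundary ODE argument.
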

\begin{proof}

The proof of this lemma proceeds in four steps.

{\it Step 1.} From  Remarks \ref{rem8101} and \ref{rem8102}, we have that along
$ C_{+}^{B_0}$,
\begin{equation}
\frac{R_{+}}{c^{\nu_1}}<-\frac{m_2}{2} \quad \mbox{as}\quad c\geq c_m\quad\mbox{and}\quad \frac{R_{+}}{c^{\frac{2\gamma}{\gamma-1}}}<-m \quad \mbox{as}\quad c>0.
\end{equation}
By (\ref{4203}) we have
$$
R_{-}(B_0)=R_{+}(B_0)-\frac{(\gamma-1)q f''(x)}{\big(1+[f'(x)]^2\big)^{\frac{3}{2}}}<R_{+}(B_0).
$$
Thus, we have
\begin{equation}\label{198103}
\Big(\frac{R_{-}}{c^{\nu_1}}\Big)(B_0)<-\frac{m_2}{2} \quad \mbox{as}\quad c(B_0)\geq c_m,\quad\mbox{and}\quad \Big(\frac{R_{-}}{c^{\frac{2\gamma}{\gamma-1}}}\Big)(B_0)<-m \quad \mbox{as}\quad c(B_0)>0.
\end{equation}

If there is a point on $ C_{+}^{B_0}$ such that $c\geq c_m$ and $\frac{R_{-}}{c^{\nu_1}}=-\frac{m_2}{2}$ at this point.
Then by the second equation of (\ref{cd10}) we have $
  c\bar{\partial}_{-}\big(\frac{R_{+}}{c^{\nu_1}}\big)~<~0
$, as shown in (\ref{1972606}). If there is a point on $ C_{+}^{B_0}$ such that $c<c_m$ and $\frac{R_{-}}{c^{\frac{2\gamma}{\gamma-1}}}=-m$ at this point.
Then by the second equation of (\ref{cd10}) we have $
  c\bar{\partial}_{-}\big(\frac{R_{+}}{c^{\frac{2\gamma}{\gamma-1}}}\big)~<~0
$, as shown in (\ref{42007}). Therefore, by (\ref{198103}), $\bar{\partial}_{\pm}c<0$, and an argument of continuity we have that along
$ C_{+}^{B_0}$,
\begin{equation}
\frac{R_{-}}{c^{\nu_1}}<-\frac{m_2}{2} \quad \mbox{as}\quad c\geq c_m\quad\mbox{and}\quad \frac{R_{-}}{c^{\frac{2\gamma}{\gamma-1}}}<-m \quad \mbox{as}\quad c>0.
\end{equation}
\vskip 4pt

\vskip 4pt
{\it Step 2.}
 In this step, we shall prove that for any $F\in\Sigma_{1, loc}^{-}$, if $\frac{R_{\pm}}{c^{\nu_1}}~<~-\frac{m_2}{2}$ and $c\geq c_m$ in $\Lambda_{1, F}^{-}\setminus\{F\}$, then $\frac{R_{\pm}}{c^{\nu_1}}~<~-\frac{m_2}{2}$ at $F$. 

Through any point in $\Sigma_{1, loc}^{-}$ one can draw a backward $C_{0}$ characteristic curve, and this curve can intersect with $\widehat{BP}$ at some point. By Remark \ref{2032001}, we can get that $\delta_1$ and $\delta_2$ are also continuous across $C_{+}^{B_0}$. Therefore, as shown in (\ref{42002})--(\ref{41203}),
we have 
$$
|\delta_2|\leq \varepsilon \quad \mbox{and}\quad \big|\delta_1\big|\leq \mathcal{B}\varepsilon\quad \mbox{in}\quad\Sigma_{1, loc}^{-}.
$$

If $\big(\frac{R_{+}}{c^{\nu_1}}\big)(F)~=~-\frac{m_2}{2}$ and $\big(\frac{R_{-}}{c^{\nu_1}}\big)(F)~\leq~-\frac{m_2}{2}$, then we get
$
  c\bar{\partial}_{-}\big(\frac{R_{+}}{c^{\nu_1}}\big)~<~0
$ at $F$. (The proof for this is the same as (\ref{1972606}), so we omit it.)
This leads to a contradiction, since $\frac{R_{+}}{c^{\nu_1}}~<~-\frac{m_2}{2}$ in $\Lambda_{1, F}^{-}\setminus\{F\}$.
If $\big(\frac{R_{-}}{c^{\nu_1}}\big)(F)~=~-\frac{m_2}{2}$ and $\big(\frac{R_{+}}{c^{\nu_1}}\big)(F)~<~-\frac{m_2}{2}$, then by (\ref{4203}) and $f''>0$ we know that $F$ does not lie on $W_{-}$, and hence $\widehat{F_{+}F}$ exists.
Thus, by the second equation of (\ref{cd6}) we have
$
\bar{\partial}_{+}\big(\frac{R_{-}}{c^{\nu_1}}\big)<0
$
 at $F$. (The proof for this is also the same as (\ref{1972606}), so we omit it.)
This leads to a contradiction, since $\frac{R_{+}}{c^{\nu_1}}~<~-\frac{m_2}{2}$ along $\widehat{F_{+}F}$.
Thus, we have
$\big(\frac{R_{\pm}}{c^{\nu_1}}\big)(F)~<~-\frac{m_2}{2}$.

\vskip 4pt
{\it Step 3.}
Using the method in the third step of the proof of Lemma \ref{lem33} and $f''>0$, one can get that if $\frac{R_{\pm}}{c^{\frac{2\gamma}{\gamma-1}}}~<~-m$ and $c>0$ in $\Lambda_{1, F}^{-}\setminus\{F\}$  and $0<c(F)< c_m$, then $\frac{R_{\pm}}{c^{\frac{2\gamma}{\gamma-1}}}(F)~<~-m$.

\vskip 4pt
{\it Step 4.}
As shown in the fourth step of the proof of Lemma \ref{lem33}, we can prove that
for any point $F\in {\Sigma_{1, loc}^{-}}\setminus W_{-}$,
if
$c>0$ in $\Lambda_{1, F}^{-}\setminus\{F\}$ then $c(F)>0$.

Therefore, by the method of continuity we can get this lemma.
\end{proof}

\begin{rem}
Actually, we also have that the classical solution  of the boundary value problem (\ref{PSEU}), (\ref{bd4}) satisfies
(\ref{41903}) and (\ref{198102}).
\end{rem}
\vskip 4pt

\begin{lem}\label{lem38}
Let $$M_2=\max\limits_{x\in [0, +\infty)}\left\{\frac{ f''(x)}{(1+[f'(x)]^2)^{\frac{3}{2}}}\right\}\quad \mbox{and} \quad q_{_M}=\left(u_{in}^2(f(0))+\frac{2c_{in}^2(f(0))}{\gamma-1}\right)^{\frac{1}{2}}. $$
Then when  $\varepsilon<\varepsilon_0$ the classical solution of the boundary value problem (\ref{PSEU}), (\ref{bd4}) satisfies
$$
R_{\pm}>-M_1-(\gamma-1)q_{_M}M_2-\frac{\varepsilon c_M^{\frac{2\gamma}{\gamma-1}}}{2\gamma s_m}.
$$
\end{lem}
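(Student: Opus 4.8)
The plan is to reproduce the scheme of Lemma~\ref{34}, but with the characteristic boundaries $\widehat{PD_0},\widehat{PB_0}$ replaced by $C_+^{B_0}$ and $W_-$, and to let the curvature of the wall produce the extra term $-(\gamma-1)q_{_M}M_2$. First I would note that, by Lemma~\ref{lem36} and the remark following it, the solution on $\Sigma_1^-$ already satisfies $\frac{R_\pm}{c^{\frac{2\gamma}{\gamma-1}}}<-m$, $|\delta_1|\le\mathcal{B}\varepsilon$ and $|\delta_2|\le\varepsilon$. These are precisely the bounds exploited in Lemma~\ref{34}, so the monotonicity argument there carries over without change: substituting them into the characteristic decompositions (\ref{cd8}) and using $\varepsilon<\varepsilon_0$, the term $\tfrac{1}{2\cos^2A}R_+R_-$ (positive since $R_+,R_-<0$) dominates all the perturbation terms, whence
\[
c\bar{\partial}_{-}R_{+}>0\quad\text{and}\quad c\bar{\partial}_{+}R_{-}>0\quad\text{on }\Sigma_1^-.
\]
Thus $R_+$ increases in the $\bar{\partial}_{-}$-direction and $R_-$ increases in the $\bar{\partial}_{+}$-direction.

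Next I would propagate each bound from the boundary on which its defining characteristic originates. Through any $F\in\Sigma_1^-$ the backward $C_-$ curve meets $C_+^{B_0}$ at a point $F_-$ and the backward $C_+$ curve meets $W_-$ at a point $F_+$ (Figure~\ref{Fig5}). Since $C_+^{B_0}$ lies on $\partial\Sigma_1$, Lemma~\ref{34} yields $R_+>-M_1-\frac{\varepsilon c_M^{\frac{2\gamma}{\gamma-1}}}{2\gamma s_m}$ along $C_+^{B_0}$, and the monotonicity of $R_+$ along $C_-$ propagates this bound to every point of $\Sigma_1^-$, in particular to $W_-$. For $R_-$ I would invoke the wall relation (\ref{4203}),
\[
R_-=R_+-\frac{(\gamma-1)q f''(x)}{\big(1+[f'(x)]^2\big)^{\frac{3}{2}}}\quad\text{on }W_-,
\]
and, since $0<\frac{f''(x)}{(1+[f'(x)]^2)^{\frac{3}{2}}}\le M_2$, the problem of bounding $R_-$ below on $W_-$ reduces to bounding $q$ above on $W_-$.

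The decisive new ingredient -- and the step I expect to require the most care -- is the estimate $q\le q_{_M}$ on $W_-$. For this I would use that $W_-$ is itself a streamline: it satisfies the slip condition and connects continuously back to the inlet point $B=(0,-f(0))$ through $\Sigma_0^-$ and $\Sigma_1^-$. Hence by $\bar{\partial}_{0}\widehat{E}=0$ in (\ref{4401}) the Bernoulli quantity $\widehat{E}=\frac{q^2}{2}+\frac{c^2}{\gamma-1}$ is constant along $W_-$ and equals its value at $B$, which by the symmetry assumption (A3) and $v_{in}=0$ is $\widehat{E}(B)=\frac{u_{in}^2(f(0))}{2}+\frac{c_{in}^2(f(0))}{\gamma-1}=\tfrac12 q_{_M}^2$. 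Therefore $q^2=2\widehat{E}-\frac{2c^2}{\gamma-1}\le q_{_M}^2$ on $W_-$. Combining this with the propagated bound for $R_+$ and with (\ref{4203}) gives $R_->-M_1-(\gamma-1)q_{_M}M_2-\frac{\varepsilon c_M^{\frac{2\gamma}{\gamma-1}}}{2\gamma s_m}$ on $W_-$, and propagating along the $C_+$ curves (on which $R_-$ increases) extends it to all of $\Sigma_1^-$. Since the bound for $R_+$ is strictly stronger, both $R_+$ and $R_-$ satisfy the asserted inequality, which completes the proof.
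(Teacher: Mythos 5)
Your proposal is correct and follows essentially the same route as the paper: establish $\bar{\partial}_{-}R_{+}>0$ and $\bar{\partial}_{+}R_{-}>0$ via the decompositions (\ref{cd8}) as in Lemma \ref{34}, propagate the $R_{+}$ bound from $C_{+}^{B_0}$ along $C_{-}$ characteristics, convert it to an $R_{-}$ bound on $W_{-}$ through the wall relation (\ref{4203}) together with $q\le q_{_M}$ (obtained from the Bernoulli law (\ref{4401}) along the wall streamline), and propagate that along $C_{+}$ characteristics. Your justification of $q\le q_{_M}$ via $q^{2}=2\widehat{E}-\frac{2c^{2}}{\gamma-1}\le 2\widehat{E}(B)=q_{_M}^{2}$ is a slightly cleaner version of the paper's appeal to $\bar{\partial}_{0}c<0$, and is equally valid.
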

\begin{proof}
From (\ref{41903}) we can get
\begin{equation}\label{4205}
\bar{\partial}_{-}R_{+}>0\quad\mbox{and}\quad\bar{\partial}_{+}R_{-}>0,
\end{equation}
as shown in the proof of Lemma \ref{34}.
From Lemma \ref{34} we have
$$
R_{+} ~>~-M_1-\frac{\varepsilon c_M^{\frac{2\gamma}{\gamma-1}}}{2\gamma s_m}\quad\mbox{on}\quad C_{+}^{B_0}.
$$
Thus,  we have
\begin{equation}\label{4204}
R_{+}>-M_1-\frac{\varepsilon c_M^{\frac{2\gamma}{\gamma-1}}}{2\gamma s_m}.
\end{equation}

From (\ref{4203}) and (\ref{4204}), we have
$$
\begin{aligned}
R_{-}~=~R_{+}-\frac{(\gamma-1)q f''(x)}{\big(1+[f'(x)]^2\big)^{\frac{3}{2}}}~>~ -M_1-(\gamma-1)q_{_M}M_2-\frac{\varepsilon c_M^{\frac{2\gamma}{\gamma-1}}}{2\gamma s_m}\quad \mbox{along}\quad W_{-},
\end{aligned}
$$
since $q\leq q_{_M}$ along $W_{-}$ by (\ref{4401}) and $\bar{\partial}_{0}c<0$.
Thus, by (\ref{4205}) we have
$$
R_{-}>-M_1-(\gamma-1)q_{_M}M_2-\frac{\varepsilon c_M^{\frac{2\gamma}{\gamma-1}}}{2\gamma s_m}.
$$
We then complete the proof of this lemma.
\end{proof}

Using (\ref{4401}), (\ref{192303})--(\ref{82505}), and Lemmas \ref{lem36} and \ref{lem38}, we can establish uniform a priori $C^1$ norm estimate of the solution.
Therefore, by the local existence result and the standard continuity extension method, we can extend the local solution to a whole determinate region of the slip boundary problem; cf. \cite{CQ2,LiT}. We then have the following lemma.
\begin{lem}
The boundary value problem (\ref{PSEU}), (\ref{bd4}) admits a gloabl $C^1$ solution in a region $\Sigma_1^{-}$ as shown in Figure \ref{Domain}.
Moreover, the solution satisfies
(\ref{41903})  and (\ref{198102}).
\end{lem}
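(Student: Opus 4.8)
The plan is to run the standard continuity (continuation) method for quasilinear hyperbolic systems, exactly in the spirit of \cite{CQ2, LiT}. Local solvability of the slip boundary problem (\ref{PSEU}), (\ref{bd4}) near $C_{+}^{B_0}$ and near the corner $B_0$ is already guaranteed by the method of characteristics (the compatibility at $B_0$ being exactly the relation used in Step~1 of Lemma \ref{lem36}), so the only thing that must be supplied is a uniform a priori $C^1$ bound on every local determinate subregion $\Sigma_{1,loc}^{-}$ that is \emph{independent of the subregion}. Once such a bound is in hand, the solution cannot break down in the interior and can be continued along the characteristics issuing from $C_{+}^{B_0}$ and $W_{-}$ until it fills the whole determinate region $\Sigma_{1}^{-}$, which is one of the three configurations of Figure \ref{Domain}.

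First I would assemble the pointwise bounds already proved. Lemma \ref{lem36} furnishes the uniform negative upper bound $R_{\pm}/c^{\frac{2\gamma}{\gamma-1}}<-m$ together with $c>0$ off $W_{-}$, while Lemma \ref{lem38} furnishes the uniform lower bound $R_{\pm}>-M_1-(\gamma-1)q_{_M}M_2-\frac{\varepsilon c_M^{\frac{2\gamma}{\gamma-1}}}{2\gamma s_m}$; the accompanying estimates (\ref{41903}) give $q\geq q_m$, $0<A\leq A_M$, $c<c_M$, $|\delta_1|\leq\mathcal{B}\varepsilon$, and $|\delta_2|\leq\varepsilon$. These control every directional derivative: from the definition (\ref{1973101}) one recovers $\bar{\partial}_{\pm}c=R_{\pm}+\frac{j}{\kappa}\bar{\partial}_{\pm}s$, which is bounded because $\bar{\partial}_{\pm}s=c^{\frac{\gamma+1}{\gamma-1}}\delta_2$ and $j=\frac{c}{\gamma(\gamma-1)s}$ are bounded; next $\omega=\rho\delta_1$ is bounded, and (\ref{192303})--(\ref{82505}) express $\bar{\partial}_{\pm}u$ and $\bar{\partial}_{\pm}v$ as bounded combinations of $\bar{\partial}_{\pm}c$, $\omega\sin A$, and $\bar{\partial}_{\pm}s$. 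Converting directional into Cartesian derivatives via (\ref{72802}) and (\ref{4401}) then yields the uniform $C^1$ estimate.

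The delicate point, which I expect to be the main obstacle, is that the inversion formula (\ref{72802}) carries a factor $1/\sin 2A$ that blows up as $A\to0$, that is, exactly where $c\to0$ and the system loses strict hyperbolicity. This is precisely where the weighted quantities earn their keep: the bound $R_{\pm}/c^{\frac{2\gamma}{\gamma-1}}<-m$ forces $\bar{\partial}_{\pm}c=O\big(c^{\frac{2\gamma}{\gamma-1}}\big)$, and likewise $\bar{\partial}_{\pm}s=O\big(c^{\frac{\gamma+1}{\gamma-1}}\big)$ and $\omega\sin A=O\big(c^{\frac{2}{\gamma-1}+1}\big)$; since $\sin 2A\sim 2c/q$ with $q\geq q_m>0$, each quotient in (\ref{72802}) remains bounded, indeed tends to $0$, as $c\to0$. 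Hence the Cartesian first derivatives stay uniformly bounded up to the vacuum, and, because Lemma \ref{lem36} guarantees $c>0$ strictly in $\Sigma_{1,loc}^{-}\setminus W_{-}$, the eigenvalues $\lambda_{\pm}$ stay separated in the interior and the characteristic method does not degenerate there.

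With the uniform $C^1$ estimate and interior strict hyperbolicity in place, the continuation proceeds in the usual way: the maximal determinate region on which the $C^1$ solution exists is relatively open, and a subregion-independent bound precludes any finite breakdown, so the solution extends to all of $\Sigma_{1}^{-}$; in the unbounded configurations of Figure \ref{Domain}(4)--(5) the same uniform bounds let the continuation run indefinitely along $C_{+}^{B_0}$ and $W_{-}$. Finally, the advertised estimates (\ref{41903}) and (\ref{198102}) hold throughout $\Sigma_{1}^{-}$ because they were established on every $\Sigma_{1,loc}^{-}$ by Lemmas \ref{lem36} and \ref{lem38} and are preserved in the limit.
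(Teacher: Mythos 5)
Your proposal is correct and follows essentially the same route as the paper, which likewise combines the a priori bounds of Lemmas \ref{lem36} and \ref{lem38} with the relations (\ref{4401}), (\ref{192303})--(\ref{82505}) to get a uniform $C^1$ estimate and then invokes the standard continuity extension method of \cite{CQ2,LiT}. Your additional observation that the weighted bounds $R_{\pm}/c^{\frac{2\gamma}{\gamma-1}}<-m$ neutralize the $1/\sin 2A$ degeneracy in (\ref{72802}) near vacuum is a correct and welcome elaboration of a point the paper leaves implicit.
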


By symmetry we can construct a solution in region $\Sigma_1^{+}$, as shown in Figure \ref{Domain}.

\subsection{Global solution of the boundary value problem  (\ref{PSEU}), (\ref{bd1})}
By repeatedly solving Goursat problems and slip boundary problems, we can get the solution
in regions $\Sigma_{0}$, $\Sigma_{0}^{+}$, $\Sigma_{0}^{-}$, $\Sigma_{1}$, $\Sigma_{1}^{+}$, $\Sigma_{1}^{-}$, $\Sigma_{2}$, $\Sigma_{2}^{+}$, $\Sigma_{2}^{-}$, $\cdot\cdot\cdot$, as illustrated in Figure \ref{Fig2}. Moreover, the solution satisfies
(\ref{41903}).
A natural question is whether the region $\Sigma$ can be covered by the determinant regions of these Goursat problems and slip boundary problems.
We are going to show that the flow in $\Sigma$ can be obtained after solving a finite number of Gourst problems and slip boundary problems.


By a direct computation, we have
$$
\sin^2 A=\frac{c^2}{2\Big(\hat{E}-\frac{c^2}{\gamma-1}\Big)}
<\frac{c^2}{2\big(\frac{\widehat{E}_0}{2}-\frac{c^2}{\gamma-1}\big)},
$$
since $\hat{E}>\frac{\widehat{E}_0}{2}$.
So, there exists a
$$0<c_g< \frac{q_m\arctan f'(x_{_{B_0}})}{9(\kappa-1)},$$ such that if $c<c_{g}$ then
\begin{equation}\label{41302}
(1+\kappa)\Big(\frac{\kappa-1}{\kappa+1}\cos^2 A-\sin^2A\Big)\frac{m}{2} -\frac{\widehat{\mathcal{M}}\varepsilon}{2}>0
\end{equation}
and
\begin{equation}\label{198301}
 A<\min\left\{\frac{\arctan f'(x_{_{B_0}})}{3},~ \frac{\pi}{3}\right\}.
\end{equation}
Here, $x_{_{B_0}}$ is the abscissa of the point $B_0$.

\begin{figure}[htbp]
\begin{center}
\includegraphics[scale=0.41]{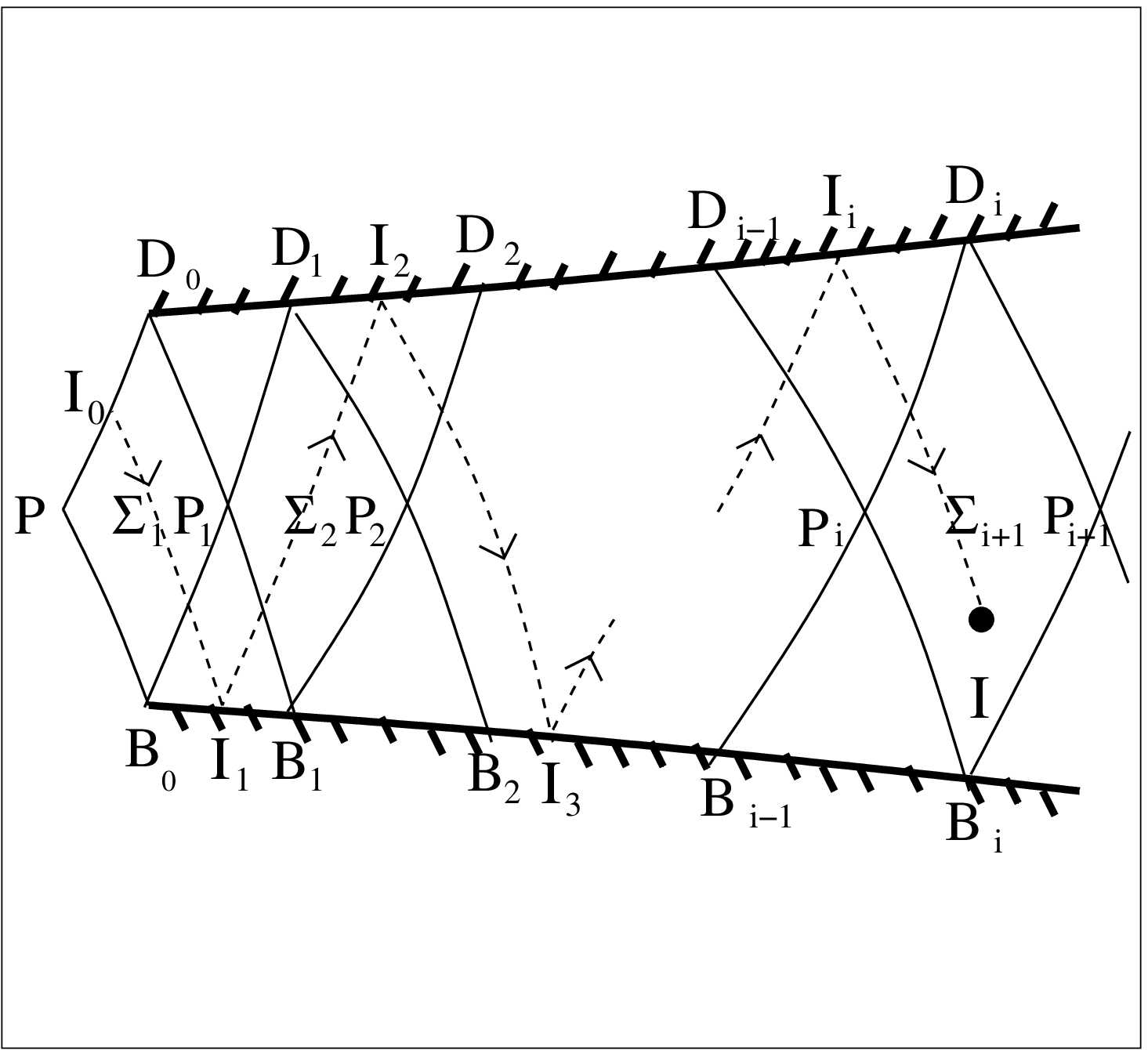}\quad\includegraphics[scale=0.29]{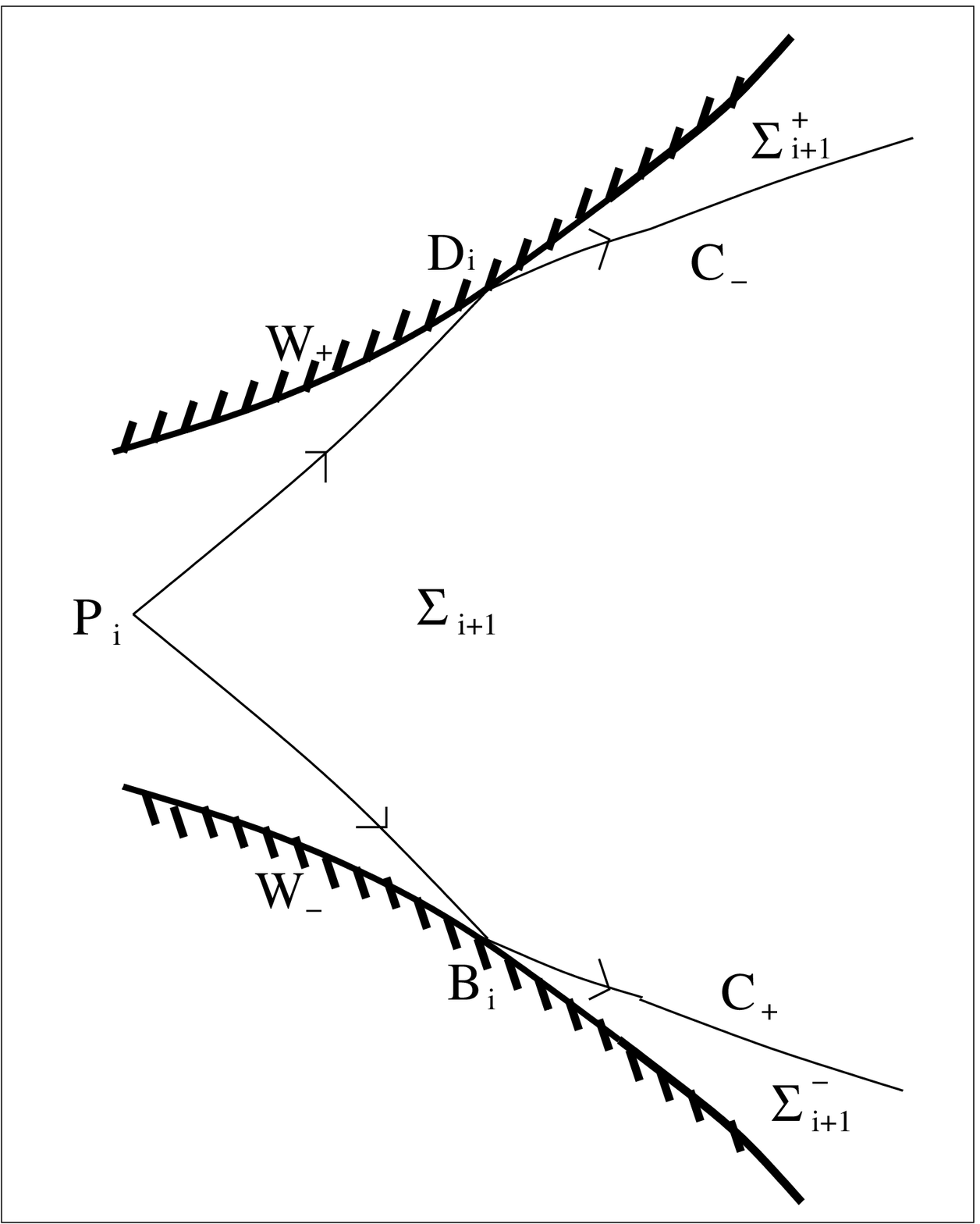}\quad \includegraphics[scale=0.365]{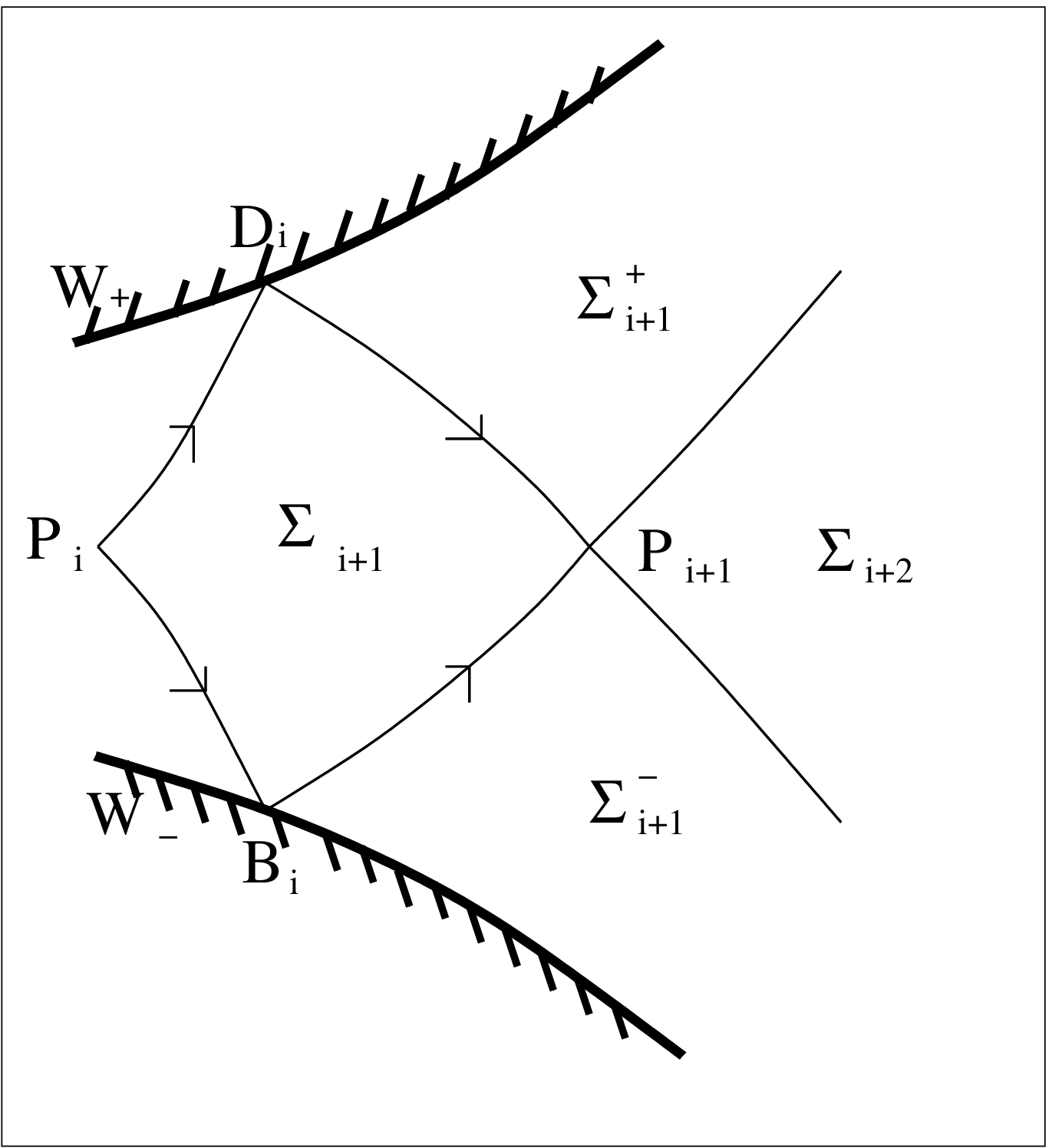}
\caption{ \footnotesize $C_{\pm}$ characteristic curves in the duct.}
\label{Fig6}
\end{center}
\end{figure}
 Suppose that
there are infinite  Goursat regions $\Sigma_i$ ($i=1, 2, 3, \cdot\cdot\cdot$).
Then, for each $i\geq 1$, $\Sigma_i$ is bounded by characteristic curves $\widehat{P_{i-1}B_{i-1}}$, $\widehat{P_{i-1}D_{i-1}}$, $\widehat{B_{i-1}P_{i}}$, and $\widehat{D_{i-1}P_{i}}$ as indicated in Figure \ref{Fig6} (left), where $P_0=P$.
Meanwhile, for any point $I\in \Sigma_{i+1}$, there is a $I_0\in \widehat{PD_0}\cup\widehat{PB_0}$, such that the forward $C_{+}$ or $C_{-}$ characteristic curve issuing from $I_0$ can reach $I$ after $i$ reflections on the walls.
Thus, by $\frac{\bar{\partial}_{\pm}c}{c^{\frac{2\gamma}{\gamma-1}}}<-\frac{m}{2}$ we know that there exists a sufficiently large $i\geq 0$, such that
$
c<c_{g}$ in $\Sigma_{i+1}$.
Consequently, we have that when  $\varepsilon<\varepsilon_0$ there holds
\begin{equation}\label{41401}
\bar{\partial}_{+}\alpha>0\quad \mbox{in}
\quad \Sigma_{i+1},
\end{equation}
as shown in (\ref{41301}).
Meanwhile, from (\ref{192307}), (\ref{41302}), and (\ref{198301}) we also have that when  $\varepsilon<\varepsilon_0$ there holds
\begin{equation}\label{41402}
\begin{aligned}
\bar{\partial}_{+}\alpha&~<~c^{\frac{2\gamma}{\gamma-1}-1}\tan A\left\{-(\kappa-1)\cos^2 A\frac{\bar{\partial}_{+}c }{c^{\frac{2\gamma}{\gamma-1}}} -\frac{\delta_1}{q^2(\gamma s)^{\frac{1}{\gamma-1}}}
+\frac{\delta_2\cos 2A}{\gamma(\gamma-1)s }\right\}\\&~<~\frac{c^{\frac{2\gamma}{\gamma-1}}}{q\cos A}\left\{-(\kappa-1)\cos^2 A\frac{\bar{\partial}_{+}c }{c^{\frac{2\gamma}{\gamma-1}}} +\frac{\widehat{\mathcal{M}}\varepsilon}{2}\right\}\\&~<~\frac{c^{\frac{2\gamma}{\gamma-1}}}{q\cos A}\left\{-(\kappa-1)\cos^2 A\frac{\bar{\partial}_{+}c }{c^{\frac{2\gamma}{\gamma-1}}} +\frac{(\kappa-1)m}{4}\right\}
\\&~<~\frac{c^{\frac{2\gamma}{\gamma-1}}}{q\cos A}\left\{-(\kappa-1)\cos^2 A\frac{\bar{\partial}_{+}c }{c^{\frac{2\gamma}{\gamma-1}}} -\frac{(\kappa-1)}{2}\frac{\bar{\partial}_{+}c }{c^{\frac{2\gamma}{\gamma-1}}}\right\}\\&
~<~-\frac{3(\kappa-1)}{2q\cos A}\bar{\partial}_{+}c ~<~-\frac{3(\kappa-1)}{q_m}\bar{\partial}_{+}c
\qquad \mbox{in}
\quad \Sigma_{i+1}.
\end{aligned}
\end{equation}
 Hence, along the forward $C_{+}$ characteristic curve passing through $B_{i}$ we have
$$
\begin{aligned}
\alpha&~<~\alpha(B_{i})+\frac{3(\kappa-1)}{q_m}\big(c(B_{i})-c\big)~<~\alpha(B_{i})+\frac{\arctan f'(x_{_{B_0}})}{3}\\&\qquad\quad~=~
-\arctan f'(x_{_{B_{i}}})+A(B_{i})+\frac{\arctan f'(x_{_{B_0}})}{3}~<~-\frac{\arctan f'(x_{_{B_0}})}{3}~<~0.
\end{aligned}
$$
Therefore, by symmetry we know that the forward $C_{+}$ characteristic curve issuing from $B_{i}$ and the forward $C_{-}$ characteristic curve issuing from $D_{i}$ do not intersect with each other; see Figure \ref{Fig6}(mid).
This implies that $\Sigma_{i+2}$ does not exist. This leads to a contradiction.




Therefore,
there are only the following two cases:
\begin{itemize}
  \item There exists an $i\geq 0$, such that the forward $C_{+}$ characteristic curve issuing from $B_{i}$ does not intersect with the forward $C_{-}$ characteristic curve issuing from $D_{i}$, as indicated Figure \ref{Fig6}(mid).
In this case, $$\Sigma=\Big(\bigcup\limits_{j=0}^{i+1}\Sigma_{j}\Big)\cup\Big(\bigcup\limits_{j=0}^{i+1}\Sigma_{j}^{+}\Big)
\cup\Big(\bigcup\limits_{j=0}^{i+1}\Sigma_{j}^{-}\Big).$$
  \item There exists an $i\geq 0$, such that the forward $C_{+}$ characteristic curve through $P_{i+1}$ does not intersect with $W_{+}$, and the forward $C_{-}$ characteristic curve through $P_{i+1}$ does not intersect with $W_{-}$ as indicated in Figure \ref{Fig6}(right).
In this case, $$\Sigma=\Big(\bigcup\limits_{j=0}^{i+2}\Sigma_{j}\Big)\cup\Big(\bigcup\limits_{j=0}^{i+1}\Sigma_{j}^{+}\Big)
\cup\Big(\bigcup\limits_{j=0}^{i+1}\Sigma_{j}^{-}\Big).$$
\end{itemize}
Therefore, we obtain a global piecewise smooth solution in the duct by solving a finite number of Gourst problems and slip boundary problems.

\subsection{Vacuum regions adjacent to the walls}
The purpose of this subsection is to discuss the appearance of vacuum.

From (\ref{42201}), we have
\begin{equation}
\begin{aligned}
\bar{\partial}_{0}c~=~&
\frac{1}{2\cos A}\big(\bar{\partial}_{-}c+\bar{\partial}_{+}c\big)
\\~=~&\frac{1}{2\cos A}\big(\bar{\partial}_{-}c-\bar{\partial}_{+}c+2\bar{\partial}_{+}c\big)\\~=~& \frac{1}{2\cos A}\Big(\frac{2q \bar{\partial}_{0}\sigma}{\kappa}-\frac{2j\bar{\partial}_{+}s}{\kappa}+2\bar{\partial}_{+}c\Big)
\\~=~& \frac{1}{2\cos A}\Big(\frac{2q \bar{\partial}_{0}\sigma}{\kappa}+2R_{+}\Big)
~<~\frac{q}{\kappa\cos A}\bar{\partial}_{0}\sigma~\quad \mbox{along}\quad W_{-}.
\end{aligned}
\end{equation}
Since $\bar{\partial}_{0}\widehat{E}=0$ and $\bar{\partial}_{0}\sigma\leq 0$ along $W_{-}$, we have
\begin{equation}
\bar{\partial}_{0}c~<~\frac{u_{in}\big(f(0)\big)}{\kappa}\bar{\partial}_{0}\sigma~\quad \mbox{along}\quad W_{-}.
\end{equation}

So, by integration we know that if $$\arctan f_{\infty}'>\kappa \frac{ c_{in}\big(f(0)\big)}{u_{in}\big(f(0)\big)}$$ then vacuum will appear on the wall $W_{-}$. That is, there is a $x_{_V}>0$ such that $c(x_{_V}, -f(x_{_V}))=0$. 

\begin{figure}[htbp]
\begin{center}
\includegraphics[scale=0.34]{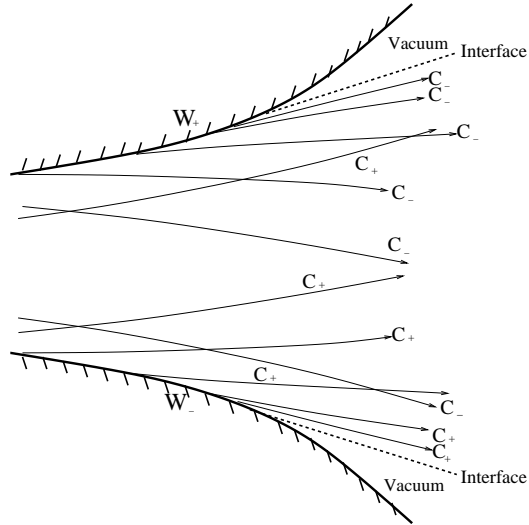}
\caption{ \footnotesize Vacuum regions adjacent to the walls.}
\label{Fig7}
\end{center}
\end{figure}

In what follows, we are going to show that if there is a vacuum then the vacuum is always adjacent to one of the walls, and the interface between gas and vacuum must be straight.

For $\tilde{x}<x_{_V}$, we denote by $y=y(x; \tilde{x})$, $x>\tilde{x}$ the $C_{+}$ characteristic curve issuing from the point $(\tilde{x}, -f(\tilde{x}))$. As shown in (\ref{41401}) and (\ref{41402}), we can prove that when $\tilde{x}$ is sufficiently close $x_{_V}$,
$$
c<c_{g}\quad\mbox{and}\quad 0<\bar{\partial}_{+}\alpha<-\frac{3(\kappa-1)}{q_m}\bar{\partial}_{+}c
\quad \mbox{along}\quad  y=y(x; \tilde{x}),~ x>\tilde{x}.
$$
Thus, we have
$$
\arctan y'(x; \tilde{x})-\arctan y'(\tilde{x}; \tilde{x})~<~ \frac{3(\kappa-1)}{q_m}c(\tilde{x}, -f(\tilde{x}))\quad \mbox{as}\quad x>\tilde{x}.
$$
In addition,
 since $c(\tilde{x}, -f(\tilde{x}))\rightarrow 0$
and $$\arctan y'(\tilde{x}; \tilde{x})=-\arctan f'(\tilde{x})+A(\tilde{x}, -f(\tilde{x}))\rightarrow -\arctan f'(x_{_V})$$
as $\tilde{x}\rightarrow x_{_V}$, we have that for any $X>x_{_V}$,
$$\lim\limits_{\tilde{x}\rightarrow x_{_V}} \Big\|y(x; \tilde{x})+f(x_{_V})+f'(x_{_V})(x-x_{_V})\Big\|_{C[x_{_V}, X]}=0.$$
Therefore, there are no gas flow into the region $\big\{(x,y)\mid -f(x)<y<-f(x_{_V})-f'(x_{_V})(x-x_{_V}), ~x>x_{_V}\big\}.$
By symmetry we also have that there are also no gas flow into the region $\big\{(x,y)\mid f(x_{_V})+f'(x_{_V})(x-x_{_V})<y<f(x), ~x>x_{_V}\big\}.$
See Figure \ref{Fig7}.

From Lemmas \ref{lem33} and \ref{lem36} we can see that $c>0$ in
$
\big\{(x, y)\mid -g(x)~<~y~<~g(x),~  x>0\big\},
$
where
$$
g(x)=\left\{
       \begin{array}{ll}
         f(x), & \hbox{$0<x<x_{_V}$;} \\[4pt]
         f(x_{_V})+f'(x_{_V})(x-x_{_V}), & \hbox{$x\geq x_{_V}$.}
       \end{array}
     \right.
$$

Since $\varepsilon\rightarrow 0$ as $\epsilon\rightarrow 0$,  we complete the proof of Theorem \ref{main}.




\vskip 32pt
\small


\begin{thebibliography}{aa}
\bibitem{CQ1} {\sc S. X. Chen and A. F. Qu},
{\it Interaction of rarefaction waves in jet stream}, J. Differential Equations, 248 (2010) 2931--2954.

 \bibitem{CQ2} {\sc S. X. Chen and A. F. Qu}, {\it Interaction of rarefaction waves and vacuum in a convex duct}, Arch. Ration. Mech. Anal., 213 (2014) 423--446.

\bibitem{Chen1} {\sc X. Chen and Y. X. Zheng},
{\it The interaction of rarefaction waves of the two-dimensional Euler equations}, Indiana Univ. Math. J., 59 (2010) 231--256.

\bibitem{CF} {\sc R. Courant and K. O. Friedrichs}, {\it Supersonic Flow and Shock
Waves}, Interscience, New York, 1948.

\bibitem{Lai5} {\sc G. Lai}, {\it Interaction of fan-jump-fan composite waves in a two-dimensional steady jet for van der Waals gases}, J. Hyperbolic  Differ. Equ., 14 (2017) 73--134.

\bibitem{Lai6} {\sc G. Lai}, {\it Interaction of composite waves of the two-dimensional full Euler equations for van der Waals gases}, SIAM J. Math. Anal., 50 (2018) 3535--3597.

\bibitem{Li1} {\sc J. Q. Li, Z. C. Yang, and Y. X. Zheng}, {\it Characteristic decompositions and interactions of rarefaction waves of 2-D Euler equations},
J. Differential Equations, 250 (2011) 782--798.

 \bibitem{Li-Zhang-Zheng} {\sc J. Q. Li, T. Zhang, and Y. X. Zheng}, {\it Simple waves and a characteristic decomposition of the
 two dimensional compressible Euler equations},
Comm. Math. Phys., 267 (2006) 1--12.

 \bibitem{Li2} {\sc J. Q. Li and Y. X. Zheng},
 {\it Interaction of rarefaction waves of the two-dimensional self-similar Euler equations},
 Arch. Ration. Mech. Anal., 193 (2009) 623--657.

 \bibitem{Li3} {\sc J. Q. Li and Y. X. Zheng}, {\it Interaction of Four Rarefaction Waves in the Bi-Symmetric Class of the Two-Dimensional Euler Equations}, Comm. Math. Phys., 296 (2010) 303--321.

 \bibitem{Li4} {\sc M. J. Li and Y. X. Zheng}, {\it Semi-hyperbolic patches of solutions of the two-dimensional Euler equations}, Arch. Ration. Mech. Anal., 201 (2011) 1069--1096.

\bibitem{LiT} {\sc T. T. Li},
{\it Global classical solutions for quasilinear hyperbolic system}, John Wiley and Sons, 1994.

\bibitem{Li-Yu} {\sc T. T. Li and W. C. Yu},
{\it Boundary value problem for quasilinear hyperbolic systems},
Duke University, 1985.

\bibitem{WX1}{\sc C. P. Wang and Z. P. Xin}, {\it Global smooth supersonic flows in infinite expanding nozzles}, SIAM J. Math. Anal., 47 (2015) 3151--3211.

\bibitem{WX2}{\sc C. P. Wang and Z. P. Xin}, {\it Smooth transonic flows of Meyer type in De Laval Nozzles}, Arch. Ration. Mech. Anal., 232 (2019) 1597--1647.
\end{thebibliography}
\end{document}